\documentclass[11pt]{article}
\usepackage{amsmath,amssymb,amsthm}
\newcommand{\R}{\mathbb R}
\newcommand{\N}{\mathbb N}
\newcommand{\Nzero}{\mathbb N_0}
\newcommand{\gra}{\operatorname{gra}}
\newcommand{\dom}{\operatorname{dom}}
\newcommand{\sgn}{\operatorname{sgn}}
\newcommand{\pair}[2]{\langle #1,#2\rangle}
\newtheorem{theorem}{Theorem}
\newtheorem{proposition}{Proposition}
\newtheorem{lemma}{Lemma}
\newtheorem{example}{Example}
\newtheorem{corollary}{Corollary}
\theoremstyle{definition}
\newtheorem{definition}{Definition}
\newtheorem{assumption}{Assumption}
\theoremstyle{remark}
\usepackage{url}
\newtheorem{remark}{Remark}
\newcommand{\dist}{\operatorname{dist}}
\makeatletter
\def\bstctlcite{\@ifnextchar[{\@bstctlcite}{\@bstctlcite[@auxout]}}
\def\@bstctlcite[#1]#2{\@bsphack
  \@for\@citeb:=#2\do{%
    \edef\@citeb{\expandafter\@firstofone\@citeb}%
    \if@filesw
      \immediate\write\csname #1\endcsname{\string\citation{\@citeb}}%
    \fi}%
  \@esphack}
\makeatother

\title{Nonmaximal sums of maximally monotone operators under Rockafellar's constraint qualification}

\author{Weifeng Yang\thanks{\texttt{Email:ywf841673182@gmail.com}}}
\date{}
\begin{document}
\bstctlcite{IEEEexample:BSTcontrol}
\maketitle
% \footnotetext{\texttt{Email:ywf841673182@gmail.com}}

\begin{abstract}

% We construct counterexamples to Rockafellar's sum conjecture in which two maximally monotone operators satisfy the interior-domain condition but their sum is not maximally monotone, thereby providing the complete disproof of the conjecture. We establish a general construction theorem that computes the entire monotone polar of a class of graphs, gives a necessary and sufficient condition for their maximal monotonicity, and shows how a positive rank-one perturbation yields a nonmaximal sum under this condition. We verify the theorem's hypotheses and its maximality criterion on $c_0$, thereby obtaining a counterexample to the conjecture. Furthermore, we construct a bounded linear surjection from $\ell^1$ onto $c_0$ and use it to obtain the counterexample on $\ell^1$. 
% More importantly, this general construction theorem provides a systematic mechanism for generating entire families of counterexamples under the same interior-domain condition. Specifically, these families of counterexamples exist on every Banach space that either contains a closed subspace isomorphic to $c_0$ or $\ell^1$ or admits one of these spaces as a quotient. This general construction theorem also yields examples on $c_0$ with normal cones to closed balls of any prescribed positive radius and examples with second operators having full domain and bounded range.  

We construct counterexamples to Rockafellar's sum conjecture in which two maximally monotone operators satisfy the interior-domain condition but their sum is not maximally monotone, thereby providing the complete disproof of the conjecture. We establish a general construction theorem that computes the entire monotone polar of a class of graphs and characterizes their maximal monotonicity by the nonexistence of solutions to explicit equations in the continuous dual. We also prove a pullback theorem that transfers counterexamples through bounded linear surjections. These theorems provide a systematic mechanism for generating entire families of counterexamples and lead to further structural consequences for the resulting operators. 
Specifically, we obtain four classes of counterexample families: weighted constructions with different curves, first operators with prescribed affine value dimensions, second operators obtained by positive rescaling and norm-continuous monotone perturbation with full domain, and counterexamples on further Banach spaces. The last class yields counterexamples on every Banach space containing a closed subspace isomorphic to $c_0$ or $\ell^1$, or admitting such a quotient. We also give explicit constructions on $c_0$ and standard $\ell^1$ that realize the construction and pullback mechanisms, respectively. 
We further determine the domain geometry and exact radial bounds of the constructed operators, characterize reflexivity by a fixed rank-one test in two classical classes of Banach spaces, identify maximal monotone extensions under surjective pullback, and compute exact Fitzpatrick identities. The appendices further extend these constructions to additional parameter and product families, nonlinear scalar and strictly monotone second operators, normal-cone and subdifferential partners, and examples with prescribed radial bounds, and more counterexample families.

\end{abstract}

\noindent\textbf{Keywords:} maximally monotone operator, sum theorem,
nonreflexive Banach space, monotone polar, rank-one operator.

\setcounter{tocdepth}{2}
\tableofcontents
\clearpage

%%介绍部分还得改下，介绍一下为解决这个问题而诞生的思想与工具

\section{Introduction}
\label{sec:introduction}
In this paper, we disprove Rockafellar's sum conjecture. 
Let $X$ be a real Banach space, $X^*$ be its continuous dual, and $A,B:X\rightrightarrows X^*$ be maximally monotone operators. Their pointwise sum is defined by $(A+B)x=\{a+b:a\in Ax,\ b\in Bx\}$. This pointwise sum is monotone, but maximal monotonicity is not automatic.  
In 1970, Rockafellar \cite{rockafellar1970} proved that $A+B$ is maximally monotone when $X$ is reflexive and
\begin{equation}
\dom A\cap\operatorname{int}(\dom B)\ne\varnothing,
\label{eq:original-cq}
\end{equation}
where the interior is taken in the norm topology of $X$. 
The question of whether the reflexivity assumption can be removed while retaining the classical constraint qualification in Eq. (\ref{eq:original-cq}) is known as Rockafellar's sum problem, or Rockafellar's sum conjecture. This conjecture has remained open for more than five decades and has been described as both the most important and the most famous open problem in monotone operator theory \cite{yao2010,borweinyao2012}.

% Rockafellar's sum conjecture asks whether Eq. (\ref{eq:original-cq}) still guarantees that $A+B$ is maximally monotone when $X$ is an arbitrary real Banach space. 

To address Rockafellar's sum conjecture beyond reflexive spaces, a major line of research is to identify additional structural assumptions on the operators under which the sum theorem remains valid. 
For example, Yao \cite{yao2010} proves maximality when $A$ is of type~(FPV) and $B$ has full domain. Borwein and Yao \cite{borweinyao2012} prove maximality under Eq. (\ref{eq:original-cq}) when $A$ is a linear relation. Voisei \cite{voisei2010} treats the sum of an operator of type~(FPV) with convex domain and a normal-cone operator. Verona and Verona \cite{veronaverona2022} prove the sum theorem under Eq. (\ref{eq:original-cq}) when $A$ is $\mathcal C_0$-maximal, meaning that  $A+N_C$ is maximally monotone for every closed convex set $C$ with $\dom A\cap\operatorname{int}C\ne\varnothing$, where $N_C$ denotes the normal-cone operator of $C$. 
Furthermore, Bo\c{t}, Bueno, and Simons \cite{botbuenosimons2016} recall that a positive answer to Rockafellar's sum problem would imply that every maximally monotone operator is of type~(FPV).

A complementary line of research approaches this sum conjecture through convex representations of monotone operators, allowing maximal monotonicity and sum theorems to be studied by using convex analysis. First, Fitzpatrick \cite{fitzpatrick1988} represents maximally monotone operators by convex functions on $X\times X^*$, and Burachik and Svaiter \cite{burachiksvaiter2002} study the associated family of convex representatives through enlargements. Then, within this framework, Marques Alves and Svaiter \cite{marquesalvessvaiter2008br} give sufficient conditions on a convex function and its conjugate for the represented operator to be maximally monotone in a nonreflexive Banach space. Voisei and Z\u{a}linescu \cite{voiseizalinescu2009} study the resulting class of strongly representable operators and develop its calculus rules. Using convex representations, Penot \cite{penot2004} obtains results for compositions and sums in reflexive spaces, and Marques Alves and Svaiter \cite{marquessvaiter2012} establish a sum theorem in general Banach spaces under a qualification condition involving the domains of Fitzpatrick representations. However, despite these advances, Rockafellar's sum conjecture remained unresolved.

In this paper, we give a general construction theorem that yields the first complete disproof of Rockafellar's sum conjecture, thereby resolving the problem after more than five decades. 
Specifically, our construction theorem computes the entire monotone polar of the proposed graphs and characterizes their maximality by the nonexistence of solutions to equations in the continuous dual. When this criterion holds, it supplies a maximally monotone first operator whose sum with a specified positive rank-one operator is not maximally monotone. We apply this criterion using weighted triangular maps and curves whose values lie in the range of the associated block map while their limits lie outside that range.  This framework gives three classes of counterexample families: varying the block weights and curves, prescribing the affine dimensions of the first operator's values, and varying the second operator while keeping the first fixed. In the third class, every positive coefficient of the rank-one second operator gives a nonmaximal sum, and adding any norm-continuous monotone map with full domain to that second operator preserves the failure.

We also prove a pullback theorem that transfers any counterexample satisfying Eq. (\ref{eq:original-cq}) through a bounded linear surjection, preserving the maximality of both operators, the interior-domain condition and nonmaximality of their sum. Together with extension from closed subspaces, it gives a fourth class of counterexample families on further Banach spaces. We employ the construction theorem explicitly on $c_0$ and then use the pullback theorem to obtain a counterexample on standard $\ell^1$.  We also derive structural consequences for the resulting operators, including exact domain and radial formulas, a reflexivity criterion in two classes of spaces, a correspondence between the maximal extensions of a graph and those of its pullback, together with exact Fitzpatrick identities. The appendices develop additional parameter, second-operator and product families, prescribe radial bounds, and describe how the geometry and representation formulas behave under these constructions and transfers, and more counterexample families.

\subsection{Contributions}
The contributions of this paper are as follows. 

(1) We establish a general construction theorem for constructing counterexamples to Rockafellar's sum conjecture (Theorem~\ref{thm:endpoint-criterion}). Under Assumption~\ref{ass:construction}, the theorem computes the entire monotone polar of the graph in Eq. (\ref{eq:abs-full-graph}) and characterizes its maximality by the criterion in Eq. (\ref{eq:abs-endpoint-exclusion}). When this criterion holds, the graph defines a maximally monotone operator whose sum with an everywhere-defined positive rank-one maximally monotone operator is not maximally monotone, although the two operators satisfy Eq. (\ref{eq:original-cq}). 

(2) We prove a pullback theorem that transfers maximal monotonicity and counterexamples through bounded linear surjections (Theorem~\ref{lem:surjection-transport}). It preserves the maximality of both operators, the interior-domain condition and nonmaximality of their sum, and explicitly transfers a point witnessing the latter failure. This theorem applies to any counterexample satisfying Eq. (\ref{eq:original-cq}). 

(3) We construct four classes of counterexample families: weighted maps with different curves, first operators with prescribed affine value dimensions, second operators obtained by positive rescaling and norm-continuous monotone perturbation with full domain, and families on further Banach spaces (Propositions~\ref{cor:family-weighted-endpoints}--\ref{cor:space-transfer}). Explicit choices yield the example on $c_0$, and the pullback theorem then gives the example on standard $\ell^1$. The space-transfer result applies to every Banach space containing a closed subspace or admitting a quotient isomorphic to $c_0$ or $\ell^1$. The appendices give additional parameter and product families, scalar and strictly monotone second operators, families using convex constraints and subdifferentials, and examples with prescribed radial bounds. 

(4) We derive structural consequences for maximal monotonicity and the constructed operators (Section~\ref{sec:corollaries}). We determine the weighted families' domain closures, boundary estimates, meagreness of the domains and exact radial bounds. We also characterize reflexivity by a rank-one test for Banach lattices and spaces with an unconditional Schauder basis, identify all maximal monotone extensions under surjective pullback, and compute the exact difference between a Fitzpatrick value of the sum and the corresponding minimum over dual decompositions. Further results in the appendices characterize actual convex hulls, distinguish extension of the auxiliary curve from extension of the operator graph, and give Fitzpatrick and quotient-norm formulas under pullback. 

The paper is organized as follows. Section~\ref{sec:definitions} introduces the notation and basic definitions. Section~\ref{sec:endpoint-criterion} establishes the construction theorem and the pullback theorem. Section~\ref{sec:counterexamples} develops four classes of counterexample families by varying block weights and curves, prescribing affine value dimensions, changing the second operator and transferring the constructions to further Banach spaces. It also gives explicit examples on $c_0$ and standard $\ell^1$. Section~\ref{sec:corollaries} derives four structural consequences concerning domain geometry, reflexivity in two classes of Banach spaces, maximal extensions and Fitzpatrick functions. Section~\ref{sec:conclusion} concludes the paper. Appendix~\ref{app:parameters} gives further parameter families and describes the geometry and representations of the first operators. Appendix~\ref{app:partners} constructs scalar, strictly monotone, normal-cone and subdifferential second operators giving nonmaximal sums. Appendix~\ref{app:pullback} develops families obtained from inverse-image constraints, products and changes of coordinates, and computes a dual quotient norm. Appendix~\ref{app:fitzpatrick} gives Fitzpatrick formulas under pullback, constructs examples with prescribed radial bounds, and studies finite graph averages.

\section{Preliminaries}
\label{sec:definitions}
Let $\N=\{1,2,\ldots\}$ and $\N_0=\{0,1,\ldots\}$. Let $X$ be a real Banach space, we denote its continuous dual by $X^*$. We define $\pair{x}{a}=a(x)$ for $x\in X$ and $a\in X^*$. For a real Hilbert space $H$, its inner product is denoted by $(u,v)_H$. We use the norm topology for interiors and closures unless stated otherwise. 

For a graph $G\subset X\times X^*$, we also define $\dom G=\{x:\exists a,\ (x,a)\in G\}$ and $\operatorname{ran}G=\{a:\exists x,\ (x,a)\in G\}$. 
For a bounded linear map $P:X\to X^*$, we call $P$ positive if $\pair{x}{Px}\ge0$ for every $x\in X$, and rank one if $\dim(\operatorname{ran}P)=1$. 

Next, we introduce monotonicity and the monotone polar  \cite{martinezsvaiter2005,bauschkemclarensendov2006} as follows. 

\begin{definition}[Monotonicity and the monotone polar]
\label{D1}
For a graph $G\subset X\times X^*$, define
\begin{equation}
G^\mu=\{(z,p)\in X\times X^*:
 \pair{z-x}{p-a}\ge0\quad\forall(x,a)\in G\}.
\label{eq:D1}
\end{equation}
A point $(z,p)$ is monotonically related to $G$ if $(z,p)\in G^\mu$.
The graph is monotone when $\pair{x-y}{a-b}\ge0$ for all $(x,a),(y,b)\in G$, and the graph is maximally monotone if it is monotone and there is no monotone graph $\widetilde G\subset X\times X^*$ such that $G\subsetneq\widetilde G$. 
\end{definition}

By Eq. (\ref{eq:D1}), $(0,0)\in G^\mu$ if and only if $\pair{x}{a}\ge0$ for every $(x,a)\in G$. Furthermore, from Definition \ref{D1}, we can prove the following proposition, which shows that maximal monotonicity can be characterized directly by the monotone polar.

\begin{proposition}
A monotone graph $G\subset X\times X^*$ is maximally monotone if and only if $G=G^\mu$.
\end{proposition}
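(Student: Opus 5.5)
The plan is to prove both implications directly from Definition~\ref{D1}. The only preliminary fact needed is that every monotone graph is contained in its polar: if $G$ is monotone and $(z,p)\in G$, then $\pair{z-x}{p-a}\ge0$ for all $(x,a)\in G$, so $(z,p)\in G^\mu$. Hence $G\subset G^\mu$ always holds for monotone $G$, and the proposition reduces to showing that maximality is equivalent to the reverse inclusion $G^\mu\subset G$.

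For the forward direction, I will assume $G$ is maximally monotone and take $(z,p)\in G^\mu$. Set $\widetilde G=G\cup\{(z,p)\}$. To check that $\widetilde G$ is monotone, it suffices to look at pairs of points in $\widetilde G$. Pairs lying inside $G$ are handled by the monotonicity of $G$. A pair involving $(z,p)$ and a point of $G$ satisfies the required inequality by the definition of $G^\mu$. The pair $(z,p)$ with itself gives $\pair{0}{0}=0\ge0$. Since $G\subset\widetilde G$ and $\widetilde G$ is monotone, maximality forces $\widetilde G=G$, so $(z,p)\in G$. This gives $G^\mu\subset G$, and therefore $G=G^\mu$.

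For the converse, I will assume $G=G^\mu$ and let $\widetilde G\supset G$ be any monotone graph. Take $(z,p)\in\widetilde G$. Because $\widetilde G$ is monotone and contains $G$, we get $\pair{z-x}{p-a}\ge0$ for every $(x,a)\in G$. By definition this means $(z,p)\in G^\mu=G$. Hence $\widetilde G\subset G$, so no monotone graph strictly contains $G$, and $G$ is maximally monotone.

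I do not expect a real obstacle here. The only step that needs care is verifying that the enlarged graph $G\cup\{(z,p)\}$ is monotone, and in particular treating the pair of the new point with itself, which is trivial.
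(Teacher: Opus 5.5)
Your proposal is correct and follows the same approach as the paper. The paper's one-line proof also argues that a point of $G^\mu\setminus G$ can be adjoined to $G$ and that every point of a monotone extension of $G$ lies in $G^\mu$; you have simply written out those two steps in full, including the inclusion $G\subset G^\mu$ and the self-pair check.
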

\begin{proof}
A point in $G^\mu\setminus G$ can be adjoined to $G$, and every point of a monotone extension belongs to $G^\mu$.
\end{proof}

Next, we introduce a quantity that determines whether there exists a constant $C\ge0$ such that $\pair{x}{a}\ge-C\|x\|$ for every $(x,a)\in G$. 

\begin{definition}[Radial bound at the origin]
\label{def:radial-bound}
For a nonempty graph $G$ with $0\notin\dom G$, define
\begin{equation}
\mathcal V(G)=\sup_{(x,a)\in G}
 \frac{\max\{0,-\pair{x}{a}\}}{\|x\|}.
\label{eq:D3}
\end{equation}
We say that $G$ has a finite radial bound at the origin if $\mathcal V(G)<\infty$.
\end{definition}
For every $C\ge0$, Eq. (\ref{eq:D3}) gives that 
\[
\mathcal V(G)\le C
\quad\Longleftrightarrow\quad
\pair{x}{a}\ge-C\|x\|
\quad\text{for all }(x,a)\in G.
\]
This means $\mathcal V(G)<\infty$ if and only if there exists $C\ge0$ such that $\pair{x}{a}\ge-C\|x\|$ for every $(x,a)\in G$, and $\mathcal V(G)$ is the smallest such constant in this case.

\section{A general construction for nonmaximal sums and a pullback theorem}
\label{sec:endpoint-criterion}

In this section, we establish two theorems for constructing and transferring counterexamples satisfying Eq. (\ref{eq:original-cq}). The construction theorem computes the entire monotone polar of a class of graphs and characterizes their maximality by the nonexistence of solutions to equations in the continuous dual. Under this criterion, it supplies a maximally monotone first operator and a positive rank-one second operator with full domain whose sum is not maximally monotone. The pullback theorem transfers any counterexample satisfying Eq. (\ref{eq:original-cq}) through a bounded linear surjection, preserving both operators' maximality, the interior-domain condition and nonmaximality of their sum. Together, these results provide a criterion for choosing the construction data and a means of obtaining counterexamples on further Banach spaces.

First, we introduce the graph used in our construction. The construction consists of a Hilbert-valued profile, a scalar parameter, and a linear map from the dual space to the primal space.

\begin{definition}[The auxiliary Hilbert space]
\label{def:hilbert-spaces-norms}
Let $H_0$ be a real Hilbert space with inner product $(\cdot,\cdot)_{H_0}$ and induced norm
\[
\|u\|_{H_0}=\sqrt{(u,u)_{H_0}}\qquad(u\in H_0).
\]
Define the auxiliary Hilbert space $H=\R\oplus H_0$ to be $\R\times H_0$ equipped with the inner product
\[
((s,u),(t,v))_H=st+(u,v)_{H_0}
\qquad(s,t\in\R,\ u,v\in H_0).
\]
Its induced norm is
\[
\|(s,u)\|_H=\sqrt{s^2+\|u\|_{H_0}^2}.
\]
\end{definition}

\begin{definition}[Auxiliary structure]
\label{def:construction-maps}
Let $E:X^*\to X$ and $M:X^*\to H$ be bounded linear maps, and let
$g\in X^*\setminus\{0\}$, we define 
\[
h=Eg
\]
and define the scalar functional
\[
r(a)=\pair{h}{a}.
\]
We further define
\[
La=-Ea+r(a)h,
\qquad
K=\ker M\cap\ker r.
\]
\end{definition}

% We use $r(a)$ as a scalar parameter and constrain $Ma$ to lie on a prescribed curve at this parameter, thereby obtaining the graph used in our construction. 

We use $r(a)$ as a scalar parameter and constrain $Ma$ to equal the value of a prescribed curve at $r(a)$, thereby obtaining the graph used in our construction.

\begin{definition}[The constrained graph]
\label{def:construction-graph}
Let $\omega:(0,\infty)\to H_0$ be $\ell$-Lipschitz, where $0\le\ell\le1$. We define 
\[
\begin{aligned}
J&=(-\infty,-1)\cup(1,\infty),\\
F(t)&=(\sgn t,\omega(|t|-1))\qquad(t\in J), 
\end{aligned}
\]
where \[
\operatorname{sgn}(t)=
\begin{cases}
1, & t>0,\\
0, & t=0,\\
-1, & t<0.
\end{cases}
\] 
Thus $F:J\to H$ defines a two-branch curve in the auxiliary Hilbert
space $H$ (Definition \ref{def:hilbert-spaces-norms}). We call $a\in X^*$ admissible if
\[
r(a)\in J
\qquad\text{and}\qquad
Ma=F(r(a)).
\]
Let
\begin{equation}
\begin{aligned}
D&=\{a\in X^*:r(a)\in J,\ Ma=F(r(a))\},\\
G&=\{(La,a):a\in D\}.
\end{aligned}
\label{eq:abs-full-graph}
\end{equation}
Thus $D$ consists of the dual variables whose coordinates
$(r(a),Ma)$ lie on the prescribed curve, and $G$ is obtained by
lifting each $a\in D$ to the point $(La,a)\in X\times X^*$.

% and obviously, $F:J\to H$ is a curve in the auxiliary Hilbert space $H$ (Definition \ref{def:hilbert-spaces-norms}). The admissible dual variables and the graph are
% \begin{equation}
% \begin{aligned}
% D&=\{a\in X^*:r(a)\in J,\ Ma=F(r(a))\},\\
% G&=\{(La,a):a\in D\}.
% \end{aligned}
% \label{eq:abs-full-graph}
% \end{equation}

\end{definition}

Next, we introduce the following assumptions, which provide the structure needed to compute the monotone polar $G^\mu$ explicitly and characterize the maximality of $G$.

\begin{assumption}\label{ass:construction}
For the maps and graph in Definitions~\ref{def:construction-maps} and~\ref{def:construction-graph}, assume:
\begin{enumerate}
\renewcommand{\theenumi}{H\arabic{enumi}}
\renewcommand{\labelenumi}{(\theenumi)}
\item\label{ass:bilinear} $Mg=0$, and
\begin{equation}
\pair{Ea}{b}+\pair{Eb}{a}=2(Ma,Mb)_H
\qquad(a,b\in X^*).
\label{eq:abs-bilinear}
\end{equation}

\item\label{ass:annihilator} The subspace $K$ satisfies
\begin{equation}
\{x\in X:\pair{x}{k}=0\ \text{for all }k\in K\}=\R h.
\label{eq:abs-annihilator}
\end{equation}

\item\label{ass:curve-limit} There exists $\eta\in H_0$ such that
\begin{equation}
\begin{gathered}
\omega(s)\longrightarrow\eta\text{ in }H_0
\quad\text{as }s\downarrow0,\\
0<S:=\|\eta\|_{H_0}^2<1,\\
\|\omega(s)\|_{H_0}^2\le S\qquad(s>0).
\end{gathered}
\label{eq:abs-tail}
\end{equation}

\item\label{ass:realization} For every $t\in J$, there exists $a^t\in X^*$
satisfying $r(a^t)=t$ and $Ma^t=F(t)$.
\end{enumerate}
\end{assumption}
% By Assumption~\ref{ass:construction}\textup{(\ref{ass:realization})}, each parameter fibre
% $\{a\in X^*:r(a)=t,\ Ma=F(t)\}$ is nonempty. Since $K=\ker M\cap\ker r$, this fibre is exactly $a^t+K$. Together with the other conditions in Assumption~\ref{ass:construction}, these affine fibres reduce the polar inequalities to the scalar parameter $r(a)$ and the Hilbert-valued profile $Ma$.

Therefore, under Assumption~\ref{ass:construction}, we establish the following theorem, which gives a general construction mechanism for counterexamples satisfying the interior-domain condition in Eq. (\ref{eq:original-cq}). Specifically, this theorem computes the entire monotone polar $G^\mu$ and characterizes the maximal monotonicity of $G$ by the condition that no $(p,\tau)\in X^*\times[-1,1]$ satisfies $r(p)=\tau$ and $Mp=(\tau,\eta)$.  This system characterizes exactly the points $(Lp,p)$ outside $G$ that can be adjoined to $G$ while preserving monotonicity. Consequently, when this system has no solution, the operator with graph $G$ is maximally monotone. By combining this maximality criterion with the positive rank-one perturbation $\mathcal P x=\pair{x}{g}g$, this theorem provides a general mechanism for constructing pairs of maximally monotone operators that satisfy Eq. (\ref{eq:original-cq}) while their sum is not maximally monotone.

\begin{theorem}[Construction of nonmaximal sums]
\label{thm:endpoint-criterion}
For the graph $G$ in Definition~\ref{def:construction-graph}, suppose Assumption~\ref{ass:construction} holds and define
\[
\widehat F(t)=
\begin{cases}
F(t),&|t|>1,\\
(t,\eta),&|t|\le1.
\end{cases}
\]
Then the following statements hold.
\begin{enumerate}
\renewcommand{\theenumi}{\roman{enumi}}
\renewcommand{\labelenumi}{(\theenumi)}
\item\label{thm:polar-part} The entire monotone polar of $G$ in $X\times X^*$ is
\begin{equation}
G^\mu=\{(Lp,p):p\in X^*,\ Mp=\widehat F(r(p))\}.
\label{eq:abs-entire-polar}
\end{equation}
Moreover, this graph is the unique maximally monotone extension of $G$.

\item\label{thm:maximality-part} The graph $G$ is maximally monotone if and only if
\begin{equation}
\nexists(p,\tau)\in X^*\times[-1,1]:
\quad r(p)=\tau,\quad Mp=(\tau,\eta).
\label{eq:abs-endpoint-exclusion}
\end{equation}

\item\label{thm:sum-part} Suppose Eq. (\ref{eq:abs-endpoint-exclusion}) holds.
Let $\mathcal A$ be the operator with graph $G$, and define $\mathcal P x=\pair{x}{g}g$. Then
\[
0\notin\dom\mathcal A,\qquad \mathcal V(G)\le S\|g\|.
\]
Both $\mathcal A$ and $\mathcal P$ are maximally monotone and satisfy
\[
\dom\mathcal A\cap\operatorname{int}(\dom\mathcal P)\ne\varnothing.
\]
Their sum is not maximally monotone. More precisely,
\[
(0,0)\in
\bigl(\gra(\mathcal A+\mathcal P)\bigr)^\mu
\setminus\gra(\mathcal A+\mathcal P).
\]
\end{enumerate}
\end{theorem}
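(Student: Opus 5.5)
The plan is to reduce every polar inequality to a scalar–Hilbert inequality in the coordinates $(r(\cdot),M\cdot)$. Two identities from (H1) do most of the work. First, put $a=b=c$ in Eq.~(\ref{eq:abs-bilinear}) and use $\pair{h}{c}=r(c)$. This gives, for every $c\in X^*$,
\[
\pair{Lc}{c}=-\pair{Ec}{c}+r(c)^2=r(c)^2-\|Mc\|_H^2 .
\]
Second, take $b=g$ and use $Mg=0$. This gives $\pair{Ea}{g}=-r(a)$ and $\pair{h}{g}=\pair{Eg}{g}=\|Mg\|^2=0$, hence $\pair{La}{g}=r(a)$.

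\textbf{Monotonicity of $G$ and of the candidate polar.} For $a,b\in D$ with $r(a)=t$ and $r(b)=s$, the first identity gives
\[
\pair{La-Lb}{a-b}=(t-s)^2-(\sgn t-\sgn s)^2-\|\omega(|t|-1)-\omega(|s|-1)\|^2 .
\]
If $t$ and $s$ have the same sign, this is nonnegative because $\omega$ is $\ell$-Lipschitz with $\ell\le1$. If they have opposite signs, then $(t-s)^2=(|t|+|s|)^2\ge4+(|t|-1+|s|-1)^2$. Moreover $\|\omega(u)-\omega(v)\|\le u+v$, which follows by passing through the limit $\eta$ from (H3). So $G$ is monotone. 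The same computation with $\widehat F$ in place of $F$ should show that the right-hand side of Eq.~(\ref{eq:abs-entire-polar}), call it $\widehat G$, is monotone. On $[-1,1]$ the curve $\widehat F$ is the segment $t\mapsto(t,\eta)$, so the mixed terms reduce to $(t-s)^2-(t-\sgn s)^2-\|\eta-\omega(|s|-1)\|^2$, and these are handled by the same Lipschitz bound. Continuity of $\widehat F$ at $\pm1$ gives $\widehat G\subset \overline{G}\subset G^\mu$.

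\textbf{Computing $G^\mu$.} This is the main step. Let $(z,p)\in G^\mu$. By (H4), the fibre $\{a:r(a)=t,\ Ma=F(t)\}$ equals $a^t+K$. For $k\in K$ we have $Lk=-Ek$ and $\pair{Ek}{k}=\|Mk\|^2=0$, so the polar inequality against $(L(a^t+k),a^t+k)$ is affine in $k$. An affine function bounded below on a subspace must have zero linear part. Using $\pair{Ek}{b}=-\pair{Eb}{k}$ for $k\in K$ (again from (H1)), this linear part is $-\pair{z-La^t+E(p-a^t)}{k}$, which therefore vanishes for all $k\in K$. By (H2), $z+Ep\in\R h$; that is, $z=Lp+\mu h$ for some $\mu\in\R$.

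Write $\tau=r(p)$ and $Mp=(\sigma,v)$. The polar condition then becomes the scalar family
\[
(\tau-t)^2+\mu(\tau-t)\ \ge\ (\sigma-\sgn t)^2+\|v-\omega(|t|-1)\|^2\qquad(t\in J).
\]
From this family I must extract $\mu=0$ and $(\sigma,v)=\widehat F(\tau)$. I would first try the following.
\begin{itemize}
\item Let $t\to\pm1$ and $t\to\pm\infty$ in the inequality.
\item Expand around $t=\tau$ when $|\tau|>1$. There the left side vanishes to first order, which forces $v=\omega(|\tau|-1)$, $\sigma=\sgn\tau$ and $\mu=0$.
\item When $|\tau|\le1$, use the two endpoint inequalities at $t\to1^+$ and $t\to-1^-$ together with the Lipschitz bound $\|\omega(s)-\eta\|\le s$ and the condition $S<1$ from (H3). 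These should force $\sigma=\tau$, $v=\eta$ and $\mu=0$.
\end{itemize}
I expect this case analysis, especially ruling out $\mu\ne0$ for $|\tau|\le 1$, to be the main obstacle.

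Once $G^\mu=\widehat G$ is established, $\widehat G$ is monotone and satisfies $G\subset\widehat G=G^\mu$. Every monotone extension of $G$ lies in $G^\mu$, so $\widehat G$ is the unique maximal monotone extension, which proves (i). Then (ii) is immediate: $G=G^\mu$ exactly when no $p$ has $r(p)=\tau\in[-1,1]$ and $Mp=(\tau,\eta)$.

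\textbf{Part (iii).} Under Eq.~(\ref{eq:abs-endpoint-exclusion}), $\mathcal A$ is maximally monotone by (ii). The operator $\mathcal P$ is linear, continuous, positive and everywhere defined, hence maximally monotone, and $\dom\mathcal A\cap\operatorname{int}(\dom\mathcal P)=\dom\mathcal A\neq\varnothing$ by (H4). For $a\in D$ we have $\pair{La}{g}=r(a)$ with $|r(a)|>1$, so $\|La\|\ge1/\|g\|$. In particular $0\notin\dom\mathcal A$. Also $\pair{La}{a}=r(a)^2-1-\|\omega\|^2\ge-S$, and combining this with $\|La\|\ge1/\|g\|$ gives $\pair{La}{a}\ge -S\|g\|\,\|La\|$. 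Hence $\mathcal V(G)\le S\|g\|$.

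The graph of $\mathcal A+\mathcal P$ consists of the points $(La,a+r(a)g)$ with $a\in D$. For these,
\[
\pair{La}{a+r(a)g}=2r(a)^2-1-\|\omega(|r(a)|-1)\|^2\ge 1-S>0,
\]
so $(0,0)$ lies in the polar of this graph. Since $0\notin\dom(\mathcal A+\mathcal P)$, the point $(0,0)$ is not in the graph, so the sum is not maximally monotone.
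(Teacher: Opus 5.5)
\textbf{Gap: the case $|r(p)|\le 1$ of the polar computation is not carried out.} Your reduction matches the paper's: testing on the fibres $a^t+K$ and using (H2) gives $z=Lp+\mu h$, and your scalar family of inequalities is correct. Your case $|r(p)|>1$ is also fine. The gap is the step you flag yourself, and it is the heart of part (i), hence of (ii) and of the maximality of $\mathcal A$ in (iii). You never show that the family forces $\mu=0$ and $Mp=\widehat F(r(p))$ when $|r(p)|\le1$, and the tool you propose cannot do it. Writing $\tau=r(p)$ and $Mp=(\sigma,v)$, the limits $t\to1^+$ and $t\to-1^-$ give
\[
(1-\tau)^2-\mu(1-\tau)\ge(\sigma-1)^2+\|v-\eta\|^2,\qquad
(1+\tau)^2+\mu(1+\tau)\ge(\sigma+1)^2+\|v-\eta\|^2 .
\]
The choice $\tau=1$, $\mu=2$, $(\sigma,v)=(1,\eta)$ satisfies both, so they cannot rule out $\mu\ne0$. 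Neither the Lipschitz bound nor $S<1$ changes this; $S<1$ plays no role in (i).

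\textbf{The paper's fix.} Complete the square:
\[
(\tau-t)^2+\mu(\tau-t)=(t-\tau')^2-\nu^2,\qquad \tau'=\tau+\tfrac{\mu}{2},\quad \nu=\tfrac{\mu}{2},
\]
and split cases on $|\tau'|$ instead of $|r(p)|$. If $|\tau'|>1$, testing at $t=\tau'\in J$ gives $-\nu^2\ge\|F(\tau')-Mp\|^2$, so $\nu=0$ and $Mp=F(\tau')$. If $|\tau'|\le1$, multiply the two endpoint limits (written with $\tau'$, $\nu$) by $\frac{1+\tau'}{2}$ and $\frac{1-\tau'}{2}$ and add. This gives $(\sigma-\tau')^2+\|v-\eta\|^2+\nu^2\le0$, which forces $\nu=0$ and $Mp=(\tau',\eta)$ at once.

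\textbf{Repairing your own split.} It needs one more ingredient than you list.
\begin{itemize}
\item For $|\tau|\le1$, the weights $\frac{1\pm\tau}{2}$ cancel the $\mu$-terms and give $Mp=(\tau,\eta)$.
\item Substituting back yields $\mu(1+\tau)\ge0\ge\mu(1-\tau)$, so $\mu=0$ when $|\tau|<1$.
\item At $\tau=\pm1$ you must use the non-limit tests $t=\pm(1+s)$. These give $s^2\mp\mu s\ge0$ for all $s>0$, which together with the sign condition above forces $\mu=0$.
\end{itemize}

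\textbf{A false intermediate claim.} The assertion $\widehat G\subset\overline G$ does not hold in general. Since $|r|>1$ on $D$ and $r=\pair{h}{\cdot}$ is norm and weak$^*$ continuous, any point of $\widehat G$ with $|r(p)|<1$ lies outside every such closure of $G$. You do not need this claim. Because $\widehat G$ is monotone and contains $G$, you already get $\widehat G\subseteq G^\mu$, which is the paper's argument.

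The rest is correct and follows the paper: monotonicity of $G$ and $\widehat G$, the deduction of (ii), and part (iii).
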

\begin{proof}
\noindent\textit{\textup{(\ref{thm:polar-part})}}
Let $a=b=g$ in Eq. (\ref{eq:abs-bilinear}), from $Mg=0$, we know that $r(g)=0$, thus we have 
\begin{equation}
\begin{aligned}
\pair{Ea}{g}&=-r(a),\\
\pair{La}{g}&=r(a),\\
\pair{La-Lb}{a-b}&=|r(a)-r(b)|^2-\|Ma-Mb\|^2.
\end{aligned}
\label{eq:abs-pairing}
\end{equation}
Taking $t=2$ in Assumption~\ref{ass:construction}\textup{(\ref{ass:realization})}, we can obtain $r(a^2)=2$, thus $h\ne0$.

Next, to establish the monotonicity properties needed to compute $G^\mu$, we first prove that $\widehat F$ is $1$-Lipschitz on $\R$. 
We extend $\omega$ to $[0,\infty)$ by setting $\omega(0)=\eta$, then let $\chi(t)=\max\{-1,\min\{1,t\}\}$ and $\zeta(t)=\max\{|t|-1,0\}$, thus, when $t$ and $u$ have the same sign, we have 
\[
|\chi(t)-\chi(u)|+|\zeta(t)-\zeta(u)|=|t-u|.
\]
For $t\ge0\ge u$, we obtain
\[
\begin{aligned}
|\chi(t)-\chi(u)|+|\zeta(t)-\zeta(u)|
&\le \chi(t)-\chi(u)+\zeta(t)+\zeta(u)\\
&=t-u.
\end{aligned}
\]
Interchanging $t,u$ covers the remaining case, thus
\[
|\chi(t)-\chi(u)|+|\zeta(t)-\zeta(u)|\le|t-u|.
\]
From the above formulations and $\widehat F(t)=(\chi(t),\omega(\zeta(t)))$, we can obtain 
\[
\|\widehat F(t)-\widehat F(u)\|^2
\le|\chi(t)-\chi(u)|^2+\ell^2|\zeta(t)-\zeta(u)|^2
\le|t-u|^2.
\]
Therefore, $\widehat F$ is $1$-Lipschitz on $\R$. Let 
\[
\widetilde G
=
\{(Lp,p):p\in X^*,\ Mp=\widehat F(r(p))\}.
\]
For any $(Lp,p),(Lq,q)\in\widetilde G$, Eq. (\ref{eq:abs-pairing}) and the $1$-Lipschitz continuity of $\widehat F$ give
\[
\pair{Lp-Lq}{p-q}
=
|r(p)-r(q)|^2-\|Mp-Mq\|^2
\ge0.
\]
Thus $\widetilde G$ is monotone. Moreover, $G\subseteq\widetilde G$ since $Ma=F(r(a))=\widehat F(r(a))$ for every $a\in D$, we know that every point of $\widetilde G$ is monotonically related to $G$, this implies
\[
\widetilde G\subseteq G^\mu.
\]

Next, we prove the reverse inclusion. Let $(x,p)\in G^\mu$, for $a\in D$ and $t=r(a)$, Eq. (\ref{eq:abs-bilinear}) gives
\begin{align}
\pair{x-La}{p-a}
={}&\pair{x}{p}-\pair{x+Ep}{a}
       +2(Ma,Mp)_H-t r(p)+t^2-\|Ma\|^2. \label{eq1}
\end{align}
For $k\in K$ and $\theta\in\R$, we have
$r(a+\theta k)=r(a)$ and $M(a+\theta k)=Ma$, thus $a+\theta k\in D$.
Therefore, from $(x,p)\in G^\mu$, substituting $a+\theta k$ into Eq. (\ref{eq1}), we have
\[
0\le
\pair{x-La}{p-a}
-\theta\pair{x+Ep}{k}
\qquad(\theta\in\R).
\]
This implies $\pair{x+Ep}{k}=0$ for every $k\in K$.
Consequently, from Eq. (\ref{eq:abs-annihilator}), we have
\begin{center}
$x=-Ep+vh$ for some $v\in\R$.    
\end{center}

Then we rewrite the condition $(x,p)\in G^\mu$ in terms of the scalar parameter $t$. 
By Eq. (\ref{eq:abs-bilinear}) with $a=b=p$, we have $\pair{Ep}{p}=\|Mp\|_H^2$.  Therefore, define $\tau=\frac{v+r(p)}2,~\qquad \nu=\frac{v-r(p)}2$, from $x=-Ep+vh$, $Ma=F(t)$, and $\pair{h}{a}=t$, Eq. (\ref{eq1}) becomes 
\begin{equation}
\pair{x-La}{p-a}
=(t-\tau)^2-\nu^2-\|F(t)-Mp\|_H^2.
\label{eq:abs-polar-square}
\end{equation}
Since $(x,p)\in G^\mu$, we have 
\[
\pair{x-La}{p-a}\ge0
\qquad\text{for every }a\in D.
\]
Assumption~\ref{ass:construction}\textup{(\ref{ass:realization})} ensures that every $t\in J$ is realized by some $a^t\in D$. Therefore, from Eq. (\ref{eq:abs-polar-square}) and the above formulation, we have 
\begin{equation}
(t-\tau)^2-\nu^2-\|F(t)-Mp\|_H^2\ge0
\qquad\text{for every }t\in J.
\label{eq:abs-polar-square1}
\end{equation}
Next, we distinguish the cases $|\tau|>1$ and $|\tau|\le1$ in Eq. (\ref{eq:abs-polar-square1}).

\emph{Case 1: $|\tau|>1$.}  Since $\tau\in J$, we can take $t=\tau$ in Eq. (\ref{eq:abs-polar-square1}), which gives $\nu=0$ and $Mp=F(\tau)$. Thus $v=r(p)=\tau$ and $x=Lp$.

\emph{Case 2: $|\tau|\le1$.} In Eq. (\ref{eq:abs-polar-square1}), let $Mp=(b,z)\in H$, and $t\to1^+$ as well as $t\to-1^-$, thus from Assumption~\ref{ass:construction}\textup{(\ref{ass:curve-limit})}, we have 
\[
\begin{aligned}
(b-1)^2+\|z-\eta\|^2+\nu^2&\le(1-\tau)^2,\\
(b+1)^2+\|z-\eta\|^2+\nu^2&\le(1+\tau)^2.
\end{aligned}
\]
Multiplying these two inequalities by the nonnegative weights $\frac{1+\tau}{2}$ and $\frac{1-\tau}{2}$, and adding these two inequalities, we can obtain
\[
(b-\tau)^2+\|z-\eta\|^2+\nu^2\le0.
\]
This obviously means $b=\tau$, $z=\eta$, and $\nu=0$, thus $Mp=(\tau,\eta)$, $r(p)=v=\tau$, and $x=Lp$. 
In both cases, $x=Lp$ and $Mp=\widehat F(r(p))$, this clearly means $(x,p)\in\widetilde G$, thereby proving $G^\mu\subseteq\widetilde G$. Therefore, together with $\widetilde G\subseteq G^\mu$ proved above, Eq. (\ref{eq:abs-entire-polar}) is true.

Moreover, since $\widetilde G$ is monotone and Eq. (\ref{eq:abs-entire-polar}) gives $G^\mu=\widetilde G$, the graph $G^\mu$ is monotone. Since $G^\mu$ is monotone and $G\subseteq G^\mu$, we know that $G^\mu\subseteq (G^\mu)^\mu\subseteq G^\mu$, which means $(G^\mu)^\mu=G^\mu$, thus $G^\mu$ is maximally monotone. 
If $\widehat G$ is any monotone extension of $G$, then 
\[
G\subseteq\widehat G\subseteq G^\mu.
\]
This means that if $\widehat G$ is maximally monotone, the monotonicity of $G^\mu$ implies $\widehat G=G^\mu$. Therefore, $G^\mu$ is the unique maximally monotone extension of $G$.

\noindent\textit{\textup{(\ref{thm:maximality-part})}}
Since $G$ is monotone, $G$ is maximally monotone if and only if $G=G^\mu$. Therefore, by Eq. (\ref{eq:abs-entire-polar}) and Definition~\ref{def:construction-graph}, we have 
\[
G^\mu\setminus G
=
\{(Lp,p):p\in X^*,\ r(p)\in[-1,1],\ Mp=(r(p),\eta)\}.
\]
From the above formulation, it is easy to see that $G$ is maximally monotone if and only if there is no $p\in X^*$ such that
\[
r(p)\in[-1,1]
\qquad\text{and}\qquad
Mp=(r(p),\eta).
\]
This means that there is no $(p,\tau)\in X^*\times[-1,1]$ satisfying 
$r(p)=\tau$ and $Mp=(\tau,\eta)$, i.e.,  Eq. (\ref{eq:abs-endpoint-exclusion}).

\noindent\textit{\textup{(\ref{thm:sum-part})}}
Assume Eq. (\ref{eq:abs-endpoint-exclusion}) holds, by
part~\textup{(\ref{thm:maximality-part})}, the operator $\mathcal A$
with graph $G$ is maximally monotone.

For $a\in D$, Eq. (\ref{eq:abs-pairing}) and $Ma=F(r(a))$ give
\[
\begin{aligned}
|\pair{La}{g}|&=|r(a)|>1,\\
\pair{La}{a}
&=r(a)^2-\|Ma\|_H^2\\
&=r(a)^2-1-\|\omega(|r(a)|-1)\|_{H_0}^2>-S.
\end{aligned}
\]
Since
\[
|\pair{La}{g}|\le \|La\|\|g\|,
\]
we also have
\[
\|La\|>\frac{1}{\|g\|}.
\]
Therefore, $0\notin\dom\mathcal A$, and for every $a\in D$,
\[
\frac{\max\{0,-\pair{La}{a}\}}{\|La\|}
\le
\frac{S}{\|La\|}
<
S\|g\|.
\]
Taking the supremum, from Definition \ref{def:radial-bound}, we have
\[
\mathcal V(G)\le S\|g\|.
\]
Next, we define
\[
\mathcal Px=\pair{x}{g}g.
\]
The map $\mathcal P$ is bounded, linear, positive, and defined on all
of $X$. Since $g\ne0$ and
\[
\operatorname{ran}\mathcal P=\R g,
\]
it is nonzero and rank one.

Then, we prove that $\mathcal P$ is maximally monotone. Its monotonicity
follows from
\[
\pair{x-y}{\mathcal Px-\mathcal Py}
=
\pair{x-y}{g}^2\ge0.
\]
Let $(z,z^*)\in(\gra\mathcal P)^\mu$. Testing the polar inequality
against $(z+tv,\mathcal P(z+tv))\in\gra\mathcal P$ gives
\[
-t\pair{v}{z^*-\mathcal Pz}
+t^2\pair{v}{g}^2\ge0
\qquad(t\in\R,\ v\in X).
\]
Letting $t\to0$ through positive and negative values yields
\[
\pair{v}{z^*-\mathcal Pz}=0
\qquad(v\in X),
\]
which means $z^*=\mathcal Pz$. 
Thus $(\gra\mathcal P)^\mu=\gra\mathcal P$, which means $\mathcal P$ is maximally
monotone. 

Assumption~\ref{ass:construction}\textup{(\ref{ass:realization})}
with $t=2$ gives $a^2\in D$, which means 
$La^2\in\dom\mathcal A$. Since $\dom\mathcal P=X$, we have 
\[
\dom\mathcal A\cap\operatorname{int}(\dom\mathcal P)
=
\dom\mathcal A\ne\varnothing.
\]

Finally, for $a\in D$, Eq. (\ref{eq:abs-pairing}) gives
\[
\mathcal PLa
=
\pair{La}{g}g
=
r(a)g.
\]
Therefore, we infer 
\[
\gra(\mathcal A+\mathcal P)
=
\{(La,a+r(a)g):a\in D\}.
\]
For every such point,
\[
\begin{aligned}
\pair{La}{a+\mathcal PLa}
&=\pair{La}{a}+r(a)\pair{La}{g}\\
&=2r(a)^2-1-\|\omega(|r(a)|-1)\|_{H_0}^2\\
&>1-S>0.
\end{aligned}
\]
Therefore,
\[
(0,0)\in
\bigl(\gra(\mathcal A+\mathcal P)\bigr)^\mu.
\]
Moreover,
\[
\dom(\mathcal A+\mathcal P)
=
\dom\mathcal A,
\]
thus $0\notin\dom(\mathcal A+\mathcal P)$ and consequently
\[
(0,0)\notin\gra(\mathcal A+\mathcal P).
\]
This means
\[
(0,0)\in
\bigl(\gra(\mathcal A+\mathcal P)\bigr)^\mu
\setminus
\gra(\mathcal A+\mathcal P),
\]
and $\mathcal A+\mathcal P$ is not maximally monotone.
\end{proof}

The construction theorem yields a counterexample whenever its hypotheses and maximality criterion are satisfied. The next theorem allows this pair to be transferred to any Banach space admitting a bounded linear surjection onto the construction space.

Next, we pull back both operators through a bounded linear surjection $Q:U\to V$. The theorem preserves maximal monotonicity of each operator, the interior-domain condition and nonmaximality of their sum, thereby extending the counterexample to the source space $U$. The adjoint identity preserves the inequalities witnessing sum failure, and injectivity of $Q^*$ preserves nonmembership in the sum graph. The preimage bound in the statement follows from the open mapping theorem.

\begin{theorem}[Pullback of maximal monotonicity and counterexamples]
\label{lem:surjection-transport}
Let $U,V$ be real Banach spaces and let $Q:U\to V$ be a bounded linear surjection. Suppose $C_Q>0$ and every $y\in V$ has a preimage $u\in U$ satisfying
\[
\|u\|\le C_Q\|y\|.
\]
For $\mathcal M:V\rightrightarrows V^*$, define
\[
\gra(Q^*\mathcal M Q)
=
\{(u,Q^*a):(Qu,a)\in\gra\mathcal M\},
\]
where $Q^*:V^*\to U^*$ is the adjoint of $Q$, satisfying
\[
\pair{u}{Q^*a}=\pair{Qu}{a}
\qquad(u\in U,\ a\in V^*).
\]
Then:
\begin{enumerate}
\renewcommand{\theenumi}{\roman{enumi}}
\renewcommand{\labelenumi}{(\theenumi)}
\item\label{thm:pullback-maximality} If $\mathcal M$ is maximally monotone in $V\times V^*$, then $Q^*\mathcal M Q$ is maximally monotone in $U\times U^*$.
\item\label{thm:pullback-counterexamples} Let $A,B:V\rightrightarrows V^*$ be maximally monotone operators satisfying
\[
\dom A\cap\operatorname{int}(\dom B)\ne\varnothing,
\]
and suppose $A+B$ is not maximally monotone. Then $A_Q=Q^*AQ$ and $B_Q=Q^*BQ$ are maximally monotone, satisfy
\[
\dom A_Q\cap\operatorname{int}(\dom B_Q)\ne\varnothing,
\]
and have a nonmaximal sum. More precisely, for every
\[
(z,p)\in\bigl(\gra(A+B)\bigr)^\mu\setminus\gra(A+B)
\]
and every $u\in U$ with $Qu=z$,
\begin{equation}
(u,Q^*p)\in\bigl(\gra(A_Q+B_Q)\bigr)^\mu\setminus\gra(A_Q+B_Q).
\label{eq:pullback-witness}
\end{equation}
\end{enumerate}
\end{theorem}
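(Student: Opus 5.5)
Part (i) rests on two facts. The adjoint $Q^*$ is injective because $Q$ is surjective. The polar of the pulled-back graph can be read off in $V\times V^*$.

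\emph{Monotonicity.} For $(u,Q^*a),(w,Q^*b)\in\gra(Q^*\mathcal MQ)$ we have
\[
\pair{u-w}{Q^*a-Q^*b}=\pair{Qu-Qw}{a-b}\ge0.
\]

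\emph{Maximality.} Take $(u,\xi)\in(\gra Q^*\mathcal MQ)^\mu$. First I show that $\xi$ vanishes on $\ker Q$. For $k\in\ker Q$ and $(w,Q^*b)$ in the graph, $(w+\theta k,Q^*b)$ also lies in the graph for every $\theta\in\R$. The polar inequality then reads $\pair{u-w}{\xi-Q^*b}-\theta\pair{k}{\xi}\ge0$ for all $\theta$. Since $\pair{k}{Q^*b}=0$, this forces $\pair{k}{\xi}=0$. The graph is nonempty because $\mathcal M$ is maximally monotone, so such a test point exists.

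\emph{Factoring through $Q$.} Since $\xi$ vanishes on $\ker Q$, it factors as $\xi=p\circ Q$ for a linear functional $p$ on $V$. To see that $p$ is bounded, I use the preimage bound: choose $u_y$ with $Qu_y=y$ and $\|u_y\|\le C_Q\|y\|$. Then $|p(y)|=|\pair{u_y}{\xi}|\le C_Q\|\xi\|\|y\|$, so $p\in V^*$ and $\xi=Q^*p$.

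\emph{Transfer to $V$.} For any $(y,b)\in\gra\mathcal M$, pick $w$ with $Qw=y$. The polar inequality gives $\pair{Qu-y}{p-b}\ge0$, so $(Qu,p)\in(\gra\mathcal M)^\mu=\gra\mathcal M$. Hence $(u,Q^*p)$ lies in the pulled-back graph, which proves maximality.

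For part (ii), maximality of $A_Q$ and $B_Q$ follows from (i).

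\emph{Domains.} We have $\dom A_Q=Q^{-1}(\dom A)$ and $\dom B_Q=Q^{-1}(\dom B)$. Take $y\in\dom A\cap\operatorname{int}\dom B$ and $u$ with $Qu=y$. By continuity of $Q$, the set $Q^{-1}(\operatorname{int}\dom B)$ is an open set contained in $\dom B_Q$ and containing $u$. Hence $u\in\dom A_Q\cap\operatorname{int}(\dom B_Q)$.

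\emph{The sum graph.} Because $Q^*$ is linear,
\[
\gra(A_Q+B_Q)=\{(w,Q^*c):(Qw,c)\in\gra(A+B)\}=\gra(Q^*(A+B)Q).
\]

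\emph{The witness.} Let $(z,p)$ be a point of the polar of $\gra(A+B)$ outside that graph, and let $Qu=z$.
\begin{itemize}
\item For every $(w,Q^*c)$ in the pulled-back sum graph, $\pair{u-w}{Q^*p-Q^*c}=\pair{z-Qw}{p-c}\ge0$. So $(u,Q^*p)$ lies in the polar.
\item Suppose $(u,Q^*p)$ were in the pulled-back sum graph. Then $Q^*p=Q^*c$ for some $c$ with $(z,c)\in\gra(A+B)$. Injectivity of $Q^*$ gives $c=p$, so $(z,p)\in\gra(A+B)$, a contradiction.
\end{itemize}
This establishes \eqref{eq:pullback-witness}, and hence that $A_Q+B_Q$ is not maximally monotone.

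The main obstacle is the step in (i) showing that a polar element $\xi$ has the form $Q^*p$. It needs both the kernel-annihilation argument and the bounded-preimage hypothesis to make $p$ continuous. The remaining steps are direct adjoint computations.
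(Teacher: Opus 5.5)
Your proposal is correct and follows essentially the same route as the paper's proof. In (i) you use kernel annihilation, factor the polar functional through $Q$ with the preimage bound giving continuity, and transfer the polar inequality back to $V$. In (ii) you use preimages of the domains, the identity $A_Q+B_Q=Q^*(A+B)Q$, the adjoint identity for the polar inequality, and injectivity of $Q^*$ for nonmembership.
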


\begin{proof}
\noindent\textit{\textup{(\ref{thm:pullback-maximality})} Preservation of maximal monotonicity.}
Since $\mathcal M$ is maximally monotone, its graph is nonempty. Therefore, surjectivity of $Q$ implies that $\gra(Q^*\mathcal M Q)$ is nonempty. For any two points, we have 
$(u,Q^*a),(w,Q^*b)\in\gra(Q^*\mathcal M Q)$,
\[
\pair{u-w}{Q^*(a-b)}
=
\pair{Qu-Qw}{a-b}
\ge0.
\]
Thus $\gra(Q^*\mathcal M Q)$ is monotone.

Let $(v,v^*)\in U\times U^*$ be monotonically related to
$\gra(Q^*\mathcal M Q)$. Fix
$(u,Q^*a)\in\gra(Q^*\mathcal M Q)$. For every
$k\in\ker Q$ and $t\in\R$,
\[
(u+tk,Q^*a)\in\gra(Q^*\mathcal M Q),
\]
since $Q(u+tk)=Qu$. Hence
\[
\pair{v-u}{v^*-Q^*a}
-t\pair{k}{v^*}\ge0
\qquad(t\in\R).
\]
Since this holds for every $t\in\R$, we obtain
\[
\pair{k}{v^*}=0
\qquad(k\in\ker Q).
\]
Thus $v^*$ annihilates $\ker Q$.

For $y\in V$, choose any $w\in U$ satisfying $Qw=y$ and define
\[
p(y)=\pair{w}{v^*}.
\]
If $Qw_1=Qw_2=y$, then
$w_1-w_2\in\ker Q$, and hence
\[
\pair{w_1-w_2}{v^*}=0.
\]
The map $p$ is linear. Moreover, using a preimage $w$ satisfying
$\|w\|\le C_Q\|y\|$, we obtain
\[
|p(y)|
\le
\|w\|\|v^*\|
\le
C_Q\|v^*\|\|y\|.
\]
Thus $p\in V^*$ and, for every $w\in U$,
\[
\pair{w}{v^*}
=
p(Qw)
=
\pair{w}{Q^*p}.
\]
Therefore,
\[
v^*=Q^*p.
\]

Now let $(y,a)\in\gra\mathcal M$. By surjectivity, choose
$u\in U$ with $Qu=y$. Then
$(u,Q^*a)\in\gra(Q^*\mathcal M Q)$, so the polar inequality gives
\[
\pair{v-u}{v^*-Q^*a}\ge0.
\]
Since $v^*=Q^*p$,
\[
\pair{Qv-y}{p-a}\ge0
\qquad((y,a)\in\gra\mathcal M).
\]
Thus, $(Qv,p)$ is monotonically related to $\gra\mathcal M$. By maximality of $\mathcal M$,
\[
(Qv,p)\in\gra\mathcal M.
\]
This means
\[
(v,Q^*p)=(v,v^*)\in\gra(Q^*\mathcal M Q).
\]
Thus every point monotonically related to
$\gra(Q^*\mathcal M Q)$ already belongs to the graph. Since the graph is monotone, $\gra(Q^*\mathcal M Q)$ is maximally monotone.

\noindent\textit{\textup{(\ref{thm:pullback-counterexamples})} Transfer of counterexamples.}
The first assertion gives maximal monotonicity of $A_Q$ and $B_Q$. Their domains are $Q^{-1}(\dom A)$ and $Q^{-1}(\dom B)$. Choose $y_0\in\dom A\cap\operatorname{int}(\dom B)$ and $u_0\in U$ with $Qu_0=y_0$. Since $Q$ is continuous, $Q^{-1}(\operatorname{int}(\dom B))$ is an open subset of $\dom B_Q$ containing $u_0$. Hence $u_0\in\dom A_Q\cap\operatorname{int}(\dom B_Q)$.

Linearity of $Q^*$ gives
\begin{equation}
A_Q+B_Q=Q^*(A+B)Q.
\label{eq:pullback-sum}
\end{equation}
Since $A+B$ is monotone and not maximally monotone, choose $(z,p)\in(\gra(A+B))^\mu\setminus\gra(A+B)$, and let $Qu=z$. For every $(v,Q^*c)\in\gra(A_Q+B_Q)$, where $c\in(A+B)(Qv)$, we have
\[
\pair{u-v}{Q^*p-Q^*c}
=\pair{z-Qv}{p-c}\ge0.
\]
Thus $(u,Q^*p)$ belongs to the monotone polar of the sum graph. Surjectivity of $Q$ implies that $Q^*$ is injective. If $(u,Q^*p)$ belonged to $\gra(A_Q+B_Q)$, Eq. (\ref{eq:pullback-sum}) would give $c\in(A+B)z$ with $Q^*p=Q^*c$, so $p=c\in(A+B)z$, a contradiction. This proves Eq. (\ref{eq:pullback-witness}) and nonmaximality of $A_Q+B_Q$.
\end{proof}

\section{Counterexample families and explicit examples}
\label{sec:counterexamples}
In this section, we apply the construction theorem (Theorem~\ref{thm:endpoint-criterion}) to generate counterexample families and explicit examples satisfying Eq. (\ref{eq:original-cq}). The theorem characterizes maximality by the nonexistence of solutions to equations in the continuous dual and supplies a positive rank-one operator with full domain whose addition produces a nonmaximal sum. We use this criterion to vary the linear maps and curve and obtain pairs with different structural properties under the same interior-domain condition.

First, we vary the block weights and curves, prescribe the dimensions of the first operator's values, and construct further second operators for which the sum remains nonmaximal. Next, we give an explicit counterexample on $c_0$ and apply the pullback theorem to obtain one on standard $\ell^1$. Finally, surjective pullback and extension from closed subspaces yield counterexamples on every Banach space containing a closed subspace or admitting a quotient isomorphic to $c_0$ or $\ell^1$. 

\subsection{Block maps}
\label{subsec:common-block-maps}
First, we define the block maps used in the families below and in the explicit construction on $c_0$.

Let $I=\Nzero\times\N$ and $Y=c_0(I)$ with the supremum norm. For every $x\in Y$ and $\epsilon>0$, only finitely many $(b,j)\in I$ satisfy $|x_{b,j}|\ge\epsilon$. Its continuous dual is $\ell^1(I)$, with $\pair{x}{a}=\sum_{b,j}x_{b,j}a_{b,j}$. We also use this pairing for $x\in\ell^\infty(I)$ and $a\in\ell^1(I)$. The unit coordinate vectors are denoted by $e_{b,j}$.

We index coordinates by blocks. The map $m$ records the sum in each block, and $E$ applies a triangular operator within that block. The additional constraint below couples the block sums through $F$.
\begin{definition}[Block maps]
For $a\in\ell^1(I)$, define
\begin{equation}
\begin{aligned}
(ma)_b&=\sum_{j\ge1}a_{b,j},\\
(Ea)_{b,j}&=a_{b,j}+2\sum_{k>j}a_{b,k}.
\end{aligned}
\label{eq:D4}
\end{equation}
Here $m:\ell^1(I)\to\ell^1(\Nzero)\subset\ell^2(\Nzero)$ and $E:\ell^1(I)\to Y$. Put
\begin{equation}
\begin{aligned}
g&=e_{0,1}-e_{0,2}\in Y^*,\\
h&=Eg=-e_{0,1}-e_{0,2}\in Y,\\
r(a)&=\pair{h}{a}=-a_{0,1}-a_{0,2},\\
La&=-Ea+r(a)h.
\end{aligned}
\label{eq:D5}
\end{equation}
\end{definition}

For the unweighted block map, write $K=\ker m\cap\ker r$.

\subsection{Families from the construction theorem}
\label{subsec:counterexample-families-main}
We apply Theorem~\ref{thm:endpoint-criterion} by computing the range of the weighted block map and choosing curves whose values are realized by weighted block sums of continuous dual elements but whose limit is not. This gives a family of maximally monotone first operators with a common positive rank-one partner and nonmaximal sums. Next, a skew modification preserves this maximality criterion and allows us to prescribe the dimensions of the first operator's values. Finally, the theorem's pairing identity yields further counterexamples with the first operator fixed and the second operator varied.

\subsubsection{Weighted constructions and excluded limits}
\label{subsec:parameter-families}

First, we apply Theorem~\ref{thm:endpoint-criterion} to construct counterexample pairs $(A_{w,\omega},P)$ on $c_0(I)$ by varying the block weights and the curve. We compute exactly which weighted block sums can be attained by continuous dual elements, then choose the curve values among these attainable sums and its limit outside them. This turns the theorem's maximality criterion into an explicit condition on the weights and curve, with the same positive rank-one operator $P$ producing a nonmaximal sum for every admissible choice. The resulting families include $C^\infty$ curves and compact auxiliary maps, showing that these regularity properties are compatible with the failure of maximality under addition.

We retain $I=\Nzero\times\N$, $Y=c_0(I)$, $H=\R\oplus\ell^2(\N)$, and the vectors $g,h$ and functional $r$ in Eq. (\ref{eq:D5}). Let $w_0=1$, $w_n>0$ for $n\ge1$, and $\sup_n w_n<\infty$. Define
\begin{equation}
\begin{aligned}
(M_wa)_b&=w_b\sum_{j\ge1}a_{b,j},\\
(E_wa)_{b,j}&=w_b^2\left(a_{b,j}+2\sum_{k>j}a_{b,k}\right),\\
L_wa&=-E_wa+r(a)h,
\end{aligned}
\label{eq:family-weighted-maps}
\end{equation}
and put $\mathcal R_w=\{y\in\ell^2(\N):\sum_{n\ge1}|y_n|/w_n<\infty\}$. For a curve $\omega:(0,\infty)\to\ell^2(\N)$, use $J$ and $F(t)=(\sgn t,\omega(|t|-1))$ from Definition~\ref{def:construction-graph}, and set
\begin{equation}
\begin{aligned}
D_{w,\omega}&=\{a\in\ell^1(I):r(a)\in J,\ M_wa=F(r(a))\},\\
\gra A_{w,\omega}&=\{(L_wa,a):a\in D_{w,\omega}\},
\qquad Px=\pair{x}{g}g.
\end{aligned}
\label{eq:family-weighted-graph}
\end{equation}

\begin{proposition}[Weighted endpoints and curve choices]
\label{cor:family-weighted-endpoints}
For the maps in Eq. (\ref{eq:family-weighted-maps}), the following statements hold.
\begin{enumerate}
\item The exact range of $(r,M_w)$ is
\begin{equation}
\operatorname{ran}(r,M_w)=\R\times\R\times\mathcal R_w.
\label{eq:family-actual-range}
\end{equation}
Suppose $\omega$ is $\ell$-Lipschitz with $\ell\le1$ and satisfies Eq. (\ref{eq:abs-tail}). Then Assumption~\ref{ass:construction} holds if and only if $\omega(s)\in\mathcal R_w$ for every $s>0$, and Eq. (\ref{eq:abs-endpoint-exclusion}) holds if and only if $\eta\notin\mathcal R_w$. When both conditions hold, $A_{w,\omega}$ and $P$ are maximally monotone, $\dom A_{w,\omega}\ne\varnothing$, $\dom P=Y$, and
\[
(0,0)\in\bigl(\gra(A_{w,\omega}+P)\bigr)^\mu
\setminus\gra(A_{w,\omega}+P).
\]
\item For every $\eta\in\ell^2(\N)\setminus\mathcal R_w$ with $0<\|\eta\|_2<1$ and every $0<\ell\le1$, there is an $\ell$-Lipschitz curve $\omega:(0,\infty)\to c_{00}(\N)$ satisfying Eq. (\ref{eq:abs-tail}) and vanishing for all sufficiently large $s$. It can also be chosen to extend to a $C^\infty$ curve on $[0,\infty)$ with
\[
\omega(0)=\eta,\qquad \omega^{(k)}(0)=0\quad(k\ge1).
\]
Both choices satisfy all the conditions in part (1). In particular, $w_n=1$ realizes every endpoint in $\ell^2\setminus\ell^1$ of norm less than one.
\item The maps $M_w$ and $(r,M_w)$ are compact if and only if $w_n\to0$. The range in Eq. (\ref{eq:family-actual-range}) is dense and is nonclosed whenever an excluded endpoint as in part (2) is fixed. In particular, the choices
\begin{equation}
w_n=2^{-n},\qquad \eta_n=\frac{2^{-n}}{\sqrt n}\quad(n\ge1)
\label{eq:family-compact-example}
\end{equation}
give compact $(r,M_w)$ and an excluded endpoint belonging to every $\ell^p(\N)$, $p>0$.
\end{enumerate}
\end{proposition}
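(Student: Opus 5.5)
The plan is to reduce everything to Theorem~\ref{thm:endpoint-criterion}. I take $X=Y=c_0(I)$, $H_0=\ell^2(\N)$, and
\[
Ma=\bigl((M_wa)_0,((M_wa)_n)_{n\ge1}\bigr)\in\R\oplus\ell^2(\N).
\]
The first step is the range formula in Eq.~(\ref{eq:family-actual-range}).

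For inclusion ``$\subseteq$'': if $y_n=(M_wa)_n$, then $\sum_{n\ge1}|y_n|/w_n\le\|a\|_1$. Also $y\in\ell^2$, because $\ell^1\subset\ell^2$ and $w$ is bounded. Finally $r(a)=-a_{0,1}-a_{0,2}$ and $(M_wa)_0=\sum_j a_{0,j}$ are independent, since block $0$ has a third coordinate.

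For inclusion ``$\supseteq$'': given $(t,s,y)$, I take $a_{0,1}=-t$, $a_{0,2}=0$, $a_{0,3}=s+t$ and $a_{n,1}=y_n/w_n$, with all other entries zero. This $a$ lies in $\ell^1(I)$ exactly when $y\in\mathcal R_w$.

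Next I verify (H1)--(H3).

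\begin{itemize}
\item (H1): $M_wg=0$ because block $0$ of $g$ sums to $1-1=0$, and $E_wg=h$. The bilinear identity follows blockwise: for fixed $b$,
\[
\sum_j\Bigl(a_jb_j+2\sum_{k>j}a_kb_j\Bigr)+\sum_j\Bigl(b_ja_j+2\sum_{k>j}b_ka_j\Bigr)=2\Bigl(\sum_j a_j\Bigr)\Bigl(\sum_k b_k\Bigr).
\]
Multiplying by $w_b^2$ and summing over $b$ gives $2(M_wa,M_wb)_H$. All rearrangements are legitimate by absolute convergence in $\ell^1$.
\item (H2): $K$ contains $e_{b,j}-e_{b,k}$ for $b\ge1$, $e_{0,j}-e_{0,k}$ for $j,k\ge3$, and $e_{0,1}-e_{0,2}$. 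Hence any $x\in c_0(I)$ annihilating $K$ is constant on each block $b\ge1$ and on $\{(0,j):j\ge3\}$. These constants must vanish because $x\in c_0$. In addition $x_{0,1}=x_{0,2}$. Conversely, $h=-e_{0,1}-e_{0,2}$ annihilates $K$, because $\pair{h}{k}=r(k)=0$. So the annihilator is $\R h$.
\item (H3): this is assumed.
\end{itemize}

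By the range formula, (H4) says precisely that $(t,\sgn t,\omega(|t|-1))$ is attained for each $t\in J$, i.e.\ $\omega(s)\in\mathcal R_w$ for all $s>0$. In the same way, Eq.~(\ref{eq:abs-endpoint-exclusion}) fails iff some $(\tau,\tau,\eta)$ is attained, i.e.\ iff $\eta\in\mathcal R_w$. Part (1) then follows from Theorem~\ref{thm:endpoint-criterion}(\ref{thm:maximality-part}) and (\ref{thm:sum-part}).

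For part (2), let $\eta^{(N)}$ denote the truncation of $\eta$ to its first $N$ coordinates.

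\begin{itemize}
\item Choice of truncation levels: choose $N_1<N_2<\cdots$ with tail norms $d_k=\|\eta^{(N_{k+1})}-\eta^{(N_k)}\|_2$ summable. This is possible since $\|\eta-\eta^{(N)}\|_2\to0$. Set $\delta_k=d_k/\ell$ and $s_k=\sum_{i\ge k}\delta_i$, which decreases to $0$.
\item Definition of $\omega$: on $[s_{k+1},s_k]$, let $\omega$ interpolate linearly from $\eta^{(N_{k+1})}$ to $\eta^{(N_k)}$. After $s_1$, go linearly from $\eta^{(N_1)}$ to $0$ at slope $\ell$, and set $\omega=0$ afterwards.
\item Verification: each value lies in $c_{00}\subset\mathcal R_w$, and $\omega$ is $\ell$-Lipschitz. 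Every value is coordinatewise a convex combination of $0$ and truncations, so it has norm $\le\|\eta\|_2$, i.e.\ squared norm $\le S$. Moreover $\omega(s)\to\eta$ as $s\downarrow0$.
\end{itemize}

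For the $C^\infty$ version I replace linear interpolation by $\phi\bigl((s-s_{k+1})/\delta_k\bigr)$, where $\phi$ is a smooth step with all derivatives vanishing at $0$ and $1$. I enlarge $\delta_k$ by the factor $\sup|\phi'|$ to keep the Lipschitz constant $\le\ell$. The $m$-th derivative on the $k$-th piece is then $O(d_k/\delta_k^m)$. The main obstacle is forcing these to tend to $0$ for every $m$. I would first fix $\delta_k=2^{-k}$ and then pick $N_{k+1}$ so large that $d_k\le\ell\,\delta_k^{k+1}/\sup|\phi'|$. Then for each fixed $m$, $d_k/\delta_k^m\to0$, which gives $\omega^{(m)}(s)\to0$ as $s\downarrow0$. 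Iterating the mean value theorem then yields a $C^\infty$ extension with $\omega^{(m)}(0)=0$. For $w_n\equiv1$ we have $\mathcal R_w=\ell^1$.

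For part (3), $M_w$ is the rank-one block-$0$ part plus $\operatorname{diag}(w_n)\circ\Sigma$, where $\Sigma:\ell^1(\{b\ge1\}\times\N)\to\ell^1(\N)$ is the block-summing quotient map. $\Sigma$ maps the unit ball onto the unit ball, so compactness of $M_w$ is equivalent to compactness of $\operatorname{diag}(w):\ell^1\to\ell^2$.
\begin{itemize}
\item If $w_n\to0$, $\operatorname{diag}(w)$ is a norm limit of finite-rank maps, hence compact.
\item Otherwise the orthogonal vectors $w_ne_n$ have a subsequence bounded below in norm, so $\operatorname{diag}(w)$ is not compact.
\end{itemize}
The same holds for $(r,M_w)$, since $r$ has rank one. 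The range contains $\R^2\times c_{00}$, so it is dense. It misses the fixed $\eta$, which lies in the closure, so it is not closed. For Eq.~(\ref{eq:family-compact-example}): $\eta_n/w_n=1/\sqrt n$ is not summable, $\|\eta\|_2<1$, and $\sum_n(2^{-n}/\sqrt n)^p<\infty$ for every $p>0$.
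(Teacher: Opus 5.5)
\textbf{Parts (1) and (3).} These essentially follow the paper:
\begin{itemize}
\item the same explicit preimage $-te_{0,1}+(t+\beta)e_{0,3}+\sum_n (y_n/w_n)e_{n,1}$ gives the range;
\item the same blockwise pairing identity and annihilator tests give (H1)--(H2);
\item the same finite-rank truncation, versus the separated images $w_ne_n$, decides compactness.
\end{itemize}
You skip one routine check that the paper makes. Definition~\ref{def:construction-maps} requires $E_w$ to be a bounded map into $c_0(I)$, not merely into $\ell^\infty(I)$. This follows from the uniform bound on $E_w$, the fact that $E_w$ preserves finite support, and density of finitely supported vectors in $\ell^1(I)$.

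\textbf{Part (2): a different route.} The paper chooses blocks $N_{j-1}<n\le N_j$ with $\sum_{n>N_j}|\eta_n|^2\le 2^{-j^2}$. It sets graded speeds $q_n=j$ on the $j$th block, so all moments $\sum_n q_n^{2k}|\eta_n|^2$ are finite. It then switches every coordinate off simultaneously at its own speed, via $\omega_n(s)=\eta_n\max\{1-cq_ns,0\}$ or $\eta_n\chi(dq_ns)$. The first moment gives the Lipschitz constant, and the higher moments give $C^\infty$ regularity and flatness at $0$ by dominated convergence. One closed formula thus serves both choices.

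You instead reparametrize time so that the curve moves through the truncations $\eta^{(N_k)}$ one block at a time. It moves linearly, or along a flat step $\phi$, over intervals whose lengths are tied to the increments $d_k$. This is more elementary: there are no moment estimates, and the slope is exactly $\ell$ on each piece. The cost is that the smooth version needs bookkeeping at the junctions, plus a separate argument that all derivatives extend continuously to $0$. You supply that argument by forcing $d_k\le C\delta_k^{k+1}$ and using the mean value inequality. Both routes work.

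\textbf{Two slips in your smooth step.}
\begin{itemize}
\item Choosing $N_{k+1}$ larger can only increase $d_k=\|\eta^{(N_{k+1})}-\eta^{(N_k)}\|_2$. What you need is to choose $N_k$ inductively with $\|\eta-\eta^{(N_k)}\|_2\le \ell\delta_k^{k+1}/\sup|\phi'|$. That tail bounds $d_k$ whatever $N_{k+1}$ is.
\item $\phi$ should be monotone with values in $[0,1]$. This is what gives the coordinatewise domination $|\omega_n(s)|\le|\eta_n|$, and hence $\|\omega(s)\|_2^2\le S$, exactly as in your linear version.
\end{itemize}
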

\begin{proof}
\noindent\textit{(1)} Put $C=\max\{1,\sup_{n\ge1}w_n\}$. Then $\|M_wa\|_2\le C\|a\|_1$ and $\|E_wa\|_\infty\le2C^2\|a\|_1$. The map $E_w$ preserves finite support, so density of finitely supported vectors in $\ell^1(I)$ gives $E_wa\in c_0(I)$. Absolute convergence of the block sums gives
\begin{equation}
\pair{E_wa}{b}+\pair{E_wb}{a}
=2\sum_{n\ge0}w_n^2(ma)_n(mb)_n
=2(M_wa,M_wb)_H.
\label{eq:family-weighted-pairing}
\end{equation}
Also $M_wg=0$ and $E_wg=h$, since block zero is unchanged. This proves \textup{(\ref{ass:bilinear})}. All weights are positive, so $\ker M_w=\ker m$ and $\ker M_w\cap\ker r=K$, with $K=\ker m\cap\ker r$. To verify \textup{(\ref{ass:annihilator})}, let $z\in c_0(I)$ annihilate $K$. Testing $e_{n,j}-e_{n,k}$ makes $z$ constant, hence zero, on every positive block. The same tests on the tail of block zero make that tail zero, and testing $g\in K$ gives $z_{0,1}=z_{0,2}$. Thus the annihilator is $\R h$.

For every $a\in\ell^1(I)$, the block-sum bound gives $\sum_{n\ge1}|(M_wa)_n|/w_n\le\|a\|_1$. Conversely, for $t,\beta\in\R$ and $y\in\mathcal R_w$, the actual dual vector
\begin{equation}
a_w(t,\beta,y)
=-t e_{0,1}+(t+\beta)e_{0,3}
 +\sum_{n\ge1}\frac{y_n}{w_n}e_{n,1}
\label{eq:family-range-label}
\end{equation}
satisfies $r(a_w(t,\beta,y))=t$ and $M_wa_w(t,\beta,y)=(\beta,y)$. This proves Eq. (\ref{eq:family-actual-range}). In particular, \textup{(\ref{ass:realization})} holds exactly when every $\omega(s)$ belongs to $\mathcal R_w$, with explicit labels
\begin{equation}
a_w^t=-t e_{0,1}+(t+\sgn t)e_{0,3}
 +\sum_{n\ge1}\frac{\omega_n(|t|-1)}{w_n}e_{n,1},
\qquad t\in J.
\label{eq:family-parameter-label}
\end{equation}
The full parameter fibre is $a_w^t+K$. Condition \textup{(\ref{ass:curve-limit})} is the assumed Eq. (\ref{eq:abs-tail}). If $\eta\notin\mathcal R_w$, the endpoint equation has no solution; if $\eta\in\mathcal R_w$, Eq. (\ref{eq:family-range-label}) with $(t,\beta,y)=(\tau,\tau,\eta)$ solves it for every $\tau\in[-1,1]$. Theorem~\ref{thm:endpoint-criterion} now gives the asserted maximality, domain condition and missing polar point.

\noindent\textit{(2)} Choose strictly increasing integers $N_j$ such that $\sum_{n>N_j}|\eta_n|^2\le2^{-j^2}$, put $N_0=0$, and set $q_n=j$ for $N_{j-1}<n\le N_j$. Then $q_n\uparrow\infty$ and
\[
\sum_{n\ge1}q_n^{2k}|\eta_n|^2<\infty\qquad(k\ge0),
\]
since the contribution from the $j$th block, $j\ge2$, is at most $j^{2k}2^{-(j-1)^2}$. Define
\begin{equation}
c=\frac{\ell}{\|(q_n\eta_n)_n\|_2},\qquad
\omega_n(s)=\eta_n\max\{1-cq_ns,0\}.
\label{eq:family-linear-cutoff}
\end{equation}
For every $s>0$, this profile has finite support, and it vanishes for $s\ge1/c$. Coordinatewise domination gives $\|\omega(s)\|_2\le\|\eta\|_2$ and $\omega(s)\to\eta$ in $\ell^2$ as $s\downarrow0$. Moreover,
\[
\|\omega(s)-\omega(u)\|_2^2
\le c^2|s-u|^2\sum_{n\ge1}q_n^2|\eta_n|^2
=\ell^2|s-u|^2.
\]
Thus \textup{(\ref{ass:curve-limit})} and the Lipschitz requirement hold, and finite support makes the labels in Eq. (\ref{eq:family-parameter-label}) elements of $\ell^1(I)$.

For the smooth choice, take $\chi\in C^\infty([0,\infty))$ with $0\le\chi\le1$, $\chi=1$ on $[0,\frac12]$, and $\chi=0$ on $[1,\infty)$. Put
\begin{equation}
d=\frac{\ell}{\|\chi'\|_\infty\|(q_n\eta_n)_n\|_2},\qquad
\omega_n(s)=\eta_n\chi(dq_ns).
\label{eq:family-smooth-cutoff}
\end{equation}
The same estimates give finite support for $s>0$, extinction for $s\ge1/d$, the norm bound and the Lipschitz constant. For each $k\ge0$, the coordinate derivatives $(\eta_n(dq_n)^k\chi^{(k)}(dq_ns))_n$ are dominated in $\ell^2$ by a fixed multiple of $(q_n^k|\eta_n|)_n$. Dominated convergence gives their continuity, including at zero, and the next moment bounds the difference quotients. Induction therefore proves $C^\infty$ regularity as an $\ell^2$-valued curve. Since every positive-order derivative of $\chi$ vanishes at zero, all positive-order derivatives of $\omega$ there are zero. The same explicit labels verify \textup{(\ref{ass:realization})}, and part (1) applies to both choices.

\noindent\textit{(3)} Truncating $M_w$ to its first $N+1$ output coordinates gives a finite-rank map with error at most $\sup_{n>N}w_n$. Hence $w_n\to0$ implies compactness. If $w_n$ does not tend to zero, the images $M_we_{n,1}=w_ne_n$ have a subsequence separated by a fixed positive distance, which excludes compactness. Adding the scalar finite-rank map $r$ gives the same criterion for $(r,M_w)$. The space $\mathcal R_w$ contains $c_{00}(\N)$ and hence is dense in $\ell^2(\N)$; an excluded endpoint lies in its closure and not in the range. Finally, Eq. (\ref{eq:family-compact-example}) gives $0<\|\eta\|_2^2\le\sum_{n\ge1}4^{-n}=\frac13$, and $\sum_n|\eta_n|^p<\infty$ for every $p>0$, whereas $\sum_n|\eta_n|/w_n=\sum_n n^{-\frac12}=\infty$. Part (2) supplies the required curves.
\end{proof}

\subsubsection{First operators with prescribed affine values}
\label{subsec:affine-value-families}

Next, we use Theorem~\ref{thm:endpoint-criterion} to construct counterexamples whose first operator is single-valued or has nonempty values of any prescribed finite affine dimension or of infinite affine dimension. We add a specified skew map to $E_w$, changing the primal components of the graph while preserving the pairing identity, the parameter fibres and the maximality criterion. Thus the modified first operator remains maximally monotone and its sum with the same $P$ remains nonmaximal. Computing the kernel of the modified map then determines the affine values exactly, so the failure occurs with each of the stated value dimensions.

\begin{proposition}[Prescribed affine values]
\label{cor:family-affine-values}
Fix $w,\eta,\omega$ satisfying part (1) of Proposition~\ref{cor:family-weighted-endpoints}, with $\omega(s)\in\mathcal R_w$ for every $s>0$ and $\eta\notin\mathcal R_w$. For any $B\subset\N$, define
\begin{equation}
\begin{aligned}
S_Ba&=\sum_{n\in B}w_n^2(-a_{n,2}e_{n,1}+a_{n,1}e_{n,2}),\\
E_{w,B}&=E_w+S_B,\qquad L_{w,B}=L_w-S_B,\\
V_B&=\left\{\sum_{n\in B}c_n(e_{n,1}-e_{n,2}):(c_n)_{n\in B}\in\ell^1(B)\right\},\\
\gra A_{w,B,\omega}&=\{(L_{w,B}a,a):a\in D_{w,\omega}\}.
\end{aligned}
\label{eq:family-skew-maps}
\end{equation}
The operators $A_{w,B,\omega}$ and $P$ satisfy the conclusions of part (1) of Proposition~\ref{cor:family-weighted-endpoints}. Moreover,
\begin{equation}
\ker E_{w,B}=\ker L_{w,B}=V_B,\qquad
A_{w,B,\omega}(x)=a+V_B\quad(a\in A_{w,B,\omega}(x)).
\label{eq:family-affine-values}
\end{equation}
Thus $B=\varnothing$ gives a single-valued first operator, $|B|=d$ gives every nonempty value affine dimension exactly $d$, and infinite $B$ gives every nonempty value a translate of a subspace isomorphic to $\ell^1(B)$.
\end{proposition}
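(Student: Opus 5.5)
The plan is to apply Theorem~\ref{thm:endpoint-criterion} with $E$ replaced by $E_{w,B}$ while keeping $M=M_w$, $g$ and the curve $\omega$ unchanged. The first task is to check that the auxiliary data are unchanged. Since $S_B$ only touches the first two coordinates of blocks $n\in B\subset\N$, and block zero is excluded, we have $S_Bg=0$. Hence $h=E_{w,B}g=E_wg$, the functional $r$ is unchanged, and
\[
L_{w,B}a=-E_{w,B}a+r(a)h=L_wa-S_Ba .
\]
Next I would show that $S_B$ is skew, i.e. $\pair{S_Ba}{b}=-\pair{S_Bb}{a}$. Indeed, $\pair{S_Ba}{b}=\sum_{n\in B}w_n^2(-a_{n,2}b_{n,1}+a_{n,1}b_{n,2})$, which is antisymmetric in $(a,b)$. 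It is bounded $\ell^1(I)\to c_0(I)$ because the weights are bounded and the coefficients are summable. Skewness shows that Eq. (\ref{eq:abs-bilinear}) still holds for $E_{w,B}$. Conditions (\ref{ass:annihilator})--(\ref{ass:realization}) involve only $M_w$, $r$, $\omega$ and $h$, so they hold exactly as in the proof of Proposition~\ref{cor:family-weighted-endpoints}. The same holds for the endpoint criterion Eq. (\ref{eq:abs-endpoint-exclusion}), which is equivalent to $\eta\notin\mathcal R_w$ by Eq. (\ref{eq:family-actual-range}). The domain set $D_{w,\omega}$ is the same. Theorem~\ref{thm:endpoint-criterion} then yields maximality of $A_{w,B,\omega}$, nonemptiness of its domain, and the missing polar point $(0,0)$ of the sum with $P$.

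For the kernel statement, I first note that $\ker L_{w,B}=\ker E_{w,B}$. If $E_{w,B}a=0$, then $r(a)=\pair{h}{a}=\pair{E_{w,B}g}{a}$. Using the pairing identity with $b=g$ and the skewness, this equals $-\pair{E_{w,B}a}{g}+2(M_wa,M_wg)=0$, so $L_{w,B}a=0$. Conversely, suppose $L_{w,B}a=0$. Pairing with $g$ via Eq. (\ref{eq:abs-pairing}) gives $r(a)=\pair{L_{w,B}a}{g}=0$; that identity only used Eq. (\ref{eq:abs-bilinear}) and $Mg=0$, so it is still valid. Hence $E_{w,B}a=0$.

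Then I would solve $E_{w,B}a=0$ block by block, since both $E_w$ and $S_B$ act within blocks. On a block $n\notin B$, the triangular system $a_{n,j}+2\sum_{k>j}a_{n,k}=0$ with an $\ell^1$ tail forces $a_{n,\cdot}=0$. To see this, let $T_j=\sum_{k\ge j}a_{n,k}$; the equations read $T_j+T_{j+1}=0$, and $T_j\to0$, so all $T_j=0$. On a block $n\in B$, the equations for $j\ge3$ force $a_{n,j}=0$ for $j\ge3$. The two remaining equations are
\[
a_{n,1}+2a_{n,2}-a_{n,2}=a_{n,1}+a_{n,2}=0,\qquad a_{n,2}+a_{n,1}=0 .
\]
So the solutions are exactly $a_{n,1}=-a_{n,2}=c_n$, with $(c_n)\in\ell^1(B)$ coming from $a\in\ell^1$. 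This gives $\ker E_{w,B}=V_B$.

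For the affine values, I need to show that $V_B\subset\ker M_w\cap\ker r=K$ and that $A_{w,B,\omega}(x)=a+V_B$ for any $a\in A_{w,B,\omega}(x)$. The inclusion $V_B\subset K$ holds because block sums vanish and block zero is untouched. Hence $a+v\in D_{w,\omega}$ with $L_{w,B}(a+v)=x$, which gives $a+V_B\subset A_{w,B,\omega}(x)$. Conversely, if $a,a'\in D_{w,\omega}$ with $L_{w,B}a=L_{w,B}a'$, then $a-a'\in\ker L_{w,B}=V_B$. The dimension claims follow: $V_B\cong\ell^1(B)$ via $c\mapsto\sum c_n(e_{n,1}-e_{n,2})$.

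The main obstacle I expect is the careful block-triangular kernel computation for infinite tails; the $T_j$ recursion should handle it.
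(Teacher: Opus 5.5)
Your proposal is correct and follows essentially the same route as the paper. Skewness of $S_B$ and $S_Bg=0$ leave Assumption~\ref{ass:construction} and the endpoint criterion intact. The block-by-block solution of $E_{w,B}a=0$ then gives $\ker E_{w,B}=V_B$; your tail sums $T_j$ are equivalent to the paper's subtraction of consecutive rows. The only cosmetic difference is that you obtain $\ker E_{w,B}\subset\ker L_{w,B}$ from the pairing identity with $g$, whereas the paper uses $V_B\subset K$ and $E_{w,B}V_B=\{0\}$ after computing the kernel.
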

\begin{proof}
The map $S_B:\ell^1(I)\to c_0(I)$ is bounded, $S_Bg=0$, and $\pair{S_Ba}{b}=-\pair{S_Bb}{a}$. Therefore $E_{w,B}g=h$ and the symmetric pairing identity in Eq. (\ref{eq:family-weighted-pairing}) is unchanged. The maps $r,M_w$, the space $K$, the curve and the labels in Eq. (\ref{eq:family-parameter-label}) are also unchanged. Consequently all four conditions of Assumption~\ref{ass:construction} and Eq. (\ref{eq:abs-endpoint-exclusion}) hold, so Theorem~\ref{thm:endpoint-criterion} applies with the same $P$.

To compute $\ker E_{w,B}$, suppose $E_{w,B}a=0$. In each block, subtracting consecutive output equations for $j\ge3$ gives $a_{n,j}+a_{n,j+1}=0$. Summability forces the entire tail to be zero. Outside $B$, including block zero, the first two equations then force $a_{n,1}=a_{n,2}=0$. For $n\in B$, the first two rows of $E_{w,B}a$ both equal
\[
w_n^2\left(a_{n,1}+a_{n,2}+2\sum_{j>2}a_{n,j}\right),
\]
so their remaining condition is $a_{n,1}+a_{n,2}=0$. This proves $\ker E_{w,B}=V_B$. Eq. (\ref{eq:abs-pairing}) gives $\pair{L_{w,B}a}{g}=r(a)$; hence $L_{w,B}a=0$ implies $r(a)=0$ and $E_{w,B}a=0$. Conversely, $V_B\subset K$ and $E_{w,B}V_B=\{0\}$ give $L_{w,B}V_B=\{0\}$. Thus the two kernels agree. Adding $v\in V_B$ to an admissible label preserves its parameter, mass and primal point, and any two labels at the same primal point differ by an element of $\ker L_{w,B}$. This proves Eq. (\ref{eq:family-affine-values}). Finally, the map $(c_n)\mapsto\sum_nc_n(e_{n,1}-e_{n,2})$ has norm $2\|(c_n)\|_1$, giving the stated dimension and isomorphism claims.
\end{proof}

\subsubsection{Positive coefficients and monotone perturbations}
\label{subsec:main-positive-perturbations}
Next, we keep the first operator $\mathcal A$ from a counterexample in Theorem~\ref{thm:endpoint-criterion} fixed and construct a family of second operators $B=\lambda\mathcal P+R$. For every $\lambda>0$, the theorem's pairing identity yields a point $(z,p)$ monotonically related to $\gra(\mathcal A+\lambda\mathcal P)$ with $z\notin\dom\mathcal A$. For any norm-continuous monotone map $R:X\to X^*$, monotonicity makes $(z,p+Rz)$ monotonically related to $\gra(\mathcal A+B)$ without belonging to it, and continuity ensures that the monotone map $B$ is maximally monotone. Thus all these pairs satisfy Eq. (\ref{eq:original-cq}) and have nonmaximal sums, including arbitrarily small positive values of $\lambda$. For $R=0$, the argument also gives at least continuum many distinct maximally monotone extensions of the sum.
\begin{proposition}[Positive coefficients and monotone perturbations]
\label{cor:positive-perturbations}
Let $\mathcal A$ and $\mathcal P$ be as in Theorem~\ref{thm:endpoint-criterion}, and suppose Eq. (\ref{eq:abs-endpoint-exclusion}) holds. Then:
\begin{enumerate}
\item For every $\lambda>0$, the sum $\mathcal A+\lambda\mathcal P$ is not maximally monotone and has at least continuum many distinct maximally monotone extensions. In particular,
\[
\{\lambda\ge0:\mathcal A+\lambda\mathcal P\text{ is maximally monotone}\}=\{0\}.
\]
\item For every $\lambda>0$ and every norm-continuous monotone map $R:X\to X^*$, the operator $B=\lambda\mathcal P+R$ is maximally monotone with full domain, and $\mathcal A+B$ is not maximally monotone.
\end{enumerate}
\end{proposition}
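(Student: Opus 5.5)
The plan is to reduce everything to the scalar--Hilbert computation already carried out in the proof of Theorem~\ref{thm:endpoint-criterion}(\ref{thm:polar-part}). The origin $(0,0)$ only works for $\lambda>S$, so for small $\lambda$ I will instead build explicit points in $(\gra(\mathcal A+\lambda\mathcal P))^\mu$ from the realizing labels of Assumption~\ref{ass:construction}(\ref{ass:realization}).

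For $\lambda\ge0$, Eq.~(\ref{eq:abs-pairing}) gives $\mathcal P La=r(a)g$, so
\[
\gra(\mathcal A+\lambda\mathcal P)=\{(La,a+\lambda r(a)g):a\in D\}.
\]
Take $p\in X^*$ with $r(p)=0$ and $x=-Ep=Lp$. In the notation of Eq.~(\ref{eq:abs-polar-square}) this means $v=0$ and $\tau=\nu=0$. By Eq.~(\ref{eq:abs-pairing}), $\pair{Lp}{g}=r(p)=0$ and $\pair{La}{g}=t$, where $t=r(a)$. Eq.~(\ref{eq:abs-polar-square}) together with the extra term coming from $\lambda r(a)g$ then gives
\[
\pair{Lp-La}{p-a-\lambda t g}=t^2-\|F(t)-Mp\|_H^2+\lambda t^2 .
\]
I will take $p=p_s=\tfrac12(a^{1+s}+a^{-1-s})$ for $s>0$. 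It satisfies $r(p_s)=0$ and $Mp_s=(0,\omega(s))$. Write $|t|=1+\sigma$ with $\sigma>0$ and $\delta_s=\|\omega(s)-\eta\|$. Since $\omega$ is $1$-Lipschitz with limit $\eta$, we have $\|\omega(\sigma)-\omega(s)\|\le\sigma+\delta_s$. Hence the right-hand side is at least
\[
2\sigma(1-\delta_s)-\delta_s^2+\lambda(1+\sigma)^2 ,
\]
which is positive as soon as $\delta_s<\min\{1,\sqrt\lambda\}$. By Assumption~\ref{ass:construction}(\ref{ass:curve-limit}) this holds for all sufficiently small $s>0$.

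Next I check that these points are not in the graph. If $Lp_s=La'$ for some $a'\in D$, then $0=r(p_s)=\pair{Lp_s}{g}=\pair{La'}{g}=r(a')$, which contradicts $|r(a')|>1$. So $Lp_s\notin\dom\mathcal A$, and $(Lp_s,p_s)$ lies in the polar of the sum graph but not in the graph. Hence $\mathcal A+\lambda\mathcal P$ is not maximally monotone. For $\lambda=0$ the operator is $\mathcal A$, which is maximal by Theorem~\ref{thm:endpoint-criterion}(\ref{thm:sum-part}).

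For the continuum of extensions, take two parameters $s,s'$ with $\omega(s)\ne\omega(s')$. Then
\[
\pair{Lp_s-Lp_{s'}}{p_s-p_{s'}}=0-\|\omega(s)-\omega(s')\|^2<0
\]
by Eq.~(\ref{eq:abs-pairing}). Adjoin one such point to the sum graph and extend by Zorn's lemma to a maximally monotone extension. Any maximal extension contains at most one point from a family of pairwise non-monotonically-related points, so distinct values of $\omega(s)$ give distinct extensions. It remains to count the values $\omega(s)$ for small $s$. Each $\omega(s)$ is realized, while Eq.~(\ref{eq:abs-endpoint-exclusion}) forbids realizing $\eta$ together with $\tau=0$. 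Since $a^{1+s}$ and $a^{-1-s}$ exist, the averaging trick shows $\omega(s)\ne\eta$. Thus $\omega$ is nonconstant on every interval $(0,\varepsilon)$, and its image there is a connected metric space with at least two points, hence has cardinality at least continuum. I expect this counting step to be the most delicate point.

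For part (2), $B=\lambda\mathcal P+R$ is monotone, everywhere defined and norm-continuous. I will prove maximality as was done for $\mathcal P$: test a polar point $(z,z^*)$ against $(z+tu,B(z+tu))$, divide by $t$, and let $t\to0^\pm$ using continuity; this forces $z^*=Bz$. The domain condition is immediate because $\dom B=X$. For nonmaximality, take $(z,p)=(Lp_s,p_s)$ from part (1), which lies in the polar of $\gra(\mathcal A+\lambda\mathcal P)$. For any $(x,a+\lambda\mathcal Px+Rx)$ in $\gra(\mathcal A+B)$, the polar inequality for $(z,p)$ plus $\pair{z-x}{Rz-Rx}\ge0$ shows $(z,p+Rz)$ is in the polar. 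It is not in the graph since $z\notin\dom\mathcal A=\dom(\mathcal A+B)$.
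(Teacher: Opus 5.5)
Your proof is correct. Most of it coincides with the paper's argument:
\begin{itemize}
\item the same witness $p_s=\tfrac12(a^{1+s}+a^{-1-s})$;
\item the same pairing computation against the sum-graph points $(La,a+\lambda r(a)g)$;
\item the same non-membership argument via $\pair{Lp_s}{g}=0$;
\item the same use of Zorn's lemma, and the same argument for part (2).
\end{itemize}
Your lower bound passes through the endpoint, $\|\omega(|t|-1)-\omega(s)\|\le(|t|-1)+\|\omega(s)-\eta\|$. The paper instead uses the direct Lipschitz bound $\|\omega(u)-\omega(s)\|\le|u-s|$, which gives a uniform margin $3\lambda/4$ for $s\le\sqrt\lambda/2$. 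Either suffices, since only nonnegativity is needed.

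The genuine difference is how you produce continuum many pairwise incompatible polar points.

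The paper fixes one $s$ with $\omega(s)\ne0$ and rescales, setting $p_c=(1+c)p_s$ for $|c|\le\delta=\min\{1,\lambda/(16S)\}$. A small perturbation estimate keeps these points in the polar: the squared norm grows by at most $4S|c|+Sc^2$. The identity $\pair{Lp_c-Lp_d}{p_c-p_d}=-(c-d)^2\|\omega(s)\|_{H_0}^2$ then makes them pairwise incompatible. So the continuum is simply the interval $[-\delta,\delta]$.

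You vary $s$ instead. This needs no perturbation estimate, because every $p_s$ with small $s$ is already a witness. The price is a second use of Eq.~(\ref{eq:abs-endpoint-exclusion}): it gives $\omega(s)\ne\eta$, so $\omega$ is nonconstant on every $(0,\varepsilon)$. You also need a topological count: a connected metric space with two distinct points has at least continuum cardinality, for instance because the distance to one point maps it onto a set containing a nondegenerate interval. That count is valid, so your argument is complete.

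The paper's route is more explicit and quantitative. It needs only $\omega(s)\ne0$, and for $\lambda>0$ it does not use the exclusion condition at all. Yours shows that the incompatible witnesses can be read off along the curve itself, with distinct Hilbert profiles $(0,\omega(s))$ rather than points on a single ray.
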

\begin{proof}
\noindent\textit{(1)} Fix $\lambda>0$. Choose $0<s\le\frac{\sqrt\lambda}{2}$ with $\omega(s)\ne0$, which is possible because $\omega(s)\to\eta\ne0$. By Assumption~\ref{ass:construction}\textup{(\ref{ass:realization})}, the element
\[
p_s=\frac{a^{1+s}+a^{-1-s}}2
\]
satisfies $r(p_s)=0$ and $Mp_s=(0,\omega(s))$. For $a\in D$, put $t=r(a)$ and $u=|t|-1>0$. Eq. (\ref{eq:abs-pairing}) gives
\begin{equation}
\begin{aligned}
\pair{Lp_s-La}{p_s-a-\lambda t g}
&=(1+\lambda)(1+u)^2-1-\|\omega(u)-\omega(s)\|_{H_0}^2\\
&\ge\lambda-s^2+2(1+\lambda+s)u+\lambda u^2
>\frac{3\lambda}{4}.
\end{aligned}
\label{eq:cor-small-perturbation}
\end{equation}
Here we used the Lipschitz bound $\|\omega(u)-\omega(s)\|_{H_0}\le|u-s|$.

Set $\delta=\min\{1,\frac{\lambda}{16S}\}$ and, for $|c|\le\delta$, define $p_c=(1+c)p_s$ and $z_c=Lp_c$. Replacing $p_s$ by $p_c$ in the first line of Eq. (\ref{eq:cor-small-perturbation}) increases the squared norm by at most
\[
4S|c|+Sc^2\le5S\delta\le\frac{5\lambda}{16}.
\]
Thus $(z_c,p_c)$ is monotonically related to $\gra(\mathcal A+\lambda\mathcal P)$. Since $\pair{z_c}{g}=r(p_c)=0$, whereas $|\pair{x}{g}|>1$ on $\dom\mathcal A$, it lies outside this graph. For $c\ne d$, Eq. (\ref{eq:abs-pairing}) also gives
\[
\pair{z_c-z_d}{p_c-p_d}=-(c-d)^2\|\omega(s)\|_{H_0}^2<0.
\]
Each graph obtained by adjoining one of these points has a maximally monotone extension by Zorn's lemma. No monotone graph contains two of the displayed points, so these extensions are distinct. The operator $\lambda\mathcal P$ is maximally monotone by the direct argument in Theorem~\ref{thm:endpoint-criterion}, and its full domain gives Eq. (\ref{eq:original-cq}). At $\lambda=0$, the sum equals $\mathcal A$.

\noindent\textit{(2)} The map $B$ is continuous and monotone. If $(z,q)$ is monotonically related to $\gra B$, testing at $z+tv$, with $v\in X$ and $t>0$, gives $\pair{v}{q-B(z+tv)}\le0$. Letting $t\downarrow0$ and replacing $v$ by $-v$ yields $q=Bz$. Hence $B$ is maximally monotone. Choose a point $(z,p)$ from part (1), so $z\notin\dom\mathcal A$. For every $(x,a)\in\gra\mathcal A$,
\[
\pair{z-x}{p+Rz-a-\lambda\mathcal Px-Rx}
=\pair{z-x}{p-a-\lambda\mathcal Px}+\pair{z-x}{Rz-Rx}\ge0.
\]
Therefore $(z,p+Rz)$ belongs to the monotone polar of the sum and not to its graph. Both factors are maximally monotone, and $\dom B=X$ proves Eq. (\ref{eq:original-cq}).
\end{proof}

Thus arbitrarily small positive rank-one perturbations destroy maximality for the same first operator. The second conclusion also allows bounded skew linear additions, such as $Rx=\pair{x}{f}h-\pair{x}{h}f$ for $f,h\in X^*$, since $\pair{x}{Rx}=0$. For second operators of any prescribed finite rank $m\ge1$ on $c_0$, choose linearly independent $g,f_1,\ldots,f_{m-1}$ and take $Rx=\sum_{j=1}^{m-1}\pair{x}{f_j}f_j$. The resulting operator $\lambda\mathcal P+R$ has rank $m$.

\subsection{Two explicit counterexamples}
The preceding families allow different choices of the curve and linear maps. Next, we specify these data completely on $c_0$ and verify the assumptions of Theorem~\ref{thm:endpoint-criterion}. Theorem~\ref{lem:surjection-transport} then gives the counterexample on standard $\ell^1$ through a specified surjection.

\subsubsection{A counterexample on $c_0$}
We keep the block maps in Eqs. (\ref{eq:D4}) and (\ref{eq:D5}) and choose the following curve of block sums.

\begin{definition}[The curve of block sums]
For $s>0$ and $n\ge1$, write
\begin{equation}
\begin{aligned}
\omega_s(n)&=\frac{\max\{1-sn^{\frac{1}{4}},0\}}{4n},\\
W(s)&=\sum_{n\ge1}\omega_s(n)^2,\\
\sigma&=\sum_{n\ge1}\frac1{16n^2}=\frac{\pi^2}{96}.
\end{aligned}
\label{eq:D6}
\end{equation}
Each $\omega_s$, $s>0$, has finite support. Set $\omega_0=(\frac{1}{4n})_{n\ge1}\in\ell^2\setminus\ell^1$. Define
\begin{equation}
\begin{aligned}
J&=(-\infty,-1)\cup(1,\infty),\\
F(t)&=(\sgn t,\omega_{|t|-1})\in\ell^1(\Nzero)\quad(t\in J),\\
D&=\{a\in\ell^1(I):r(a)\in J,\ ma=F(r(a))\}.
\end{aligned}
\label{eq:D7}
\end{equation}

\end{definition}

\begin{definition}[The two operators on $c_0(I)$]
The operators and kernel are
\begin{equation}
\begin{aligned}
\gra A&=\{(La,a):a\in D\},\\
K&=\{k\in\ell^1(I):mk=0,\ r(k)=0\},\\
Px&=\pair{x}{g}g\quad(x\in Y).
\end{aligned}
\label{eq:D8}
\end{equation}
For each $t\in J$, a vector satisfying $r(a^t)=t$ and $ma^t=F(t)$ is
\begin{equation}
a^t=-t e_{0,1}+(t+\sgn t)e_{0,3}
       +\sum_{n\ge1}\omega_{|t|-1}(n)e_{n,1}.
\label{eq:D9}
\end{equation}
\end{definition}

% Coordinate verification follows the definitions in the first example.
\label{sec:supporting}

Now, we verify the hypotheses of Theorem~\ref{thm:endpoint-criterion} for the operators in Eq. (\ref{eq:D8}). Lemmas~\ref{lem:coordinate-pairing} and~\ref{lem:coordinate-annihilator} establish \textup{(\ref{ass:bilinear})} and \textup{(\ref{ass:annihilator})} of Assumption~\ref{ass:construction}. Lemma~\ref{lem:coordinate-profile} verifies the curve and fibre conditions \textup{(\ref{ass:curve-limit})}--\textup{(\ref{ass:realization})} and proves the exclusion condition in Eq. (\ref{eq:abs-endpoint-exclusion}).

\begin{lemma}[The coordinate pairing identity]
\label{lem:coordinate-pairing}
The maps $m:\ell^1(I)\to\ell^1(\Nzero)\subset\ell^2(\Nzero)$ and $E,L:\ell^1(I)\to c_0(I)$ are bounded. For all $a,c\in\ell^1(I)$, Eqs. (\ref{eq:C1}) and (\ref{eq:C2}) below hold.
\end{lemma}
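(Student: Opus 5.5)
The plan is a direct coordinate computation, carried out block by block. I expect Eqs. (\ref{eq:C1}) and (\ref{eq:C2}) to be, respectively, the symmetric pairing identity
\[
\pair{Ea}{c}+\pair{Ec}{a}=2\,(ma,mc)_{\ell^2(\Nzero)}
\]
and its consequences for $L$ recorded in Eq. (\ref{eq:abs-pairing}), namely $\pair{La}{g}=r(a)$ and
\[
\pair{La-Lc}{a-c}=|r(a)-r(c)|^2-\|ma-mc\|_2^2 .
\]
These are exactly the specialization of (\ref{ass:bilinear}) to the block maps with $M=m$, so the lemma amounts to verifying that hypothesis concretely.

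\emph{Boundedness.} For $a\in\ell^1(I)$ I will use the bounds
\[
|(ma)_b|\le\sum_j|a_{b,j}|,\qquad |(Ea)_{b,j}|\le 2\sum_k|a_{b,k}|,\qquad |r(a)|\le\|a\|_1 .
\]
These give $\|ma\|_2\le\|ma\|_1\le\|a\|_1$ and $\|Ea\|_\infty\le 2\|a\|_1$. The map $E$ sends finitely supported vectors to finitely supported vectors. Since finitely supported vectors are dense in $\ell^1(I)$ and $c_0(I)$ is closed in $\ell^\infty(I)$, it follows that $E$ maps $\ell^1(I)$ into $c_0(I)$. Then $L=-E+r(\cdot)h$ is bounded, because $h=-e_{0,1}-e_{0,2}\in c_0(I)$.

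\emph{Pairing identity.} Fix a block $b$ and write $a_j=a_{b,j}$, $c_j=c_{b,j}$. The block's contribution to $\pair{Ea}{c}+\pair{Ec}{a}$ is
\[
\sum_j c_j\Bigl(a_j+2\sum_{k>j}a_k\Bigr)+\sum_j a_j\Bigl(c_j+2\sum_{k>j}c_k\Bigr)
=2\sum_j a_jc_j+2\sum_{j<k}(c_ja_k+a_jc_k).
\]
The right-hand side is $2\bigl(\sum_j a_j\bigr)\bigl(\sum_k c_k\bigr)=2(ma)_b(mc)_b$. Summing over $b$ gives the claimed identity.

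The only genuine point to check, and the step I expect to be the main obstacle, is that these rearrangements are legitimate. I will justify them by absolute convergence. The double series $\sum_b\sum_{j,k}|c_{b,j}||a_{b,k}|$ is bounded by $\|a\|_1\|c\|_1$, so Fubini for series allows regrouping the double sums both inside each block and across blocks.

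\emph{Consequences for $L$.} A direct computation gives $mg=0$ and $Eg=e_{0,1}-2e_{0,2}+e_{0,2}\cdot$(correction) $=-e_{0,1}-e_{0,2}=h$. From these I will derive the $L$-identities exactly as in the first lines of the proof of Theorem~\ref{thm:endpoint-criterion}. Setting $c=g$ in the pairing identity gives $\pair{Ea}{g}=-r(a)$, hence $\pair{La}{g}=r(a)$. Setting $c=a$ gives $\pair{Ea}{a}=\|ma\|^2$. Expanding $\pair{La-Lc}{a-c}$ by linearity then yields
\[
\pair{La-Lc}{a-c}=r(a-c)^2-\|m(a-c)\|^2 .
\]
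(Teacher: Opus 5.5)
Your proposal is correct and follows the paper's proof essentially step for step. It uses the bounds $\|ma\|_1\le\|a\|_1$ and $\|Ea\|_\infty\le 2\|a\|_1$ together with density of finitely supported vectors, then the blockwise diagonal-plus-triangular expansion justified by absolute convergence, and finally the specializations $c=g$ (using $mg=0$, $h=Eg$, $r(g)=0$) and $c=a$ to obtain the three identities in Eq.~(\ref{eq:C2}).

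The only blemish is your garbled intermediate formula for $Eg$. Here $h=Eg$ is the definition in Eq.~(\ref{eq:D5}), and directly $(Eg)_{0,1}=1-2=-1$ and $(Eg)_{0,2}=-1$. You should also state $r(g)=\pair{h}{g}=0$ explicitly, since it is what reduces $\pair{La}{g}=r(a)+r(a)r(g)$ to $r(a)$.
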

\begin{proof}
\label{sec:C1}
For $a\in\ell^1(I)$, $\|ma\|_1\le\|a\|_1$ and $\|Ea\|_\infty\le2\|a\|_1$. If $a$ has finite support, then $Ea$ also has finite support. Indeed, only finitely many blocks contain nonzero coordinates, and within each such block no output occurs after the last nonzero coordinate of $a$. Since finitely supported vectors are dense in $\ell^1(I)$, the bound on $E$ then implies $Ea\in c_0(I)$ for every $a\in\ell^1(I)$. Thus $L$ also maps $\ell^1(I)$ boundedly into $c_0(I)$. 

For $a,c\in\ell^1(I)$, absolute convergence permits exchanging the double sums. In each block the diagonal terms and the two triangular off-diagonal sums give
\begin{equation}
\pair{Ea}{c}+\pair{Ec}{a}
 =2\sum_b\Big(\sum_j a_{b,j}\Big)\Big(\sum_j c_{b,j}\Big)
 =2(ma,mc)_{\ell^2}.
\label{eq:C1}
\end{equation}
The absolute sum of all products is at most $\|a\|_1\|c\|_1$ before the factor $2$, so the identity holds for all $a,c\in\ell^1(I)$. Since $mg=0$, $h=Eg$, and $r(g)=0$, Eq. (\ref{eq:C1}) implies $\pair{Ea}{g}=-r(a)$. Consequently
\begin{equation}
\begin{aligned}
\pair{La}{a}&=r(a)^2-\|ma\|_2^2,\\
\pair{La}{g}&=r(a),\\
\pair{La-Lc}{a-c}&=(r(a)-r(c))^2-\|ma-mc\|_2^2.
\end{aligned}
\label{eq:C2}
\end{equation}
\end{proof}

\begin{lemma}[The annihilator of $K$ in $c_0(I)$]
\label{lem:coordinate-annihilator}
For $K$ in Eq. (\ref{eq:D8}),
\[
\{z\in c_0(I):\pair{z}{k}=0\ \text{for every }k\in K\}=\R h.
\]
\end{lemma}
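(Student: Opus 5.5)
We prove the two inclusions separately.

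\emph{The inclusion $\R h\subseteq K^\perp$.} Since $h=-e_{0,1}-e_{0,2}$, we have $\pair{h}{k}=r(k)$ for every $k\in\ell^1(I)$. Thus $\pair{h}{k}=0$ for every $k\in K$ by the definition of $K$, and $\R h$ lies in the annihilator.

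\emph{The reverse inclusion.} Let $z\in c_0(I)$ annihilate $K$. We test $z$ against explicit elements of $K$ in the following order.

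First, fix a positive block $b\ge1$ and indices $j\neq k$. The vector $e_{b,j}-e_{b,k}$ has zero block sums and its block-zero coordinates vanish, so $r$ is zero on it and it lies in $K$. Testing gives $z_{b,j}=z_{b,k}$. Hence $z$ is constant on block $b$. Since $z\in c_0(I)$ and the block contains infinitely many coordinates, that constant must be zero.

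Second, in block zero take $j,k\ge3$ with $j\ne k$. The vector $e_{0,j}-e_{0,k}$ has block sum zero and does not touch the coordinates $(0,1),(0,2)$, so $r$ vanishes on it and it lies in $K$. Testing shows that $z$ is constant on $\{(0,j):j\ge3\}$, and the $c_0$ condition again makes this constant zero.

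Third, $g=e_{0,1}-e_{0,2}$ lies in $K$, since $mg=0$ and $r(g)=-1+1=0$. Testing gives $z_{0,1}=z_{0,2}$.

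Combining the three steps, $z=z_{0,1}(e_{0,1}+e_{0,2})=-z_{0,1}h\in\R h$.

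The main point of care is the membership of the test vectors in $K$: each must have zero sum in every block and zero value under $r$. The only delicate case is that test vectors in block zero must avoid coordinates $1$ and $2$, apart from $g$ itself. Beyond this check, the argument is a direct finite computation together with the $c_0$ decay.
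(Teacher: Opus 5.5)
Your proof is correct and follows the paper's argument essentially step for step. You use the same three families of test vectors in $K$: differences within positive blocks, tail differences $e_{0,j}-e_{0,k}$ with $j,k\ge3$ in block zero, and $g$ itself. You use the same $c_0$-decay to force the blockwise constants to zero, and the identity $\pair{h}{k}=r(k)$ for the inclusion $\R h\subseteq$ annihilator.
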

\begin{proof}
Let $z\in c_0(I)$ annihilate $K$. In a block $b\ge1$, every difference $e_{b,j}-e_{b,k}$ is in $K$. Thus $z$ is constant in that entire block. Since $z\in c_0(I)$, this constant must be zero. In block zero, differences between indices $j,k\ge3$ belong to $K$, so the common tail value is also zero. Finally, $g=e_{0,1}-e_{0,2}$ belongs to $K$, forcing $z_{0,1}=z_{0,2}$. It follows that $z$ is a scalar multiple of $h$. Conversely, $\pair{h}{k}=r(k)=0$ proves that every multiple of $h$ annihilates $K$.
\end{proof}

\begin{lemma}[Properties of the curve and its parameter fibres]
\label{lem:coordinate-profile}
The map $s\mapsto\omega_s$ from $(0,\infty)$ to $\ell^2(\N)$ satisfies Eq. (\ref{eq:abs-tail}) with $\eta=\omega_0$, $S=\sigma<\frac{1}{8}$ and a Lipschitz constant strictly less than one. For every $t\in J$, the vector $a^t$ in Eq. (\ref{eq:D9}) belongs to $D$, and its parameter fibre is $a^t+K$. No $p\in\ell^1(I)$ and $\tau\in[-1,1]$ satisfy $r(p)=\tau$ and $mp=(\tau,\omega_0)$.
\end{lemma}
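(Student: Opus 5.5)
We verify the four claims in turn: the tail condition with the stated constants, the Lipschitz bound, the fibre description, and the exclusion.

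\emph{Tail condition.} For $s>0$ we have $0\le\omega_s(n)\le\frac1{4n}=\omega_0(n)$ for every $n$. Hence
\[
\|\omega_s\|_2^2\le\sigma=\|\omega_0\|_2^2 .
\]
Moreover $\omega_s(n)\to\omega_0(n)$ pointwise as $s\downarrow0$, so dominated convergence, with dominating sequence $\frac1{16n^2}$, gives $\omega_s\to\omega_0$ in $\ell^2$. For the constants, $\sigma=\pi^2/96$, and $\pi^2<12$ gives $\sigma<\frac18$; clearly $0<\sigma<1$.

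\emph{Lipschitz bound.} The map $s\mapsto\max\{1-sn^{1/4},0\}$ is $n^{1/4}$-Lipschitz. Therefore
\[
|\omega_s(n)-\omega_u(n)|\le\frac{n^{1/4}}{4n}\,|s-u|,
\qquad
\|\omega_s-\omega_u\|_2^2\le|s-u|^2\sum_{n\ge1}\frac{1}{16n^{3/2}} .
\]
Using $\sum_n n^{-3/2}=\zeta(3/2)<3$, the Lipschitz constant is at most $\frac{\sqrt3}{4}<1$.

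\emph{Fibres.} Each $\omega_s$ with $s>0$ vanishes for $n\ge s^{-4}$, so it is finitely supported. Hence $a^t\in\ell^1(I)$ for every $t\in J$. From the definitions, $r(a^t)=t$. In block $0$, $(ma^t)_0=-t+(t+\sgn t)=\sgn t$, and for $n\ge1$, $(ma^t)_n=\omega_{|t|-1}(n)$. Thus $ma^t=F(t)$ and $a^t\in D$. If $a$ is any vector with $r(a)=t$ and $ma=F(t)$, then $a-a^t\in\ker m\cap\ker r=K$. Conversely $a^t+K$ has the same $r$ and $m$ values. So the fibre is exactly $a^t+K$.

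\emph{Exclusion.} This is the only nonroutine point, and it is short. Suppose $p\in\ell^1(I)$ and $mp=(\tau,\omega_0)$. The block-sum bound gives
\[
\sum_{n\ge1}|(mp)_n|\le\sum_{n\ge1}\sum_{j\ge1}|p_{n,j}|\le\|p\|_1<\infty .
\]
But $\sum_{n\ge1}\frac1{4n}=\infty$, a contradiction. So no such $(p,\tau)$ exists; in fact $\tau$ is never used.

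The main obstacle is bookkeeping rather than any single hard estimate. One has to confirm that the $\ell$-Lipschitz requirement of Definition~\ref{def:construction-graph} is satisfied with $\ell<1$. One also has to use the fact, implicit in the statement, that $H_0=\ell^2(\N)$ carries the block coordinates $n\ge1$ while block $0$ supplies the $\R$ component, so that $m$ plays the role of $M$.
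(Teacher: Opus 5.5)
Your proof is correct and follows the paper's argument essentially line by line. It uses the same domination $0\le\omega_s(n)\le\frac1{4n}$ with dominated convergence, the same coordinatewise Lipschitz estimate with squared constant $\frac1{16}\sum_n n^{-3/2}<\frac3{16}$, the direct check of $r(a^t)$ and $ma^t$, and exclusion via $mp\in\ell^1(\Nzero)$ versus $\omega_0\notin\ell^1$. The paper additionally checks that $F$ is $1$-Lipschitz across the two branches of $J$. That check is not required by the statement as posed, and the same fact is reproved anyway in Theorem~\ref{thm:endpoint-criterion}.
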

\begin{proof}
\label{sec:C2}
For each $s>0$, $\omega_s(n)=0$ whenever $n\ge s^{-4}$. Also $0\le\omega_s(n)\le\frac{1}{4n}$. Thus, $W(s)\le\sigma<\frac{1}{8}$, and dominated convergence in $\ell^2$ gives $\omega_s\to\omega_0$ and $W(s)\to\sigma$ as $s$ decreases to zero. The comparison vector $\omega_0$ is not in $\ell^1$ by divergence of the harmonic series.

Since $v\mapsto\max\{v,0\}$ is $1$-Lipschitz on $\R$,
\begin{equation}
\|\omega_s-\omega_q\|_2^2\le c|s-q|^2,\qquad
c=\frac1{16}\sum_{n\ge1}n^{-\frac{3}{2}}<\frac3{16}<1.
\label{eq:C3}
\end{equation}
The strict numerical bound follows from $\sum_{n\ge1}n^{-\frac{3}{2}}<1+\int_1^\infty x^{-\frac{3}{2}}\,dx=3$. For $t,u$ on the same branch of $J$, Eq. (\ref{eq:C3}) gives $\|F(t)-F(u)\|_2\le|t-u|$. For $t=1+s$ and $u=-1-q$, the squared distance is at most $4+c(s-q)^2\le(2+s+q)^2$, and the opposite orientation is identical. Thus $F$ is $1$-Lipschitz on all of $J$.

The finitely supported vector $a^t$ in Eq. (\ref{eq:D9}) satisfies $r(a^t)=t$ and $ma^t=F(t)$, proving nonemptiness for every $t\in J$. The parameter fibre $\{a:r(a)=t,\ ma=F(t)\}$ is precisely $a^t+K$. For every $p\in\ell^1(I)$, the vector $mp$ is in $\ell^1(\Nzero)$. Since $\omega_0\notin\ell^1(\N)$, the last assertion follows.
\end{proof}

% The first counterexample follows its coordinate lemmas.

The preceding lemmas establish all the hypotheses of Theorem~\ref{thm:endpoint-criterion}, including Eq. (\ref{eq:abs-endpoint-exclusion}). Therefore, applying that theorem to the operators in Eq. (\ref{eq:D8}) gives the following counterexample. 

\begin{example}[A counterexample on $c_0$]\label{thm:C}
Let $I=\Nzero\times\N$ and $Y=c_0(I)$ with its usual supremum norm and continuous dual $Y^*=\ell^1(I)$. Let $A:Y\rightrightarrows Y^*$ and the bounded positive rank-one map $P:Y\to Y^*$ be defined by Eq. (\ref{eq:D8}). Then 
\begin{enumerate}
\item\label{thm:C-maximality} $A$ and $P$ are maximally monotone in $Y\times Y^*$.
\item\label{thm:C-domain} $\dom P=Y$ and $\dom A\ne\varnothing$, so
$\dom A\cap\operatorname{int}(\dom P)\ne\varnothing$.
\item\label{thm:C-bound} Every $(x,a)\in\gra A$ satisfies $\|x\|_\infty>\frac{1}{2}$ and
$\pair{x}{a}\ge-2\sigma\|x\|_\infty$, where $\sigma=\frac{\pi^2}{96}<\frac{1}{8}$.
\item\label{thm:C-pairing} Every $(x,b)\in\gra(A+P)$ satisfies
$\pair{x}{b}>1-\sigma>\frac{7}{8}$.
\item\label{thm:C-sum} $(0,0)$ is monotonically related to the entire graph of $A+P$ and
does not belong to that graph. Thus $A+P$ is not maximally monotone.
\end{enumerate}
\end{example}
\begin{proof}
\noindent\textit{\textup{(\ref{thm:C-maximality})} and \textup{(\ref{thm:C-domain})}. Maximality and the domain intersection condition.}
Apply Theorem~\ref{thm:endpoint-criterion} with $X=Y$, $M=m$, $H_0=\ell^2(\N)$, $\omega(s)=\omega_s$, $\eta=\omega_0$ and $S=\sigma$. Lemmas~\ref{lem:coordinate-pairing}--\ref{lem:coordinate-profile}, together with $g\ne0$ and $mg=0$ from Eq. (\ref{eq:D5}), verify Assumption~\ref{ass:construction}. The last assertion of Lemma~\ref{lem:coordinate-profile} gives Eq. (\ref{eq:abs-endpoint-exclusion}), so the theorem proves maximal monotonicity of both $A$ and $P$. 

\label{sec:C6}
Since $\|g\|_1=2$, we have $\|P\|\le4$ and $Pe_{0,1}=g\ne0$. The vector $a^2=-2e_{0,1}+3e_{0,3}$ in Eq. (\ref{eq:D9}) belongs to $D$ and gives $La^2=-6e_{0,1}-8e_{0,2}-3e_{0,3}$. This means $La^2\in\dom A$, and $\dom P=Y$ yields $\dom A\cap\operatorname{int}(\dom P)\ne\varnothing$.

\noindent\textit{\textup{(\ref{thm:C-bound})}. Bounds on $\gra A$.}
\label{sec:C5}
For every $a\in D$, $t=r(a)$ satisfies $|t|>1$. By Eq. (\ref{eq:C2}) and $\|g\|_1=2$,
\begin{equation}
\begin{aligned}
\|La\|_\infty&\ge\frac{|t|}{2}>\frac{1}{2},\\
\pair{La}{a}&=t^2-1-W(|t|-1)>-\sigma.
\end{aligned}
\label{eq:C9}
\end{equation}
Eq. (\ref{eq:C9}) gives $A(0)=\varnothing$ and, for every $a\in D$,
\[
\frac{\max\{0,-\pair{La}{a}\}}{\|La\|_\infty}
\le \frac{\sigma}{\|La\|_\infty}
<2\sigma.
\]
Therefore $\pair{La}{a}\ge-2\sigma\|La\|_\infty$ for every $a\in D$, and taking the supremum gives $\mathcal V(\gra A)\le2\sigma<\frac{1}{4}$.

\noindent\textit{\textup{(\ref{thm:C-pairing})} and \textup{(\ref{thm:C-sum})}. Nonmaximality of $A+P$.}
To verify nonmaximality of $A+P$, take an arbitrary point of $\gra(A+P)$. By Eqs. (\ref{eq:D8}) and (\ref{eq:C2}), it has the form $(La,a+tg)$ with $a\in D$ and $t=r(a)$. Thus Eq. (\ref{eq:C2}) gives
\begin{equation}
\pair{La}{a+tg}=2t^2-1-W(|t|-1)>1-\sigma>\frac{7}{8}.
\label{eq:C11}
\end{equation}
Eq. (\ref{eq:C11}) shows that $(0,0)$ is monotonically related to every point of $\gra(A+P)$. Since $A$ and $P$ are monotone, $\gra(A+P)\cup\{(0,0)\}$ is monotone. Eq. (\ref{eq:C9}) gives $0\notin\dom(A+P)$, so this union strictly contains $\gra(A+P)$. Therefore $A+P$ is not maximally monotone.
\end{proof}

% Consequences of the first counterexample.
\label{sec:domain-comparison}

\subsubsection{A counterexample on standard $\ell^1$}

Next, we transfer the counterexample of Example~\ref{thm:C} from $Y=c_0(I)$ to standard $\ell^1$ through a bounded linear surjection.
% Theorem~\ref{lem:surjection-transport} will establish the maximality of the transferred operators, and the adjoint identity will preserve the pairing inequality that proves nonmaximality of their sum.

% The following construction makes explicit the standard representation of a separable Banach space as a quotient of $\ell^1$ \cite[Section~1.12]{megginson1998}.

Let $Z=\ell^1(\N)$ and $Z^*=\ell^\infty(\N)$, with their usual pairing. To construct the required surjection, fix an enumeration $(q_n)$ of all finitely supported rational vectors in the closed unit ball of $Y$.
\begin{definition}[The operators on standard $\ell^1$]
Define
\begin{equation}
\begin{aligned}
Qu&=\sum_{n\ge1}u_nq_n,\\
(Q^*a)_n&=\pair{q_n}{a},\\
\gra T&=\{(u,Q^*a):a\in D,\ Qu=La\},\\
f&=Q^*g,\\
Bu&=\pair{u}{f}f.
\end{aligned}
\label{eq:D10}
\end{equation}
\end{definition}
The next lemma provides the bounded preimages needed in Theorem~\ref{lem:surjection-transport}, and no bounded linear right inverse of $Q$ is required.

\begin{lemma}[The specified quotient onto $c_0$]
\label{lem:specified-quotient}
The map $Q:Z=\ell^1(\N)\to Y=c_0(I)$ of Eq. (\ref{eq:D10}) is a bounded surjection with $\|Q\|=1$. Its adjoint satisfies $\|Q^*a\|_\infty=\|a\|_1$ for every $a\in\ell^1(I)$. For the fixed enumeration $(q_n)$, a map $\mathcal R:Y\to Z$ can be specified with
\[
Q\mathcal R(x)=x,\qquad \|\mathcal R(x)\|_1\le2\|x\|_\infty.
\]
\end{lemma}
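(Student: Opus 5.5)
The plan has four parts: boundedness and the norm of $Q$, the adjoint identity, and then an explicit preimage map $\mathcal R$ built by a greedy approximation.

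\emph{Boundedness and $\|Q\|=1$.} Since $\|q_n\|_\infty\le1$, the series $\sum_n u_nq_n$ converges absolutely in $Y$. Hence $\|Qu\|_\infty\le\|u\|_1$ and $\|Q\|\le1$. For the reverse bound, the enumeration contains some $q_n$ with norm one, for instance $e_{0,1}$, and $Q$ maps the corresponding unit vector $e_n$ to it. This gives $\|Q\|=1$.

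\emph{The adjoint identity.} The formula $(Q^*a)_n=\pair{q_n}{a}$ follows from continuity of the pairing applied to the absolutely convergent series. This also gives $\|Q^*a\|_\infty\le\|a\|_1$. For the reverse inequality, fix $a\in\ell^1(I)$ and $\epsilon>0$. Choose a finite set $F\subset I$ that carries all but $\epsilon$ of $\|a\|_1$. The vector $x=\sum_{i\in F}\sgn(a_i)e_i$ is finitely supported, rational, and lies in the closed unit ball, so $x=q_n$ for some $n$. Then $\pair{q_n}{a}\ge\|a\|_1-\epsilon$, and letting $\epsilon\to0$ gives $\|Q^*a\|_\infty=\|a\|_1$.

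\emph{The preimage map $\mathcal R$.} I will construct it by a greedy scheme, which makes it a specified, though nonlinear, map. If $x=0$, set $\mathcal R(0)=0$. Otherwise set $x_0=x$ and proceed recursively. Given a nonzero remainder $x_k$, let $n_k$ be the least index satisfying
\[
\bigl\|x_k/\|x_k\|_\infty-q_{n_k}\bigr\|_\infty\le\tfrac12 .
\]
Such an index exists because the $q_n$ are dense in the unit ball of $Y$: finitely supported vectors are dense in $c_0(I)$, and rational coordinates can be chosen with modulus at most the original coordinates, which keeps them in the ball. Put $c_k=\|x_k\|_\infty$ and $x_{k+1}=x_k-c_kq_{n_k}$. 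Then $\|x_{k+1}\|_\infty\le c_k/2$, so $c_k\le2^{-k}\|x\|_\infty$. If some remainder vanishes, stop there.

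Define $\mathcal R(x)=\sum_k c_ke_{n_k}$, adding coefficients if an index repeats. Its norm satisfies
\[
\|\mathcal R(x)\|_1\le\sum_k c_k\le2\|x\|_\infty .
\]
Telescoping gives $Q\mathcal R(x)=\sum_k c_kq_{n_k}=\lim_k(x-x_{k+1})=x$. This also yields surjectivity of $Q$. It shows as well that the hypothesis of Theorem~\ref{lem:surjection-transport} holds with $C_Q=2$.

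The main obstacle is confirming density of the enumerated set in the closed unit ball and making the selection canonical. Taking the least admissible index and fixing the enumeration settles the latter. Everything else reduces to routine estimates.
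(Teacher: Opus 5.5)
Your proposal is correct and follows the paper's own proof essentially step by step. It uses absolute convergence together with coordinate unit vectors among the $q_n$ to get $\|Q\|=1$, finite sign vectors as test elements to get $\|Q^*a\|_\infty=\|a\|_1$, and the least-index greedy residual scheme with $c_k=\|x_k\|_\infty$ halving at each step to produce $\mathcal R$ with $\|\mathcal R(x)\|_1\le2\|x\|_\infty$. The only difference is cosmetic: you use the non-strict tolerance $\le\frac12$ where the paper uses $<\frac12$, and this yields the same halving estimate.
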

\begin{proof}
\label{sec:L1}
Finitely supported rational vectors in the closed unit ball of $Y$ are countable and dense in that ball: truncate a $c_0$ vector and approximate each retained coordinate by a rational inside $[-1,1]$. Fix an enumeration of all these vectors as $(q_n)$. For $u\in Z=\ell^1(\N)$, the sum defining $Qu$ converges absolutely in the Banach space $Y$, and $\|Qu\|_\infty\le\|u\|_1$. Coordinate unit vectors occur among the $q_n$, so $\|Q\|=1$.

For an arbitrary $x\in Y$, set $r_0=x$. If $r_k\ne0$, choose the least index $n_k$ for which $q_{n_k}$ satisfies $\|q_{n_k}-\frac{r_k}{\|r_k\|_\infty}\|_\infty<\frac{1}{2}$ and set
\begin{equation}
c_k=\|r_k\|_\infty,\qquad r_{k+1}=r_k-c_kq_{n_k}.
\label{eq:L1}
\end{equation}
If a residual vanishes, stop. Otherwise $\|r_k\|_\infty\le2^{-k}\|x\|_\infty$ and $\sum_k c_k\le2\|x\|_\infty$. Define $u_n=\sum_{k:n_k=n}c_k$. Repeated indices are combined. Positivity of the $c_k$ and summability give $u\in\ell^1$ with $\|u\|_1=\sum_k c_k\le2\|x\|_\infty$. Telescoping Eq. (\ref{eq:L1}), then absolute convergence under regrouping, proves
\begin{equation}
Qu=x,\qquad \|u\|_1\le2\|x\|_\infty.
\label{eq:L2}
\end{equation}
For $x=0$, use $u=0$. This proves surjectivity with the asserted lifting bound.

\label{sec:L2}
Absolute convergence gives $\pair{u}{Q^*a}=\pair{Qu}{a}$ and $\|Q^*a\|_\infty\le\|a\|_1$. For each finite subset of $I$, its sign vector for $a$ is a finitely supported rational vector of norm at most one and occurs in the enumeration. Taking these finite sign tests proves
\begin{equation}
\|Q^*a\|_\infty=\|a\|_1\qquad(a\in\ell^1(I)).
\label{eq:L3}
\end{equation}
In particular, $Q^*$ is injective. The least-index choices in Eq. (\ref{eq:L1}) specify
\[
\mathcal R(x)=\sum_k c_k e_{n_k},\qquad \mathcal R(0)=0.
\]
The series converges in $\ell^1$ by the bound on $\sum_k c_k$, and Eq. (\ref{eq:L2}) gives both asserted properties.
\end{proof}

By Lemma~\ref{lem:specified-quotient}, $Q\mathcal R(La)=La$, so the solutions of $Qu=La$ are exactly $u=\mathcal R(La)+v$ with $v\in\ker Q$. Substituting this expression into Eq. (\ref{eq:D10}) gives
\begin{equation}
\gra T=\{(\mathcal R(La)+v,Q^*a):a\in D,\ v\in\ker Q\}.
\label{eq:L4}
\end{equation}
This formula includes every preimage of each point in $\dom A$. Therefore, we can apply Theorem~\ref{lem:surjection-transport} to the counterexample in Example~\ref{thm:C}. Together with the pairing identities below, this yields the following counterexample on standard $\ell^1$.

\begin{example}[A counterexample on standard $\ell^1$]\label{thm:L}
On $Z=\ell^1(\N)$ with its usual norm and continuous dual $Z^*=\ell^\infty(\N)$, let $T:Z\rightrightarrows Z^*$ and $B:Z\to Z^*$ be defined by Eq. (\ref{eq:D10}). Then:
\begin{enumerate}
\item\label{thm:L-maximality} $T$ is maximally monotone in the full dual pair $Z\times Z^*$.
\item\label{thm:L-partner} $B$ is nonzero, bounded, positive, rank-one, and maximally monotone,
with $\dom B=Z$. In particular, $\dom T\cap\operatorname{int}(\dom B)\ne\varnothing$.
\item\label{thm:L-bound} $T(0)=\varnothing$, and every $(u,b)\in\gra T$ satisfies
$\|u\|_1>\frac{1}{2}$ and $\pair{u}{b}\ge-2\sigma\|u\|_1$. Thus $\mathcal V(\gra T)\le2\sigma<\frac{1}{4}$.
\item\label{thm:L-pairing} Every $(u,c)\in\gra(T+B)$ satisfies
$\pair{u}{c}>1-\sigma>\frac{7}{8}$.
\item\label{thm:L-sum} $(0,0)$ belongs to the monotone polar of $\gra(T+B)$ and does
not belong to $\gra(T+B)$. Hence $T+B$ is not maximally monotone.
\end{enumerate}
\end{example}
\begin{proof}
\noindent\textit{\textup{(\ref{thm:L-maximality})} and \textup{(\ref{thm:L-partner})}. Maximality and the domain intersection condition.}
Eq. (\ref{eq:L4}) identifies $T$ with $Q^*AQ$, including every preimage under $Q$. Lemma~\ref{lem:specified-quotient} supplies the preimage bound with $C_Q=2$. Since $A$ is maximally monotone by Example~\ref{thm:C}, Theorem~\ref{lem:surjection-transport} proves maximal monotonicity of $T$ in $Z\times Z^*=\ell^1\times\ell^\infty$.

\label{sec:L4}
Let $f=Q^*g$. By Eq. (\ref{eq:L3}), $\|f\|_\infty=\|g\|_1=2$, so $f$ is nonzero. The map $B:u\mapsto\pair{u}{f}f$ is nonzero, linear, bounded with norm at most $4$, positive, and rank-one. Moreover, for every $u\in Z$,
\[
Bu=\pair{u}{Q^*g}Q^*g
   =Q^*(\pair{Qu}{g}g)
   =Q^*(P(Qu)).
\]
Thus $B=Q^*PQ$. Maximal monotonicity of $P$ in Example~\ref{thm:C} and Theorem~\ref{lem:surjection-transport} give maximal monotonicity of $B$. Its domain is all of $Z$, so $\operatorname{int}(\dom B)=Z$.

The vector $q_\circ=-(\frac{3}{4})e_{0,1}-e_{0,2}-(\frac{3}{8})e_{0,3}$ occurs as $q_{n_0}$. For $u_\circ=8e_{n_0}$, $Qu_\circ=La^2$. Thus $(u_\circ,Q^*a^2)\in\gra T$, and $\dom T\cap\operatorname{int}(\dom B)\ne\varnothing$.

\noindent\textit{\textup{(\ref{thm:L-bound})}. Bounds on $\gra T$.}
\label{sec:L3}
For $(u,Q^*a)\in\gra T$, we have $Qu=La$. Thus the adjoint identity, $\|Q\|=1$ and Eq. (\ref{eq:C9}) transfer the bounds for $A$ to $T$. Writing $t=r(a)$, we obtain
\begin{equation}
\begin{aligned}
\pair{u}{Q^*a}&=\pair{La}{a}
 =t^2-1-W(|t|-1)>-\sigma,\\
\|u\|_1&\ge\|Qu\|_\infty=\|La\|_\infty>\frac{1}{2}.
\end{aligned}
\label{eq:L6}
\end{equation}
Thus $T(0)=\varnothing$ and $\pair{u}{Q^*a}\ge-2\sigma\|u\|_1$ on the entire graph, including every $\ker Q$ translate. Consequently, $\mathcal V(\gra T)\le2\sigma<\frac{1}{4}$.

\noindent\textit{\textup{(\ref{thm:L-pairing})} and \textup{(\ref{thm:L-sum})}. Nonmaximality of $T+B$.}
\label{sec:L5}
For $(u,Q^*a)\in\gra T$, we have $Qu=La$ and $Bu=Q^*(P(La))$. The adjoint identity therefore transfers Eq. (\ref{eq:C11}) to $T+B$. With $t=r(a)$,
\begin{equation}
\begin{aligned}
\pair{u}{Q^*a+Bu}
 &=\pair{La}{a+P(La)}\\
 &=2t^2-1-W(|t|-1)>1-\sigma>\frac{7}{8}.
\end{aligned}
\label{eq:L7}
\end{equation}
Eq. (\ref{eq:L7}) shows that $(0,0)$ is monotonically related to every point of $\gra(T+B)$. Since $T$ and $B$ are monotone, $\gra(T+B)\cup\{(0,0)\}$ is monotone. Since $T(0)=\varnothing$, this union strictly contains $\gra(T+B)$. Therefore, $T+B$ is not maximally monotone.
\end{proof}

\subsection{Counterexamples on further Banach spaces}
\label{subsec:further-space-families}
Finally, we use the pullback theorem (Theorem~\ref{lem:surjection-transport}) and extension from closed subspaces to construct counterexamples on every Banach space containing a closed subspace or admitting a quotient isomorphic to $c_0$ or $\ell^1$. For quotients, the theorem transfers both operators from the preceding examples and preserves their maximality, the interior-domain condition and nonmaximality of the sum. For closed subspaces, we extend the first operator's dual values to the ambient space and extend the functional defining the rank-one partner. The proof verifies maximality directly and transfers the inequalities witnessing sum failure. Thus each space in these classes carries a counterexample with a bounded positive rank-one second operator defined on the whole space, without requiring the subspace to be complemented.

\begin{proposition}[Transfer to further Banach spaces]
\label{cor:space-transfer}
Let $X$ be a real Banach space satisfying at least one of the following conditions:
\begin{enumerate}
\item $X$ contains a closed subspace isomorphic to $c_0$ or $\ell^1$.
\item $X$ has a Banach quotient isomorphic to $c_0$ or $\ell^1$.
\end{enumerate}
Then there exist maximally monotone operators $A_X,P_X:X\rightrightarrows X^*$ such that $P_X$ is a bounded positive rank-one operator with full domain,
\[
\dom A_X\cap\operatorname{int}(\dom P_X)\ne\varnothing,
\]
and $A_X+P_X$ is not maximally monotone.
For these same operators, $A_X+\lambda P_X$ is not maximally monotone for every $\lambda>0$.
\end{proposition}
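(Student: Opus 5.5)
We treat the quotient case and the subspace case separately, starting from the counterexamples already built: the pair $(A,P)$ on $Y=c_0(I)$ from Example~\ref{thm:C}, and the pair $(T,B)$ on $\ell^1$ from Example~\ref{thm:L}. Since $I=\Nzero\times\N$ is countable, $c_0(I)\cong c_0$, and both pairs transport along linear isomorphisms. Pulling back along an isomorphism $\Phi$ is a special case of Theorem~\ref{lem:surjection-transport}, with $C_Q=\|\Phi^{-1}\|$. The statement about $A_X+\lambda P_X$ for $\lambda>0$ comes from Proposition~\ref{cor:positive-perturbations}(1) on the base space. The witness points $(z_c,p_c)$ found there are then carried to $X$ by the same transfer mechanism used for $\lambda=1$.

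\emph{Quotient case.} Let $Q:X\to V$ be a bounded linear surjection with $V\in\{c_0,\ell^1\}$, after composing with an isomorphism. By the open mapping theorem there is $C_Q>0$ such that every $y\in V$ has a preimage of norm at most $C_Q\|y\|$. Set $A_X=Q^*AQ$ and $P_X=Q^*PQ$. Then $P_Xx=\pair{x}{Q^*g}Q^*g$, which is bounded, positive and has full domain. It is rank one because $Q^*$ is injective, so $Q^*g\neq0$. Theorem~\ref{lem:surjection-transport}(ii) then yields maximality of both operators, the interior condition, and nonmaximality of the sum. For a general $\lambda>0$, the identity $Q^*(\lambda P)Q=\lambda P_X$ lets us apply the same theorem to the base pair $(A,\lambda P)$, which is a valid base pair by Proposition~\ref{cor:positive-perturbations}.

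\emph{Subspace case.} Let $W\subset X$ be a closed subspace and $\iota:V\to W$ an isomorphism, with $V=c_0(I)$ or $\ell^1$. Write $j=\iota$ viewed as a map into $X$, and let $j^*:X^*\to V^*$ be its adjoint; $j^*$ is the restriction map and is surjective by Hahn--Banach. We define
\[
\gra A_X=\{(jx,\xi):\ (x,j^*\xi)\in\gra A_V\}.
\]
So the domain lies in $W$, and the values are all extensions of the original dual values; in particular $A_X(jx)=(j^*)^{-1}(A_Vx)$, which contains a translate of $W^\perp$. Choose a Hahn--Banach extension $\tilde g\in X^*$ with $j^*\tilde g=g_V$, and set $P_Xx=\pair{x}{\tilde g}\tilde g$. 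This operator is positive, rank one, and maximally monotone by the direct argument in Theorem~\ref{thm:endpoint-criterion}. Monotonicity of $A_X$ is immediate from $\pair{jx-jy}{\xi-\zeta}=\pair{x-y}{j^*\xi-j^*\zeta}$.

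The main obstacle is maximality of $A_X$. Let $(z,\zeta)$ be monotonically related to $\gra A_X$. First we show $z\in W$. For a fixed graph point $(jx,\xi)$, the points $(jx,\xi+\theta\phi)$ with $\phi\in W^\perp$ and $\theta\in\R$ also lie in the graph. The polar inequality then reads
\[
\pair{z-jx}{\zeta-\xi}-\theta\pair{z}{\phi}\ge0\qquad(\theta\in\R),
\]
which forces $\pair{z}{\phi}=0$ for all $\phi\in W^\perp$. Hence $z\in(W^\perp)_\perp=W$, since $W$ is closed. Write $z=jy$. The polar inequality becomes
\[
\pair{y-x}{j^*\zeta-a}\ge0\qquad\text{for all }(x,a)\in\gra A_V,
\]
so $(y,j^*\zeta)\in\gra A_V$ by maximality of $A_V$. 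Therefore $(z,\zeta)\in\gra A_X$, and $A_X$ is maximal.

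It remains to transfer the interior condition and the witness. The interior condition holds because $\dom P_X=X$. For the witness, the sum graph consists of the points
\[
(jx,\ \xi+\lambda\pair{jx}{\tilde g}\tilde g),\qquad j^*\xi\in A_Vx.
\]
For a base witness $(y,p)$, take $(jy,\tilde p)$ with $j^*\tilde p=p$. Pairings against sum-graph points reduce exactly to the base pairings, so $(jy,\tilde p)$ is in the polar. It is outside the sum graph: being inside would give $p\in(A_V+\lambda P_V)y$, contradicting the choice of $(y,p)$. For $\lambda=1$ one can simply use the witness $(0,0)$.
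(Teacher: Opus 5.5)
Your proposal is correct and follows the paper's proof essentially step by step: the quotient case is handled by Theorem~\ref{lem:surjection-transport}, and the subspace case uses the same extension of dual values by $W^\perp$, the same annihilator test forcing $z\in W$, and the same transfer of the Proposition~\ref{cor:positive-perturbations} witness for general $\lambda>0$. The only differences are cosmetic (you merge the paper's isomorphism pullback and restriction map into the single embedding $j$ with adjoint $j^*$) and one step you leave implicit: when the base space is $\ell^1$, the $\lambda$-witness must first be pulled back from $c_0(I)$ through the specified quotient of Example~\ref{thm:L}, which the paper states explicitly in one sentence.
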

\begin{proof}
Let $V$ be $c_0$ or $\ell^1$, and denote the corresponding pair from Example~\ref{thm:C} or Example~\ref{thm:L} by $A_V,P_V$. In both cases,
\[
P_Vv=\pair{v}{g_V}g_V
\]
for a nonzero $g_V\in V^*$, and
\begin{equation}
0\notin\dom A_V,\qquad
\pair{v}{a+P_Vv}>0
\quad((v,a)\in\gra A_V).
\label{eq:cor-base-witness}
\end{equation}

\noindent\textit{(1) Closed subspaces.}
Let $Y$ be a closed subspace of $X$ and $U:Y\to V$ a Banach space isomorphism. By Theorem~\ref{lem:surjection-transport}, the operators
\[
A_Y=U^*A_VU,\qquad P_Y=U^*P_VU
\]
are maximally monotone. With $g_Y=U^*g_V$, we have $P_Yy=\pair{y}{g_Y}g_Y$. The adjoint identity transfers Eq. (\ref{eq:cor-base-witness}) to this pair.

Let $R:X^*\to Y^*$ be the restriction map. By the Hahn--Banach theorem, choose $g\in X^*$ with $Rg=g_Y$. Define
\[
\begin{aligned}
A_X(x)&=
\begin{cases}
\{p\in X^*:Rp\in A_Y(x)\},&x\in Y,\\
\varnothing,&x\notin Y,
\end{cases}\\
P_Xx&=\pair{x}{g}g.
\end{aligned}
\]
The restriction map is surjective by the Hahn--Banach theorem, so $\dom A_X=\dom A_Y\ne\varnothing$. For any two graph points of $A_X$, restriction to $Y$ preserves their pairing. Thus $A_X$ is monotone.

To prove maximality, let $(z,p)\in(\gra A_X)^\mu$. Fix $(x,a)\in\gra A_X$ and write
\[
Y^\perp=\{k\in X^*:k|_Y=0\}.
\]
For every $k\in Y^\perp$ and $t\in\R$, the point $(x,a+tk)$ belongs to $\gra A_X$. Testing these points gives
\[
0\le\pair{z-x}{p-a}-t\pair{z-x}{k}
\qquad(t\in\R).
\]
Hence $z$ annihilates $Y^\perp$. Since $Y$ is closed, the Hahn--Banach theorem implies $z\in Y$. For each $(y,b)\in\gra A_Y$, choose an extension of $b$ to $X^*$ and test the resulting graph point of $A_X$. It follows that $(z,Rp)$ is monotonically related to $\gra A_Y$. Maximality of $A_Y$ gives $Rp\in A_Y(z)$, so $(z,p)\in\gra A_X$.

The map $P_X$ is nonzero, bounded, positive and rank one. The direct polar argument for $\mathcal P$ in Theorem~\ref{thm:endpoint-criterion} proves its maximality. Since its domain is $X$, the interior-domain condition holds. For every $(x,p)\in\gra A_X$,
\[
\pair{x}{p+P_Xx}
=\pair{x}{Rp+P_Yx}>0.
\]
Also $0\notin\dom A_X$. Therefore, $(0,0)$ is monotonically related to $\gra(A_X+P_X)$ and does not belong to it.

\noindent\textit{(2) Quotients.}
Let $Q:X\to V$ be a bounded linear surjection. The open mapping theorem supplies the preimage bound required in Theorem~\ref{lem:surjection-transport}. Define
\[
A_X=Q^*A_VQ,\qquad P_X=Q^*P_VQ.
\]
That theorem proves maximality of both operators. Moreover,
\[
P_Xx=\pair{x}{Q^*g_V}Q^*g_V,
\]
where $Q^*g_V\ne0$ because $Q$ is surjective. Thus $P_X$ has the required rank-one form and full domain. Surjectivity of $Q$ also gives $\dom A_X\ne\varnothing$. For $(Qx,a)\in\gra A_V$, Eq. (\ref{eq:cor-base-witness}) gives
\[
\pair{x}{Q^*a+P_Xx}
=\pair{Qx}{a+P_V(Qx)}>0.
\]
Since $0\notin\dom A_V$, we have $0\notin\dom A_X$. The point $(0,0)$ again proves nonmaximality of the sum.

For the final assertion, fix $\lambda>0$. Proposition~\ref{cor:positive-perturbations} supplies a polar point $(z,p)$ for the sum on $c_0$ with $z\notin\dom A$. The surjection defining the example on $\ell^1$ transfers it to $(u,Q^*p)$, where $Qu=z$, so the same kind of point exists on either base space $V$. In case (1), transfer this point through the isomorphism to $(z_Y,p_Y)$ on $Y$ and choose $p_X\in X^*$ with $Rp_X=p_Y$. Then
\[
\pair{z_Y-x}{p_X-a-\lambda P_Xx}
=\pair{z_Y-x}{p_Y-Ra-\lambda P_Yx}\ge0
\quad((x,a)\in\gra A_X),
\]
and $z_Y\notin\dom A_X$. In case (2), choose $z_X$ with $Qz_X=z$ and use the point $(z_X,Q^*p)$. The adjoint identity preserves the polar inequalities, and $z\notin\dom A_V$ implies $z_X\notin\dom A_X$. These transfers use the same operators $A_X,P_X$ for every $\lambda$.
\end{proof}

\section{Further consequences and extensions}
\label{sec:corollaries}
In this section, we derive four structural consequences of the construction and pullback theorems. First, we determine the domain geometry and exact radial bound of the weighted families. Next, we characterize reflexivity by a fixed rank-one test within two classical classes of spaces. We also identify all maximal extensions under surjective pullback and compute the exact difference between the Fitzpatrick value of a sum and its dual decomposition minimum. Further consequences, including the actual convex hull and the pullback of Fitzpatrick functions, are proved in Appendices~\ref{app:parameters}--\ref{app:fitzpatrick}.

\subsection{Domain geometry and boundary behavior}
\label{subsec:domain-boundary}

For the weighted families with prescribed affine values, the full parameter fibres determine the domain closure, the size of the actual domain and the exact radial bound. The following result also quantifies the divergence of dual values at the two excluded parameter hyperplanes.

Here $W_B$ is the closed subspace of $Y$ annihilating $V_B$:
\begin{equation}
W_B=\{x\in c_0(I):x_{n,1}=x_{n,2}\text{ for }n\in B\}.
\label{eq:family-primal-subspace}
\end{equation}

\begin{corollary}[Domain closure and boundary behavior]
\label{cor:family-domain-geometry}
Let $A=A_{w,B,\omega}$ be as in Proposition~\ref{cor:family-affine-values}, and let $\ell$ be a Lipschitz bound for $\omega$.
\begin{enumerate}
\item The norm closure, distance from zero and closed convex hull of the domain are
\begin{equation}
\begin{aligned}
\overline{\dom A}&=\{x\in W_B:|\pair{x}{g}|\ge1\},\\
\dist(0,\dom A)&=\frac12,\qquad
\overline{\operatorname{conv}(\dom A)}=W_B.
\end{aligned}
\label{eq:family-domain-closure}
\end{equation}
\item Every $x\in W_B$ with $\pair{x}{g}\in\{-1,1\}$ belongs to $\overline{\dom A}\setminus\dom A$. For every sequence $(x_j,a_j)\in\gra A$ with $x_j\to x$ in norm, one has $\|a_j\|_1\to\infty$. More precisely, every $(x,a)\in\gra A$, with $s=|\pair{x}{g}|-1>0$, satisfies
\begin{equation}
\begin{aligned}
\|a\|_1&\ge\sum_{n\ge1}\frac{|\omega_n(s)|}{w_n}\\
&\ge\sum_{n=1}^N\frac{|\eta_n|}{w_n}
-\ell s\left(\sum_{n=1}^Nw_n^{-2}\right)^{\frac12}
\qquad(N\ge1).
\end{aligned}
\label{eq:family-boundary-bound}
\end{equation}
\item The domain is meagre in $W_B$, meaning that it is contained in a countable union of closed sets with empty relative interior. It is dense in $\{x\in W_B:|\pair{x}{g}|>1\}$ but has empty relative interior. For every $t\in J$, the set $\{x\in\dom A:\pair{x}{g}=t\}$ is dense and meagre in $\{x\in W_B:\pair{x}{g}=t\}$.
\item The radial bound is exact:
\begin{equation}
\mathcal V(\gra A)=2S.
\label{eq:family-exact-radial}
\end{equation}
The supremum in its definition is not attained at any graph point.
\end{enumerate}
\end{corollary}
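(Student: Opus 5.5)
The plan is to work fibre by fibre. By Proposition~\ref{cor:family-affine-values}, the parameter fibres of $D_{w,\omega}$ are $a_w^t+K$ for $t\in J$, and $L_{w,B}$ is affine on each fibre. Writing $L=L_{w,B}$ and $E=E_{w,B}$, I expect
\[
\{x\in\dom A:\pair{x}{g}=t\}=L a_w^t-E(K).
\]
By Eq. (\ref{eq:abs-pairing}), $\pair{La}{g}=r(a)=t$. Also $\pair{La}{v}=0$ for $v\in V_B$. To see this, use the pairing identity (\ref{eq:family-weighted-pairing}), which the skew part $S_B$ leaves unchanged, together with $Ev=0$, $M_wv=0$ and $r(v)=0$. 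Hence $\dom A\subset W_B\cap\{|\pair{x}{g}|>1\}$. The main structural step is to prove that $E(K)$ is dense in $W_B\cap\ker g$. I would do this by computing its annihilator in $\ell^1(I)$. If $c$ annihilates $E(K)$, then the pairing identity and $M_wk=0$ give $\pair{Ec}{k}=-\pair{Ek}{c}=0$ for all $k\in K$. By (\ref{ass:annihilator}) this gives $Ec\in\R h=\R Eg$, and therefore $c\in\R g+\ker E=\R g+V_B$. The preannihilator of $\R g+V_B$ in $c_0(I)$ is exactly $W_B\cap\ker g$, which gives the density.

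Part (1) then follows by taking the union over $|t|>1$ of the closed affine hyperplanes $\{x\in W_B:\pair{x}{g}=t\}$ and taking closures. The distance $\dist(0,\dom A)=\frac12$ follows from $\|g\|_1=2$, giving $\|x\|_\infty\ge|t|/2>\frac12$ on $\dom A$. For the reverse bound, approximate $\frac t2(e_{0,1}-e_{0,2})\in W_B$ with $t\downarrow1$. Since the domain meets hyperplanes on both sides $\pm t$, the closed convex hull is all of $W_B$.

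For part (2), the endpoints $\pair{x}{g}=\pm1$ are excluded from $\dom A$ because $|r(a)|>1$. The estimate (\ref{eq:family-boundary-bound}) comes from the block-sum bound $\|a\|_1\ge\sum_n|(M_wa)_n|/w_n$ with $M_wa=(\sgn t,\omega(s))$. I would then truncate at $N$ and apply Cauchy--Schwarz to $\sum_{n\le N}|\omega_n(s)-\eta_n|/w_n$, using the Lipschitz bound $\|\omega(s)-\eta\|_2\le\ell s$. If $x_j\to x$ with $\pair{x}{g}=\pm1$, then $s_j\to0$. Because $\eta\notin\mathcal R_w$, the truncated sums are unbounded, so $\|a_j\|_1\to\infty$.

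For part (3), I plan to write $\dom A=\bigcup_N F_N$ with $F_N=\{La:a\in D_{w,\omega},\ \|a\|_1\le N\}$.

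\emph{Closedness of $F_N$.} Take $La_j\to x$ with $\|a_j\|_1\le N$. By part (2), $|r(a_j)|$ stays bounded away from $1$. Since $\ell^1(I)$ is the dual of the separable space $c_0(I)$, I can extract a weak$^*$ convergent subsequence $a_j\to a$. Coordinatewise, $r$ and $M_w$ pass to the limit, and so does the constraint $M_wa=F(r(a))$. The coordinates of $Ea_j$ converge to those of $Ea$, because each output coordinate is a weak$^*$ continuous functional, pairing with a bounded sequence. Hence $x=La$.

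\emph{Empty interior.} Since each $F_N\subset\dom A$, it suffices that $\dom A$ has empty relative interior. Each slice is an affine translate of the operator range $E(K)$. By the open mapping dichotomy, such a range is either the whole closed target or meagre in it. So it suffices to exhibit one point of $W_B\cap\ker g$ outside $E(K)$. I would invert the triangular relation $x_{n,j}-x_{n,j+1}=w_n^2(a_{n,j}+a_{n,j+1})$ in a block $n\notin B$. Choosing $x_{n,j}$ decreasing to $0$ like $1/j$ then forces nonsummable $a_{n,\cdot}$.

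This non-closedness of the range is the step I expect to be the main obstacle, in particular making the inversion rigorous in the presence of the $S_B$ modification. If needed I would place the witness in a block $n\notin B$ (or block $0$'s tail), where $S_B$ does not act. Density of each slice comes from part (1), and meagreness of the whole domain then follows from the $F_N$ decomposition.

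For part (4), I would use $\pair{La}{a}=t^2-1-\|\omega(|t|-1)\|^2>-S$ together with $\|La\|_\infty\ge|t|/2$. These give strict inequality at every graph point, so the supremum is not attained. For the matching lower bound, let $t\downarrow1$, so that $\|\omega(t-1)\|^2\to S$. By the density of part (1), I can choose $k\in K$ with $\|L(a_w^t+k)\|_\infty$ arbitrarily close to $t/2$. This makes the ratio approach $S/(1/2)=2S$.
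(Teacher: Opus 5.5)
Parts (1), (2) and (4) follow the paper's own argument: the annihilator of $E(K)$ via the pairing identity and (H2), the block-sum bound with Cauchy--Schwarz, and approximation of $\frac t2(e_{0,1}-e_{0,2})$ within a fixed fibre.

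The gap is in part (3), in your proof that $F_N$ is closed. The weak$^*$ continuous functionals on $\ell^1(I)=c_0(I)^*$ are pairings with elements of $c_0(I)$. By contrast, $(M_wa)_n$ and $(Ea)_{n,j}$ pair $a$ with vectors that are constant on a tail of block $n$, and pairing with a merely bounded sequence is not weak$^*$ continuous. For instance, $e_{1,j}\to0$ weak$^*$, yet $(M_we_{1,j})_1=w_1$ and $(Ee_{1,j})_{1,1}=2w_1^2$ for all $j\ge3$. So weak$^*$ compactness alone gives neither $a\in D$ nor $La=x$, because block-sum mass can escape to infinity in the limit. Without closedness (or nowhere density of $\overline{F_N}$), the decomposition $\dom A=\bigcup_NF_N$ does not give meagreness.

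The step can be repaired, but only by using the norm convergence $La_j\to x$. Put $d_j=a_j-a$. The quantity $(Ed_j)_{n,i}-2w_n^2(md_j)_n$ involves only finitely many coordinates of $d_j$, so it tends to $0$. Meanwhile $(Ea_j)_{n,i}$ converges, because $La_j\to x$ and $r(a_j)\to r(a)$. Hence $\beta_n=\lim_j(Ed_j)_{n,i}=\lim_j2w_n^2(md_j)_n$ exists and is independent of $i$. The $n$th blocks of $x$, $h$ and $Ea$ all lie in $c_0$, so letting $i\to\infty$ gives $\beta_n=0$. Thus $(ma_j)_n\to(ma)_n$ for every $n$ and $x=La$. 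Your argument then closes using continuity of $F$ and weak$^*$ lower semicontinuity of $\|\cdot\|_1$.

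Your witness for $E(K)\ne W_B\cap\ker g$ is also mis-argued. If $x_{n,j}$ decreases like $1/j$, the consecutive differences are $O(j^{-2})$. The triangular system then has a summable solution (with nonzero block sum), so nonsummability is not forced. What keeps such a vector out of $E(K)$ is the zero block-sum condition together with $\ker E=V_B$. With that reasoning any single-block vector works, e.g.\ $e_{0,3}=E(2e_{0,1}-2e_{0,2}+e_{0,3})$, all of whose preimages have block-zero sum $1$.

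A more useful witness is $(-1)^j/j$ placed on the coordinates $j\ge3$ of one positive block. It has infinite variation, whereas every $x=La$ with $a\in\ell^1(I)$ satisfies $\sum_{j\ge3}|x_{n,j+1}-x_{n,j}|\le2w_n^2\|a\|_1$. So it lies outside even $L(\ell^1(I))\subset W_B$. Applying your open-mapping dichotomy to $L:\ell^1(I)\to W_B$ then makes $\dom A$ meagre in $W_B$ at once, with no need for $F_N$.

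This bounded-variation obstruction is exactly the paper's route. It uses the evidently closed sets $C_m=\{x\in W_B:\sum_{j\ge3}|x_{1,j+1}-x_{1,j}|\le m\}$ and an alternating perturbation that preserves $W_B$ and $\pair{x}{g}$. That gives meagreness in $W_B$ and in every hyperplane $\{\pair{x}{g}=t\}$ simultaneously.
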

\begin{proof}
\noindent\textit{(1)} Write $E'=E_{w,B}$ and $L'=L_{w,B}$. For $p\in\ell^1(I)$ and $k\in K$, the symmetric pairing identity and $M_wk=r(k)=0$ give $\pair{L'k}{p}=\pair{E'p}{k}$. Thus $p$ annihilates $L'K$ exactly when $E'p\in\R h$, by \textup{(\ref{ass:annihilator})}. Since $E'g=h$ and $\ker E'=V_B$, this is equivalent to $p\in\R g+V_B$. The Hahn--Banach separation theorem therefore gives
\begin{equation}
\overline{L'K}=\{x\in W_B:\pair{x}{g}=0\}.
\label{eq:family-fibre-closure}
\end{equation}
For $v\in V_B$, we have $E'v=0$, $M_wv=0$ and $r(v)=0$, so the same identity gives $\pair{L'a}{v}=0$ for every $a$. Every graph primal consequently belongs to $W_B$. At each $t\in J$, Eq. (\ref{eq:family-parameter-label}) gives an admissible $a_w^t$, and the full primal fibre is $L'a_w^t+L'K$. By Eq. (\ref{eq:abs-pairing}) and Eq. (\ref{eq:family-fibre-closure}), its closure is exactly $\{x\in W_B:\pair{x}{g}=t\}$.

All these exterior hyperplanes belong to $\overline{\dom A}$. If $\pair{x}{g}=\epsilon\in\{-1,1\}$ and $x\in W_B$, approximate $(1+1/j)x$ within $1/j$ by a domain point in the hyperplane at $t=(1+1/j)\epsilon$. These points converge to $x$. Conversely, every domain point belongs to $W_B$ and satisfies $|\pair{x}{g}|>1$, proving the closure formula. Since $\|g\|_1=2$, all domain points have norm greater than $\frac12$, and $\frac{e_{0,1}-e_{0,2}}{2}$ belongs to the displayed closure with norm $\frac12$. This proves the distance formula. For any $x\in W_B$, the points $x\pm R(e_{0,1}-e_{0,2})$ belong to the two exterior halfspaces when $R$ is sufficiently large. Their midpoint is $x$, so the closed convex hull of the domain equals $W_B$.

\noindent\textit{(2)} The block-sum bound proves the first inequality in Eq. (\ref{eq:family-boundary-bound}). The Lipschitz extension at zero satisfies $\|\omega(s)-\eta\|_2\le\ell s$. Applying the reverse triangle inequality to the first $N$ coordinates and then Cauchy--Schwarz proves the second inequality. Now let $(x_j,a_j)\in\gra A$ and $x_j\to x$ with $\pair{x}{g}\in\{-1,1\}$. Then $s_j=|\pair{x_j}{g}|-1\to0$. For every fixed $N$, Eq. (\ref{eq:family-boundary-bound}) yields
\[
\liminf_{j\to\infty}\|a_j\|_1\ge\sum_{n=1}^N\frac{|\eta_n|}{w_n}.
\]
These partial sums are unbounded because $\eta\notin\mathcal R_w$, hence $\|a_j\|_1\to\infty$. Membership in the closure follows from part (1), and $|\pair{x}{g}|=1$ excludes membership in the domain.
\noindent\textit{(3)} Fix the positive block $n=1$. The skew modification affects only its first two coordinates, so every $(x,a)\in\gra A$ satisfies
\[
x_{1,j+1}-x_{1,j}=w_1^2(a_{1,j}+a_{1,j+1})\qquad(j\ge3).
\]
Consequently, $\sum_{j\ge3}|x_{1,j+1}-x_{1,j}|\le2w_1^2\|a\|_1$. For $m\in\N$, put
\[
C_m=\left\{x\in W_B:\sum_{j\ge3}|x_{1,j+1}-x_{1,j}|\le m\right\}.
\]
Each $C_m$ is closed, since the sum is the supremum of its finite continuous partial sums. It has empty relative interior: given $x\in W_B$ and $\varepsilon>0$, choose a sufficiently distant tail with $|x_{1,j}|<\varepsilon/8$ and add alternating values $\varepsilon/2$ and $-\varepsilon/2$ at finitely many consecutive coordinates. Each internal difference then has magnitude at least $3\varepsilon/4$, so a long enough perturbation leaves $C_m$. Its norm is $\varepsilon/2$, and it preserves $W_B$ and $\pair{x}{g}$. Thus $C_m$ also has empty interior relative to each hyperplane $\{x\in W_B:\pair{x}{g}=t\}$. The domain is contained in $\bigcup_m C_m$. The Baire category theorem gives empty relative interior, and the density assertions follow from the fibre closure proved in (1).

\noindent\textit{(4)} Write $W(s)=\|\omega(s)\|_2^2$. At a fixed parameter $t\in J$, the closure of the primal fibre contains $\frac{t}{2}(e_{0,1}-e_{0,2})$. Since $\|g\|_1=2$, the infimum of the primal norms in that fibre is $\frac{|t|}{2}$. Its graph points all have pairing $t^2-1-W(|t|-1)$, hence
\[
\mathcal V(\gra A)
=\sup_{v>1}\frac{2\max\{0,1+W(v-1)-v^2\}}{v}
=2S.
\]
Indeed, each displayed ratio is strictly less than $2S$, whereas it tends to $2S$ as $v\downarrow1$. The same strict upper bound applies to every actual graph point, proving nonattainment.
\end{proof}

\subsection{Reflexivity in two classes of Banach spaces}
\label{subsec:main-class-reflexivity}
The closed-subspace conclusion of Proposition~\ref{cor:space-transfer} also applies when the ambient space has no specified quotient onto either base space. It gives a short characterization within two classical classes of Banach spaces.

\begin{corollary}[Reflexivity in two classes of spaces]
\label{cor:class-reflexivity}
Suppose that $X$ is a real Banach lattice or a real Banach space with an unconditional Schauder basis. The following are equivalent:
\begin{enumerate}
\item $X$ is reflexive.
\item Every pair of maximally monotone operators on $X$ satisfying Eq. (\ref{eq:original-cq}) has a maximally monotone sum.
\item For every maximally monotone $A:X\rightrightarrows X^*$ and every $g\in X^*\setminus\{0\}$, the sum $A+\pair{\cdot}{g}g$ is maximally monotone.
\end{enumerate}
If $X$ is nonreflexive, then for every prescribed $g\in X^*\setminus\{0\}$ there is a maximally monotone $A_g$ such that $A_g+\lambda\pair{\cdot}{g}g$ is nonmaximal for every $\lambda>0$. In particular, for nonzero $X$ in either class, reflexivity is equivalent to maximality of $A+\pair{\cdot}{g}g$ for all maximally monotone $A$, with any one nonzero $g$ fixed in advance.
\end{corollary}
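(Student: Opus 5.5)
The equivalences go around the cycle $(1)\Rightarrow(2)\Rightarrow(3)\Rightarrow(1)$. The only nontrivial arrow is $(3)\Rightarrow(1)$, which I would prove in contrapositive form together with the final assertion about a prescribed $g$.

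\emph{$(1)\Rightarrow(2)$.} This is Rockafellar's theorem \cite{rockafellar1970} for reflexive spaces.

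\emph{$(2)\Rightarrow(3)$.} For $g\ne0$, the map $\mathcal P_gx=\pair{x}{g}g$ is maximally monotone with $\dom\mathcal P_g=X$, by the direct polar argument in Theorem~\ref{thm:endpoint-criterion}. A maximally monotone $A$ has nonempty domain, so Eq. (\ref{eq:original-cq}) holds automatically for the pair $(A,\mathcal P_g)$. Then (2) gives maximality of $A+\mathcal P_g$.

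\emph{Reduction of $(3)\Rightarrow(1)$ to Proposition~\ref{cor:space-transfer}.} Assume $X$ is nonreflexive. I would invoke two classical structural facts.
\begin{itemize}
\item A Banach lattice is reflexive if and only if it contains no sublattice (equivalently, no closed subspace) isomorphic to $c_0$ or $\ell^1$. This is the Lozanovskii--Tzafriri/Meyer-Nieberg theorem.
\item James's theorem: a space with an unconditional basis is reflexive if and only if it contains no subspace isomorphic to $c_0$ or $\ell^1$.
\end{itemize}
In either case $X$ contains a closed subspace isomorphic to $c_0$ or $\ell^1$. Case (1) of Proposition~\ref{cor:space-transfer} then supplies a maximally monotone $A_X$ and some nonzero $g_0\in X^*$ such that $A_X+\lambda\pair{\cdot}{g_0}g_0$ is nonmaximal for every $\lambda>0$. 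This already contradicts (3) with $g=g_0$.

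\emph{Prescribed $g$.} To reach an arbitrary $g\ne0$, I would change coordinates with an isomorphism $T:X\to X$ satisfying $T^*g_0=g$, that is, $g_0\circ T=g$. Fix $x_0$ with $\pair{x_0}{g}=1$ and $y_0$ with $\pair{y_0}{g_0}=1$. Then
\[
X=\ker g\oplus\R x_0=\ker g_0\oplus\R y_0 .
\]
Any two closed hyperplanes of a Banach space are isomorphic. Concretely, both $\ker g$ and $\ker g_0$ are complemented in $X$ and isomorphic to a common hyperplane of $\ker g\cap\ker g_0$ augmented by one dimension. Fix an isomorphism $T_0:\ker g\to\ker g_0$ and set
\[
T(k+sx_0)=T_0k+sy_0 .
\]
This $T$ is a bounded bijection with $g_0\circ T=g$.

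Now apply Theorem~\ref{lem:surjection-transport} with $Q=T$. The bounded preimage constant is $C_Q=\|T^{-1}\|$. The pullback $T^*A_XT$ is maximally monotone. The pullback of $\lambda\pair{\cdot}{g_0}g_0$ is $\lambda\pair{\cdot}{T^*g_0}T^*g_0=\lambda\pair{\cdot}{g}g$. For each $\lambda>0$, the witness point of nonmaximality transfers by Eq. (\ref{eq:pullback-witness}), exactly as in the final paragraph of the proof of Proposition~\ref{cor:space-transfer}. So $A_g:=T^*A_XT$ works for all $\lambda>0$ simultaneously. The last sentence of the corollary follows: fixing any one nonzero $g$, maximality of $A+\pair{\cdot}{g}g$ for all maximally monotone $A$ forces reflexivity, and the converse is $(1)\Rightarrow(3)$.

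The main obstacle is citing the correct structural theorems for lattices and unconditional bases, since these are the only inputs not proved in the paper. A secondary point is the hyperplane isomorphism, which I would handle by the elementary complementation argument above rather than citing it.
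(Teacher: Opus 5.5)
Your proof is correct and follows the paper's argument: the same cycle of implications, the Lozanovsky and James criteria feeding into the closed-subspace case of Proposition~\ref{cor:space-transfer}, and a coordinate change $T$ with $T^*g_0=g$ pulled back through Theorem~\ref{lem:surjection-transport}. The only difference is in building that isomorphism. The paper (Remark~\ref{rem:prescribed-direction}) takes the explicit rank-one perturbation of the identity $S_0x=x+\frac{g(x)-g_0(x)}{g_0(e)}e$ with $g_0(e)g(e)\ne0$. This is shorter than your hyperplane-isomorphism construction and avoids treating the case $\ker g=\ker g_0$ separately.
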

\begin{proof}
Rockafellar's sum theorem gives (1)$\Rightarrow$(2), and the full domain of the positive rank-one factor gives (2)$\Rightarrow$(3). A nonreflexive space in either stated class contains a closed subspace isomorphic to $c_0$ or $\ell^1$, by the classical reflexivity criteria of Lozanovsky for Banach lattices and James for spaces with unconditional Schauder bases \cite{kitoverorhon2014,chenkaniaruan2022}. Proposition~\ref{cor:space-transfer} then contradicts (3). To prescribe $g$, apply Remark~\ref{rem:prescribed-direction} to the pair supplied by that proposition. The same argument with this fixed $g$ proves the last equivalence.
\end{proof}

\subsection{Monotone polars and maximal extensions under pullback}
\label{subsec:pullback-consequences}
Theorem~\ref{lem:surjection-transport} preserves maximality, and the adjoint identity transfers the polar inequalities used to certify the counterexamples. Now, we compute the entire monotone polar of the pullback of any nonempty graph. For a monotone graph, this formula also identifies all maximally monotone extensions of its pullback.

For a bounded linear surjection $Q:U\to X$ and a graph $G\subset X\times X^*$, write
\[
\mathcal L_QG=\{(u,Q^*a):(Qu,a)\in G\}.
\]

\begin{corollary}[Monotone polars and maximal extensions under pullback]
\label{cor:pullback-extensions}
For every nonempty graph $G\subset X\times X^*$,
\begin{equation}
(\mathcal L_QG)^\mu=\mathcal L_Q(G^\mu),\qquad
(\mathcal L_QG)^\mu\setminus\mathcal L_QG=\mathcal L_Q(G^\mu\setminus G).
\label{eq:pullback-entire-polar}
\end{equation}
If $G$ is monotone, $E\mapsto\mathcal L_QE$ is a bijection between its maximally monotone extensions and those of $\mathcal L_QG$. Thus uniqueness and the number of maximal extensions are preserved. In particular, for every $\lambda>0$, the pullback of $\gra(\mathcal A+\lambda\mathcal P)$ in Proposition~\ref{cor:positive-perturbations} has at least continuum many maximally monotone extensions.
\end{corollary}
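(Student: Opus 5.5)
The argument reruns the proof of Theorem~\ref{lem:surjection-transport}(\ref{thm:pullback-maximality}) for an arbitrary nonempty graph, without assuming maximality. The preimage bound $\|u\|\le C_Q\|y\|$ is available from the open mapping theorem; call its constant $C_Q$. All the structure comes from two facts: $Q$ is surjective, so $Q^*$ is injective, and $\pair{u}{Q^*a}=\pair{Qu}{a}$.

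\emph{Inclusion $\mathcal L_Q(G^\mu)\subseteq(\mathcal L_QG)^\mu$.} Take $(u,Q^*p)$ with $(Qu,p)\in G^\mu$ and any $(w,Q^*a)\in\mathcal L_QG$. The adjoint identity gives
\[
\pair{u-w}{Q^*p-Q^*a}=\pair{Qu-Qw}{p-a}\ge0.
\]

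\emph{Inclusion $(\mathcal L_QG)^\mu\subseteq\mathcal L_Q(G^\mu)$.} Let $(v,v^*)\in(\mathcal L_QG)^\mu$. Fix one point $(u,Q^*a)\in\mathcal L_QG$; this is where nonemptiness of $G$ and surjectivity of $Q$ are used. The whole line $(u+tk,Q^*a)$ with $k\in\ker Q$ lies in $\mathcal L_QG$. Testing against it forces $v^*$ to annihilate $\ker Q$. As in the pullback theorem, $v^*$ then factors as $Q^*p$, with $p$ bounded by $C_Q\|v^*\|$. Next, each $(y,a)\in G$ lifts to a point of $\mathcal L_QG$, and testing against these lifts gives $(Qv,p)\in G^\mu$. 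Hence $(v,v^*)\in\mathcal L_Q(G^\mu)$.

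\emph{Set-difference identity.} Injectivity of $Q^*$ is the key here. A point $(u,Q^*p)$ lies in $\mathcal L_QG$ iff $(Qu,p)\in G$, because the dual label $p$ is uniquely determined by $Q^*p$. Consequently $\mathcal L_Q$ commutes with set differences of the form $\mathcal L_Q(G^\mu)\setminus\mathcal L_QG=\mathcal L_Q(G^\mu\setminus G)$. The same argument shows that $\mathcal L_Q$ is injective and inclusion-preserving on graphs in $X\times X^*$, and that it reflects inclusions: $\mathcal L_QE\subseteq\mathcal L_QE'$ implies $E\subseteq E'$.

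\emph{Bijection of maximal extensions.} If $E\supseteq G$ is maximally monotone, then $\mathcal L_QE$ is maximally monotone by Theorem~\ref{lem:surjection-transport}(\ref{thm:pullback-maximality}) and contains $\mathcal L_QG$. Injectivity of the map $E\mapsto\mathcal L_QE$ is shown above. Surjectivity onto the maximal extensions of $\mathcal L_QG$ is the main obstacle. Given a maximally monotone $\widetilde E\supseteq\mathcal L_QG$, I would show that $\widetilde E=\mathcal L_QE$ for some $E$. Since $\widetilde E\subseteq(\mathcal L_QG)^\mu=\mathcal L_Q(G^\mu)$, every point of $\widetilde E$ has the form $(u,Q^*p)$. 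Set
\[
E=\{(Qu,p):(u,Q^*p)\in\widetilde E\}.
\]
Monotonicity of $E$ follows from the adjoint identity, so $\widetilde E\subseteq\mathcal L_QE$. The graph $\mathcal L_QE$ is monotone, again by the adjoint identity, so maximality of $\widetilde E$ gives equality. It remains to check that $E$ is maximal: if $E'\supseteq E$ is monotone, then $\mathcal L_QE'\supseteq\widetilde E$ is monotone, hence equal to $\widetilde E$, and reflection of inclusions gives $E'=E$. Also $E\supseteq G$, because $\mathcal L_QG\subseteq\mathcal L_QE$.

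\emph{Final claim.} Apply the bijection to $G=\gra(\mathcal A+\lambda\mathcal P)$. By Proposition~\ref{cor:positive-perturbations}(1) this graph has at least continuum many maximally monotone extensions, so its pullback does too.
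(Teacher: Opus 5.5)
Your proposal is correct and follows essentially the same route as the paper: the kernel-translation test plus the factorization from Theorem~\ref{lem:surjection-transport} gives the polar formula, injectivity of $Q^*$ gives the set-difference identity, and the graph $E=\{(Qu,p):(u,Q^*p)\in\widetilde E\}$ gives surjectivity of the extension correspondence. Your explicit observation that $\mathcal L_Q$ reflects inclusions, which uses surjectivity of $Q$ together with injectivity of $Q^*$, simply spells out what the paper compresses into ``surjectivity of $Q$ and injectivity of $Q^*$ show that these assignments are inverse.''
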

\begin{proof}
Let $(u,u^*)\in(\mathcal L_QG)^\mu$. Fix $(x,a)\in G$ and a lift $v$ of $x$. Testing against $(v+tk,Q^*a)$ for $k\in\ker Q$ and both signs of $t$ forces $u^*$ to annihilate $\ker Q$. The factorization argument in Theorem~\ref{lem:surjection-transport} gives a unique $p\in X^*$ with $u^*=Q^*p$. The adjoint identity then gives $(Qu,p)\in G^\mu$. The converse follows from the same identity, and injectivity of $Q^*$ gives the set-difference formula.

Let $H$ be a maximally monotone extension of $\mathcal L_QG$. By Eq. (\ref{eq:pullback-entire-polar}), all its dual coordinates lie in $\operatorname{ran}Q^*$. The graph
\[
E=\{(Qu,a):(u,Q^*a)\in H\}
\]
is monotone and contains $G$. Its pullback is monotone and contains $H$, so maximality gives $H=\mathcal L_QE$. A proper monotone extension of $E$ would pull back to a proper extension of $H$, hence $E$ is maximally monotone. Conversely, Theorem~\ref{lem:surjection-transport} lifts every maximal extension of $G$. Surjectivity of $Q$ and injectivity of $Q^*$ show that these assignments are inverse. Apply this bijection to Proposition~\ref{cor:positive-perturbations}(1).
\end{proof}

If $(z,p)$ is a missing polar point and $Qu_0=z$, then every $(u_0+k,Q^*p)$, $k\in\ker Q$, is a missing polar point of the pullback. These points are mutually compatible, and any maximal extension containing one contains all of them. Thus a larger kernel enlarges the affine family of witnesses but does not itself increase the number of maximal extensions.

\subsection{Fitzpatrick functions and dual decomposition}
\label{subsec:main-fitzpatrick}
Next, we compare the Fitzpatrick function of a sum with the infimum obtained by splitting the dual variable between its two factors. The construction gives an exact positive difference even though the dual minimum is finite and uniquely attained. Appendix~\ref{subsec:appendix-fitzpatrick-transfer} gives the corresponding pullback formulas.

\label{subsec:fitzpatrick-values}

For an operator $\mathcal T:X\rightrightarrows X^*$ with nonempty graph, its Fitzpatrick function is defined as
\begin{equation}
\mathsf F_{\mathcal T}(z,p)
=\sup_{(x,a)\in\gra\mathcal T}
\{\pair{z}{a}+\pair{x}{p}-\pair{x}{a}\}.
\label{eq:cor-fitzpatrick}
\end{equation}

\begin{corollary}[Fitzpatrick values and perturbation strength]
\label{cor:fitzpatrick-values}
Let $\mathcal A$, $\mathcal P$ and $S$ be as in Theorem~\ref{thm:endpoint-criterion}, and suppose Eq. (\ref{eq:abs-endpoint-exclusion}) holds. For every $\lambda>0$,
\begin{align}
\mathsf F_{\mathcal A+\lambda\mathcal P}(0,0)&=S-\lambda,\label{eq:cor-sum-fitzpatrick}\\
\min_{p\in X^*}\{\mathsf F_{\mathcal A}(0,-p)+\mathsf F_{\lambda\mathcal P}(0,p)\}&=S.\label{eq:cor-split-minimum}
\end{align}
The minimum in Eq. (\ref{eq:cor-split-minimum}) is attained only at $p=0$. Moreover,
\[
(0,0)\in\bigl(\gra(\mathcal A+\lambda\mathcal P)\bigr)^\mu
\quad\Longleftrightarrow\quad \lambda\ge S.
\]
\end{corollary}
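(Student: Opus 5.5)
The plan is to compute both sides directly from the graph description $\gra\mathcal A=\{(La,a):a\in D\}$ and the pairing identities in Eq. (\ref{eq:abs-pairing}). For $a\in D$ write $t=r(a)$, so $|t|>1$, and put $W(s)=\|\omega(s)\|_{H_0}^2$. By (\ref{ass:curve-limit}), $W\le S$ and $W(s)\to S$ as $s\downarrow0$. Eq. (\ref{eq:abs-pairing}) gives
\[
\pair{La}{g}=t,\qquad
\pair{La}{a}=t^2-1-W(|t|-1).
\]
Since $\mathcal P La=t g$, it follows that
\[
\pair{La}{a+\lambda\mathcal P La}=(1+\lambda)t^2-1-W(|t|-1).
\]
Assumption (\ref{ass:realization}) guarantees that every $t\in J$ occurs.

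\emph{Value at the origin.} Substituting $(z,p)=(0,0)$ into Eq. (\ref{eq:cor-fitzpatrick}) gives
\[
\mathsf F_{\mathcal A+\lambda\mathcal P}(0,0)=\sup_{t\in J}\bigl\{1+W(|t|-1)-(1+\lambda)t^2\bigr\}.
\]
Each term is strictly less than $-\lambda+S$, because $t^2>1$ and $W\le S$. As $|t|\downarrow1$ the terms tend to $S-\lambda$, using $W(s)\to S$. This proves Eq. (\ref{eq:cor-sum-fitzpatrick}).

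\emph{Polar equivalence.} By definition, $(0,0)$ lies in the polar of $\gra(\mathcal A+\lambda\mathcal P)$ exactly when $\pair{x}{b}\ge0$ for every $(x,b)$ in that graph. This is the same as $\mathsf F_{\mathcal A+\lambda\mathcal P}(0,0)\le0$, and by the previous step that holds if and only if $\lambda\ge S$.

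\emph{Split minimum.} First I compute $\mathsf F_{\lambda\mathcal P}(0,p)=\sup_{x\in X}\{\pair{x}{p}-\lambda\pair{x}{g}^2\}$.
\begin{itemize}
\item If $p\notin\R g$, there is an $x$ with $\pair{x}{g}=0$ and $\pair{x}{p}\neq0$. Scaling this $x$ gives $+\infty$.
\item If $p=cg$, optimizing over $s=\pair{x}{g}$ gives $\sup_s\{cs-\lambda s^2\}=c^2/(4\lambda)$.
\end{itemize}
So only $p=cg$ can contribute to the minimum. For such $p$, $\pair{La}{-cg}=-ct$, and therefore
\[
\mathsf F_{\mathcal A}(0,-cg)=\sup_{t\in J}\bigl\{-ct-t^2+1+W(|t|-1)\bigr\}.
\]

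\emph{The case $c=0$.} The same estimate as for Eq. (\ref{eq:cor-sum-fitzpatrick}) shows this supremum equals $S$ (approached, not attained). The total at $p=0$ is then $S+0=S$.

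\emph{The case $c\neq0$.} Let $t\to-\sgn(c)\cdot1$ from outside $[-1,1]$. Then $-ct\to|c|$ and $-t^2+1+W\to S$, so $\mathsf F_{\mathcal A}(0,-cg)\ge S+|c|$. The total is therefore at least $S+|c|+c^2/(4\lambda)>S$.

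Hence the minimum in Eq. (\ref{eq:cor-split-minimum}) equals $S$ and is attained only at $p=0$.

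I expect no serious obstacle. The only step needing care is the Fitzpatrick function of the rank-one partner: one must show it is $+\infty$ off $\R g$, which restricts the dual decomposition to one scalar parameter. Once that is done, the rest reduces to one-variable suprema governed by the boundary limit $W(s)\to S$.
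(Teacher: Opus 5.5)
Your proposal is correct and follows essentially the same route as the paper: you reduce each Fitzpatrick supremum, via the pairing identities and realization of every $t\in J$, to a one-variable supremum governed by $W\le S$ and $W(s)\to S$. You also compute $\mathsf F_{\lambda\mathcal P}(0,\beta g)=\beta^2/(4\lambda)$ (and $+\infty$ off $\R g$), bound $\mathsf F_{\mathcal A}(0,\beta g)\ge S+|\beta|$ using $t\to\pm1$, and obtain the polar equivalence from $(0,0)\in G^\mu\Leftrightarrow\mathsf F(0,0)\le0$, exactly as the paper does.
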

\begin{proof}
Put $W(s)=\|\omega(s)\|_{H_0}^2$. Since $W(s)\le S$, $W(s)\to S$ as $s\downarrow0$, and every $t\in J$ is realized, Eq. (\ref{eq:abs-pairing}) gives
\[
\mathsf F_{\mathcal A+\lambda\mathcal P}(0,0)
=\sup_{|t|>1}\{1+W(|t|-1)-(1+\lambda)t^2\}=S-\lambda.
\]
For $H(\alpha)=\mathsf F_{\mathcal A}(0,\alpha g)$, taking $t\to1^+$ and $t\to-1^-$ in the same formula gives $H(\alpha)\ge S+|\alpha|$, while $H(0)=S$. Also,
\[
\mathsf F_{\lambda\mathcal P}(0,p)=
\begin{cases}
\dfrac{\beta^2}{4\lambda},&p=\beta g,\\
+\infty,&p\notin\R g.
\end{cases}
\]
Indeed, the supremum is $\sup_{x\in X}\{\pair{x}{p}-\lambda\pair{x}{g}^2\}$. A direction in $\ker g$ makes it infinite unless $p\in\R g$, and scalar maximization gives the displayed finite value. Consequently, the objective in Eq. (\ref{eq:cor-split-minimum}) is at least $S+|\beta|+\frac{\beta^2}{4\lambda}$ when $p=\beta g$, with value $S$ only at $\beta=0$. Finally, Eq. (\ref{eq:D1}) and the definition of the Fitzpatrick function give
\[
(0,0)\in\bigl(\gra(\mathcal A+\lambda\mathcal P)\bigr)^\mu
\quad\Longleftrightarrow\quad
\mathsf F_{\mathcal A+\lambda\mathcal P}(0,0)\le0.
\]
Substituting Eq. (\ref{eq:cor-sum-fitzpatrick}) yields $\lambda\ge S$.
\end{proof}

Thus the two values differ by exactly $\lambda$, although the dual minimum is finite and uniquely attained. Proposition~\ref{cor:positive-perturbations} proves that the sum is nonmaximal for every $\lambda>0$, so the threshold $S$ concerns the particular witness $(0,0)$, not the occurrence of nonmaximality. 

\section{Conclusions}
\label{sec:conclusion}

% In this paper, we constructed counterexamples to Rockafellar's sum conjecture, thereby providing the complete disproof of the conjecture and resolving this long-standing problem after more than five decades. Specifically, we established a general construction theorem that computes the monotone polar of the proposed graphs, characterizes their maximal monotonicity and identifies when adding an everywhere-defined positive rank-one operator produces a nonmaximal sum. We verified its hypotheses for explicit operators on $c_0$ and transferred the resulting pair to standard $\ell^1$ through a bounded linear surjection. In both pairs, the two operators are maximally monotone and satisfy the original interior-domain condition, and their sum is not maximally monotone. We also proved a finite radial bound for the first operator in each pair.

In this paper, we constructed counterexamples to Rockafellar's sum conjecture, thereby providing the complete disproof of the conjecture and resolving this long-standing problem after more than five decades. Our general construction theorem computed the entire monotone polar of a class of graphs and characterized their maximality by the nonexistence of solutions to equations in the continuous dual. Our pullback theorem transferred counterexamples through bounded linear surjections. These mechanisms produced four classes of counterexample families by varying block weights and curves, prescribing affine value dimensions, changing the second operator and transferring the constructions to further Banach spaces. The explicit examples on $c_0$ and standard $\ell^1$ realized the two mechanisms, and pullback together with extension from closed subspaces gave counterexamples on every Banach space containing a closed subspace or admitting a quotient isomorphic to either space.

The structural consequences determined domain geometry and exact radial bounds, characterized reflexivity by a rank-one test in two classes of spaces, identified maximal extensions under pullback and computed exact Fitzpatrick values. The appendices supplied further parameter and product families, scalar and strictly monotone second operators, families using convex constraints and subdifferentials, and examples with prescribed radial bounds. They also related these constructions to actual convex hulls, extension of the auxiliary curve, quotient norms, Fitzpatrick functions and finite graph averages, and more counterexample families.

\section*{Code availability.}
The Lean formalization of this paper is available at 
\par\noindent\url{https://github.com/Weifeng-Yang/RockafellarSumConjecture}.

\section*{Use of AI}
The author supplied the prior results and unsuccessful approaches, including some constructions based on blockwise triangular operators, together with a framework for positive rank-one perturbations, and guided their further development to investigate how maximality can fail under addition. The author also supplied obstruction results showing that a finite radial bound on a monotone graph need not survive maximal extension. Through iterative discussions of these materials, the author and GPT-5.6 Sol developed the general construction theorem and the pullback framework that organize the main results of the paper.
Under the author's direction and using the aforementioned results as inputs, GPT-5.6 Sol assisted in working out the detailed constructions and proof arguments, the four classes of counterexample families and their transfers, and the structural consequences concerning domain geometry, reflexivity, maximal extensions, and Fitzpatrick functions. It also assisted in developing the further parameter, second-operator, product, pullback, radial-bound, and representation results presented in the appendices.

% and directed the counterexample search and a general construction theorem characterizing maximality and showing when addition of an everywhere-defined rank-one monotone operator produces a nonmaximal sum, and. Under this direction, GPT-5.6 Sol developed explicit formulas and proof arguments for the counterexamples on $c_0$ and standard $\ell^1$, and assisted with literature checks, adversarial review and exposition. 

\appendix
\numberwithin{proposition}{section}
\numberwithin{corollary}{section}
\numberwithin{remark}{section}
\numberwithin{equation}{section}

\section*{Appendices}
These appendices develop further counterexample families and structural consequences of the construction theorem (Theorem~\ref{thm:endpoint-criterion}) and the pullback theorem (Theorem~\ref{lem:surjection-transport}). Appendix~\ref{app:parameters} varies the first operators and distinguishes their actual domain geometry from its closure and from the auxiliary Hilbert-space representation. Appendix~\ref{app:partners} constructs further second operators, including nonlinear scalar maps, strictly monotone maps, normal cones and subdifferentials, whose sums with the first operators remain nonmaximal. Appendix~\ref{app:pullback} generates families by transferring convex constraints, adjoining independent factors and changing coordinates, and studies the resulting dual quotient geometry. Appendix~\ref{app:fitzpatrick} transfers Fitzpatrick formulas, constructs examples with prescribed radial bounds and uses finite graph averages to describe nonunique maximal extensions.

\section{Further geometry of the first operators}
\label{app:parameters}

First, we give the criterion for membership of zero in the actual convex hull. The subsequent results provide further parameter choices, describe variation within parameter fibres, and relate Hilbert-space extensions to actual dual preimages and local graph membership.

\subsection{The actual convex hull}
\label{subsec:appendix-convex-hull}

We characterize membership of zero in the actual convex hull of the first operator's domain by the convex hull of the curve values. For the weighted families, we also construct two counterexample pairs with the same endpoint, linear maps and domain closure, with zero in the actual convex hull of only one domain.

\begin{corollary}[The actual convex hull]
\label{cor:family-convex-hull}
Let $\mathcal A$ have the graph in Eq. (\ref{eq:abs-full-graph}) under Assumption~\ref{ass:construction}. Then
\begin{equation}
0\in\operatorname{conv}(\dom\mathcal A)
\quad\Longleftrightarrow\quad
0\in\operatorname{conv}\{\omega(s):s>0\}.
\label{eq:family-convex-hull-criterion}
\end{equation}
For every fixed $w,B,\eta$ admitted in Proposition~\ref{cor:family-affine-values}, there are two finite-support Lipschitz curves with this same endpoint and the same linear maps such that zero belongs to the actual convex hull of one domain and not to that of the other. Both curves can be smooth and flat at zero, and the two domain closures are the same set in Eq. (\ref{eq:family-domain-closure}).
\end{corollary}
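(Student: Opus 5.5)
The plan is to reduce both sides of the equivalence to a statement about the single dual vector $p=\sum_i\lambda_ia_i$, using linearity of $L$, $r$ and $M$. A convex combination of domain points is $\sum_i\lambda_iLa_i=L\bigl(\sum_i\lambda_ia_i\bigr)$ with $a_i\in D$. So $0\in\operatorname{conv}(\dom\mathcal A)$ exactly when some convex combination $p$ of elements of $D$ satisfies $Lp=0$.

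\emph{Forward direction.} Suppose $Lp=0$ with $p=\sum_i\lambda_ia_i$ and $a_i\in D$. By Eq. (\ref{eq:abs-pairing}), $r(p)=\pair{Lp}{g}=0$. Also $0=\pair{Lp}{p}=r(p)^2-\|Mp\|_H^2$, hence $Mp=0$. Put $t_i=r(a_i)$. Since $Mp=\sum_i\lambda_iF(t_i)$, its $H_0$-component gives $\sum_i\lambda_i\,\omega(|t_i|-1)=0$. This places $0$ in $\operatorname{conv}\{\omega(s):s>0\}$.

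\emph{Converse.} Suppose $\sum_i\lambda_i\omega(s_i)=0$. I would symmetrize using (\ref{ass:realization}): set
\[
p=\sum_i\tfrac{\lambda_i}{2}\bigl(a^{1+s_i}+a^{-1-s_i}\bigr).
\]
This is a convex combination of elements of $D$ with $r(p)=0$ and $Mp=\bigl(0,\sum_i\lambda_i\omega(s_i)\bigr)=0$, so $p\in K$. It need not satisfy $Lp=0$. To repair this, I use that $D$ is invariant under translation by $K$. Replacing the single element $a^{1+s_1}$ by $a^{1+s_1}-\tfrac{2}{\lambda_1}p\in D$ changes the convex combination to $0$. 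Then $L$ of the new combination is $L0=0$, which gives the equivalence (\ref{eq:family-convex-hull-criterion}).

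\emph{The two curves.} Fix $w,B,\eta$ and take the cutoff curve (\ref{eq:family-linear-cutoff}) from Proposition~\ref{cor:family-weighted-endpoints}(2). It vanishes for large $s$, so $0=\omega(s)$ for some $s$, and the criterion puts $0$ in the actual convex hull. For the second curve, fix an index $n_0$ with $\eta_{n_0}\ne0$ and change only that coordinate to $\omega_{n_0}(s)=\eta_{n_0}\max\{1-cq_{n_0}s,\tfrac12\}$. This coordinate never changes sign and never vanishes, so every convex combination of curve values has nonzero $n_0$-coordinate, and the criterion excludes $0$. The modified curve keeps the features needed to apply Proposition~\ref{cor:family-weighted-endpoints}(1):
\begin{itemize}
\item its values still have finite support, so they lie in $\mathcal R_w$;
\item coordinatewise domination by $|\eta_n|$ preserves the bound $\|\omega(s)\|_2^2\le S$;
\item the $n_0$-coordinate is a truncation of a Lipschitz function, so the $\ell$-Lipschitz estimate survives;
\item the limit as $s\downarrow0$ is still $\eta$.
\end{itemize}
Hence Proposition~\ref{cor:family-affine-values} applies to both curves. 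The smooth, flat-at-zero variants use (\ref{eq:family-smooth-cutoff}), replacing the $n_0$-coordinate by $\eta_{n_0}\bigl(\tfrac12+\tfrac12\chi(dq_{n_0}s)\bigr)$. The domain closures agree because the closure formula (\ref{eq:family-domain-closure}) in Corollary~\ref{cor:family-domain-geometry} depends only on $w$, $B$ and $g$.

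I expect the main obstacle to be the converse repair step: noticing that the naive symmetric combination lands in $K$ rather than in $\ker L$, and that translating one label by a multiple of $p$ within its fibre fixes this. The curve modification is routine once one sees that a single sign-definite nonvanishing coordinate suffices to exclude $0$.
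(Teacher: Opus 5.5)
Your proof is correct, and the equivalence argument is the paper's. In the forward direction you extract $r(p)=0$ and $Mp=0$ from $\pair{Lp}{p}=r(p)^2-\|Mp\|_H^2$. The paper instead passes through $Ep=0$ and Eq.~(\ref{eq:abs-bilinear}), which is the same computation. In the converse you use the same symmetric average lying in $K$, then translate a single label within its fibre.

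The only real difference is how you build the second curve.
\begin{itemize}
\item The paper keeps a cutoff curve $\omega_0$ and reparametrizes it by the bounded clock $f(s)=ds/(d+s)$. Here $d$ is chosen so that the $n_0$-coordinate of $\omega_0$ stays bounded away from zero on $[0,d]$. Since $0<f<d$, $f(0)=0$ and $0<f'\le1$, composition transfers everything at once: the endpoint, the norm bound, the Lipschitz constant, finite support, smoothness and flatness at zero. This works for either cutoff choice, and indeed for any base curve with a coordinate of fixed sign near zero.
\item You instead floor the $n_0$-coordinate at $\tfrac12\eta_{n_0}$, or use $\tfrac12+\tfrac12\chi$ in the smooth case. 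This ties the construction to the explicit cutoff formulas, and each property has to be rechecked coordinatewise. Those checks are immediate, though, and your two curves differ in only one coordinate.
\end{itemize}
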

\begin{proof}
Suppose $\sum_{i=1}^m\theta_iLa_i=0$, where $a_i\in D$, $\theta_i>0$ and $\sum_i\theta_i=1$. Set $b=\sum_i\theta_i a_i$. Since $Lb=0$, Eq. (\ref{eq:abs-pairing}) gives $r(b)=0$, and hence $Eb=0$. Eq. (\ref{eq:abs-bilinear}) applied to $(b,b)$ then gives $Mb=0$. Taking its $H_0$ coordinate yields $\sum_i\theta_i\omega(|r(a_i)|-1)=0$, proving the forward implication.

Conversely, suppose $\sum_{i=1}^m\theta_i\omega(s_i)=0$, with $s_i>0$, $\theta_i>0$ and $\sum_i\theta_i=1$. By \textup{(\ref{ass:realization})}, choose admissible labels $a_i^+$ and $a_i^-$ at the parameters $1+s_i$ and $-1-s_i$, respectively. Their average
\[
b=\sum_{i=1}^m\frac{\theta_i}{2}(a_i^++a_i^-)
\]
satisfies $r(b)=0$ and $Mb=0$, so $b\in K$. Replace $a_1^+$ by $a_1^+-2b/\theta_1$. This remains in the same admissible fibre, and the corresponding convex combination of all the labels is now zero. Applying $L$ proves the reverse implication in Eq. (\ref{eq:family-convex-hull-criterion}).

For the two concrete choices, let $\omega_0$ be either cutoff curve in Proposition~\ref{cor:family-weighted-endpoints}. Its eventual extinction gives $0\in\operatorname{conv}\{\omega_0(s):s>0\}$. Choose $n_0$ with $\eta_{n_0}\ne0$ and $d>0$ so small that $(\omega_0(u))_{n_0}$ has the sign of $\eta_{n_0}$ and is bounded away from zero for $0\le u\le d$. Define
\begin{equation}
f(s)=\frac{ds}{d+s},\qquad \omega_1(s)=\omega_0(f(s)).
\label{eq:family-bounded-clock}
\end{equation}
Since $0<f(s)<d$, $f(0)=0$ and $0<f'(s)\le1$, the endpoint, norm bound and Lipschitz bound are preserved. Each positive-parameter value still has finite support, so Eq. (\ref{eq:family-parameter-label}) verifies \textup{(\ref{ass:realization})}. The coordinate $(\omega_1(s))_{n_0}$ has one strict sign for every $s>0$, excluding zero from every finite convex combination of these values. Eq. (\ref{eq:family-convex-hull-criterion}) gives the two asserted domain behaviors. Composition with $f$ preserves smoothness and flatness when $\omega_0$ is the smooth choice. Corollary~\ref{cor:family-domain-geometry} gives the identical domain closures and closed convex hulls.
\end{proof}

\subsection{Further parameter choices and their transfer}
\label{subsec:appendix-parameter-transfer}

Next, we generate further counterexample families by varying the coordinate signs, rescaling the curve parameter and replacing constant block weights by bounded sequences within each block. We also compute the growth of explicit dual labels near the excluded endpoint. Then, we transfer these families and their domain and value properties by surjective pullback.

\begin{remark}[Further parameter choices]
Arbitrary signs of the coordinates of $\eta$ are allowed throughout, since the estimates use absolute values and squares. For example, in the unweighted case choose $\eta_n=\varepsilon_nc n^{-p}$, where $\varepsilon_n\in\{-1,1\}$, $\frac12<p\le1$ and $c>0$ makes $\|\eta\|_2<1$. Choose $0<q<p-\frac12$ and $\delta>0$ with $\delta\|(c n^{q-p})_n\|_2\le1$. Then $\omega_n(s)=\eta_n\max\{1-\delta s n^q,0\}$ is admissible. With $N=(\delta s)^{-1/q}$, its absolute sum is bounded above by $c\sum_{n\le N}n^{-p}$ and below by $(\frac{c}{2})\sum_{n\le2^{-1/q}N}n^{-p}$. Consequently,
\[
\|\omega(s)\|_1\asymp
\begin{cases}
s^{-(1-p)/q},&p<1,\\
\log(1/s),&p=1,
\end{cases}
\qquad s\downarrow0,
\]
where $\asymp$ denotes two-sided bounds by fixed positive constants for sufficiently small $s$. The labels in Eq. (\ref{eq:family-parameter-label}) have exact norm $2|t|+1+\|\omega(|t|-1)\|_1$ in this case.

For any extinguishing curve above, replacing $\omega(s)$ by $\omega(as)$, $0<a\le1$, preserves the hypotheses and yields continuum many distinct first graphs. Indeed, equality for two different rescalings would imply dilation invariance. Iteration toward zero would make the curve constantly equal to $\eta$, contradicting extinction. Distinct curves give distinct constraint sets by Eq. (\ref{eq:family-parameter-label}), and therefore distinct graphs. Simultaneous primal and dual sign changes on a positive block, and coordinate bijections carrying the ordered infinite blocks to a new block partition, give the corresponding isometric coordinate realizations.

More generally, keep block zero unchanged. Let the real sequences $\alpha_n=(\alpha_{n,j})_{j\ge1}$ satisfy
\[
\sup_{n,j}|\alpha_{n,j}|<\infty,
\qquad \alpha_n\notin c_0(\N)\quad(n\ge1).
\]
The within-block formulas
\begin{equation}
\begin{aligned}
(M_\alpha a)_n&=\sum_{j\ge1}\alpha_{n,j}a_{n,j},\\
(E_\alpha a)_{n,j}&=\alpha_{n,j}^2a_{n,j}
 +2\alpha_{n,j}\sum_{k>j}\alpha_{n,k}a_{n,k}
\qquad(n\ge1)
\end{aligned}
\label{eq:family-within-block}
\end{equation}
give bounded maps into $H$ and $c_0(I)$, respectively, with the same symmetric pairing identity. Also $M_\alpha g=0$ and $E_\alpha g=h$, verifying \textup{(\ref{ass:bilinear})}. For $K_\alpha=\ker M_\alpha\cap\ker r$, a primal annihilator $z$ vanishes wherever $\alpha_{n,j}=0$, by testing $e_{n,j}$. Testing $e_{n,j}/\alpha_{n,j}-e_{n,k}/\alpha_{n,k}$ on the nonzero coordinates shows $z_n=c_n\alpha_n$. The condition $\alpha_n\notin c_0(\N)$ forces $c_n=0$. The unchanged block-zero tests then give precisely $\R h$, verifying \textup{(\ref{ass:annihilator})}.

Put $\rho_n=\sup_j|\alpha_{n,j}|>0$ and choose $j_n$ with $|\alpha_{n,j_n}|\ge\rho_n/2$. The exact range is
\begin{equation}
\operatorname{ran}(r,M_\alpha)
=\R\times\R\times
\left\{y\in\ell^2(\N):\sum_{n\ge1}\frac{|y_n|}{\rho_n}<\infty\right\}.
\label{eq:family-within-block-range}
\end{equation}
Necessity follows from $|(M_\alpha a)_n|\le\rho_n\sum_j|a_{n,j}|$. Sufficiency follows by replacing the positive-block sum in Eq. (\ref{eq:family-range-label}) with $\sum_n(y_n/\alpha_{n,j_n})e_{n,j_n}$, whose norm is at most $2\sum_n|y_n|/\rho_n$. Thus the cutoff formulas above verify \textup{(\ref{ass:curve-limit})} and \textup{(\ref{ass:realization})} for every $\eta\in\ell^2(\N)$ with $0<\|\eta\|_2<1$ and $\sum_n|\eta_n|/\rho_n=\infty$; Eq. (\ref{eq:family-within-block-range}) gives endpoint exclusion, and the same construction theorem applies. The row condition is needed here: a nonzero row $\alpha_n\in c_0(\N)$ would itself give a primal annihilator supported on that block and outside $\R h$.
\end{remark}

For the weighted families $A_{w,B,\omega}$, we transfer the counterexample pairs through bounded linear surjections. The pullback preserves affine value dimensions and transfers the domain closure and actual-convex-hull criterion.

\begin{remark}[Pullback of the parameter families]
Let $U$ be a real Banach space and let $Q:U\to Y$ be a bounded linear surjection with the preimage constant $C_Q$ in Theorem~\ref{lem:surjection-transport}. For $A=A_{w,B,\omega}$, set $T=Q^*AQ$ and $P_U=Q^*PQ$, so $P_Uu=\pair{u}{Q^*g}Q^*g$. That theorem gives maximality of both operators, and surjectivity gives $\dom T\ne\varnothing$ and $Q^*g\ne0$. For $(Qu,a)\in\gra A$, Eq. (\ref{eq:abs-pairing}) yields
\[
\pair{u}{Q^*a+P_Uu}
=\pair{Qu}{a+PQu}
=2r(a)^2-1-\|\omega(|r(a)|-1)\|_2^2>1-S.
\]
Together with $T(0)=\varnothing$ and $\dom P_U=U$, this gives the same missing polar point and interior-domain condition.

The value dimensions, domain closure and actual-convex-hull distinction transfer as follows:
\begin{equation}
\begin{aligned}
T(u)&=Q^*a+Q^*V_B\qquad(a\in A(Qu)),\\
\overline{\dom T}&=Q^{-1}\{x\in W_B:|\pair{x}{g}|\ge1\},\\
0\in\operatorname{conv}(\dom T)
&\quad\Longleftrightarrow\quad0\in\operatorname{conv}(\dom A).
\end{aligned}
\label{eq:family-pullback-properties}
\end{equation}
Indeed, the preimage bound gives $\|Q^*a\|\ge\|a\|_1/C_Q$, so $Q^*$ is an isomorphism onto its range and preserves the value dimensions. If $Qu\in\overline{\dom A}$, choose $x_j\in\dom A$ with $x_j\to Qu$ and lift $x_j-Qu$ to $v_j$ with $\|v_j\|\le C_Q\|x_j-Qu\|$. Then $u+v_j\in\dom T$ and $u+v_j\to u$, proving the closure identity; the reverse inclusion follows from continuity of $Q$. At every $u$ with $Qu\in W_B$ and $\pair{Qu}{g}\in\{-1,1\}$, every graph sequence $(u_j,Q^*a_j)$ converging to $u$ in its primal coordinate has $\|Q^*a_j\|\to\infty$, by Corollary~\ref{cor:family-domain-geometry} and the adjoint lower bound.

Applying $Q$ proves the forward convex-hull implication. Conversely, lift a finite convex combination of domain points equal to zero. Its averaged lift lies in $\ker Q$; subtracting that average divided by one positive coefficient from the corresponding lift preserves its image and makes the convex combination zero. The same lifting argument and injectivity of $Q^*$ show that distinct first graphs remain distinct after pullback. In particular, these statements apply to the specified surjection onto $c_0(I)$ used in Example~\ref{thm:L}.
\end{remark}

\subsection{Variation within parameter fibres}
\label{subsec:appendix-fibre-variation}

We transfer the meagreness, empty relative interior and density assertions in Corollary~\ref{cor:family-domain-geometry} to the domains of the pulled-back weighted operators. Within the original weighted family, we also show that minimum value norms can become unbounded along a fixed parameter fibre on sequences converging to a domain point.

\begin{remark}[Meagre domains under pullback]
\label{rem:meagre-pullback}
Let $Q:U\to c_0(I)$ be a bounded linear surjection and let $A=A_{w,B,\omega}$ be as in Corollary~\ref{cor:family-domain-geometry}. Set $Z_B=Q^{-1}W_B$. The domain of $Q^*AQ$ is meagre in $Z_B$, has empty relative interior, and is dense in
\[
\{u\in Z_B:|\pair{Qu}{g}|>1\}.
\]
Indeed, the restriction $Q:Z_B\to W_B$ is an open surjection between Banach spaces. The preimage of each closed set $C_m$ in the proof of that corollary is closed with empty relative interior: any nonempty open subset would have an open image contained in $C_m$. These preimages cover the domain. Openness also transfers the density assertion, and the Baire category theorem gives empty relative interior.
\end{remark}

The next remark returns to the original weighted family and locates unbounded minimum value norms within a fixed parameter fibre, with the limiting point still in the domain.

\begin{remark}[Unbounded minimum value norms within a parameter fibre]
\label{rem:fibre-minimum-norm}
For every $x\in\dom A_{w,B,\omega}$, there are $x_N\in\dom A_{w,B,\omega}$ such that
\[
x_N\longrightarrow x,\qquad
\pair{x_N}{g}=\pair{x}{g},\qquad
\inf_{b\in A_{w,B,\omega}(x_N)}\|b\|_1\longrightarrow+\infty.
\]
To prove this, choose $a\in A_{w,B,\omega}(x)$, fix a positive block $n$, and put
\[
k^{(N)}=\frac1{\sqrt N}\sum_{j=3}^{2N+2}(-1)^{j-3}e_{n,j}.
\]
The block sum and $r(k^{(N)})$ vanish, so $k^{(N)}\in K$. The skew modification vanishes on this vector, and the triangular formula gives
\[
L_{w,B}k^{(N)}=\frac{w_n^2}{\sqrt N}\sum_{j=3}^{2N+2}e_{n,j}.
\]
Thus $x_N=x+L_{w,B}k^{(N)}$ has the asserted limit and parameter. Its entire value is $a+k^{(N)}+V_B$. Since every vector in $V_B$ vanishes at the coordinates used in $k^{(N)}$, every $b$ in this value satisfies $\|b\|_1\ge2\sqrt N-\|a\|_1$. This proves the claim, including when $B=\varnothing$ and the operator is single-valued.
\end{remark}

\subsection{Hilbert-space extensions and actual dual preimages}
\label{subsec:appendix-profile-extension}

The formula for the monotone polar distinguishes extending the curve in the Hilbert space from adding points to the operator graph. The following consequence makes this distinction explicit for every instance of the construction theorem.

\begin{corollary}[Extension of the curve and of the operator graph]
\label{cor:profile-extension-recovered}
Under Assumption~\ref{ass:construction}, the map $F$ has a unique $1$-Lipschitz extension to $\R$, given by
\[
\widehat F(t)=
\begin{cases}F(t),&|t|>1,\\(t,\eta),&|t|\le1.\end{cases}
\]
Its graph is maximal among subsets of $\R\times H$ satisfying $\|y-y'\|_H\le|t-t'|$ for every two points $(t,y),(t',y')$. If Eq. (\ref{eq:abs-endpoint-exclusion}) holds, then
\[
\{(La,a):a\in X^*,\ Ma=\widehat F(r(a))\}=G,
\]
even though $\gra\widehat F$ strictly contains $\gra F$.
\end{corollary}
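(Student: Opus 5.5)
The corollary has three parts: existence and uniqueness of the $1$-Lipschitz extension, maximality of its graph, and the equality of graphs under Eq. (\ref{eq:abs-endpoint-exclusion}). I take them in that order.

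\emph{Existence.} The proof of Theorem~\ref{thm:endpoint-criterion}\textup{(\ref{thm:polar-part})} already shows that $\widehat F(t)=(\chi(t),\omega(\zeta(t)))$ is $1$-Lipschitz on $\R$, where $\omega$ is extended by $\omega(0)=\eta$. So existence needs nothing new.

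\emph{Uniqueness and maximality.} Both follow from one pointwise computation, which I expect to be the only real step. Let $(\tau,y)\in\R\times H$ with $y=(b,z)$, and suppose $\|y-F(t)\|_H\le|t-\tau|$ for every $t\in J$.

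If $|\tau|>1$, take $t=\tau$; this forces $y=F(\tau)$.

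If $|\tau|\le1$, let $t\to1^+$ and $t\to-1^-$, using $\omega(s)\to\eta$ from \textup{(\ref{ass:curve-limit})}. This gives
\[
(b-1)^2+\|z-\eta\|^2\le(1-\tau)^2,\qquad (b+1)^2+\|z-\eta\|^2\le(1+\tau)^2.
\]
Form the convex combination with weights $\frac{1+\tau}{2}$ and $\frac{1-\tau}{2}$, exactly as in Case~2 of that proof with $\nu=0$. The result is
\[
(b-\tau)^2+\|z-\eta\|^2\le0,
\]
so $y=(\tau,\eta)=\widehat F(\tau)$.

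Any $1$-Lipschitz extension $\widetilde F$ of $F$ satisfies the hypothesis at every $\tau$, so $\widetilde F=\widehat F$. For maximality, let $\Gamma\supseteq\gra\widehat F$ be a subset of $\R\times H$ with the $1$-Lipschitz property, and let $(\tau,y)\in\Gamma$. Then $(\tau,y)$ is $1$-Lipschitz related to $\gra F$, hence $y=\widehat F(\tau)$. Therefore $\Gamma=\gra\widehat F$. Strict containment $\gra F\subsetneq\gra\widehat F$ holds because the points with $|t|\le1$ are added and $J\ne\R$.

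\emph{Equality of graphs.} By Theorem~\ref{thm:endpoint-criterion}\textup{(\ref{thm:polar-part})}, the displayed set
\[
\{(La,a):a\in X^*,\ Ma=\widehat F(r(a))\}
\]
is exactly $G^\mu$. Under Eq. (\ref{eq:abs-endpoint-exclusion}), part \textup{(\ref{thm:maximality-part})} gives $G=G^\mu$, which proves the equality. Directly: any $a$ with $|r(a)|\le1$ and $Ma=(r(a),\eta)$ would solve Eq. (\ref{eq:abs-endpoint-exclusion}) with $p=a$ and $\tau=r(a)$, which is excluded. The remaining $a$ satisfy $Ma=F(r(a))$, so they lie in $D$.
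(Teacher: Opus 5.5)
Your proposal is correct and follows essentially the paper's route. Existence comes from the decomposition $\widehat F=(\chi,\omega\circ\zeta)$ in the proof of Theorem~\ref{thm:endpoint-criterion}, uniqueness is forced by the two limiting values $(\pm1,\eta)$ in the Hilbert space $H$, and the graph equality follows from Eq.~(\ref{eq:abs-endpoint-exclusion}) (equivalently $G=G^\mu$).

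The one difference is in uniqueness and maximality. You derive uniqueness from the weighted sum of the squared endpoint inequalities (Case~2 with $\nu=0$), which also shows that every point $1$-Lipschitz related to $\gra F$ lies on $\gra\widehat F$, so maximality comes with it. The paper instead uses the equality case of the Hilbert-space triangle inequality for uniqueness, and gets maximality by testing a new point against $(t,\widehat F(t))$ itself.
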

\begin{proof}
Extend $\omega$ to zero by $\omega(0)=\eta$. Put $\chi(t)=\max\{-1,\min\{1,t\}\}$ and $\zeta(t)=\max\{|t|-1,0\}$. Then $\widehat F(t)=(\chi(t),\omega(\zeta(t)))$. The intervals cut out by $-1$ and $1$ give $|\chi(t)-\chi(u)|+|\zeta(t)-\zeta(u)|\le|t-u|$. Hence
\[
\|\widehat F(t)-\widehat F(u)\|_H^2
\le |\chi(t)-\chi(u)|^2+|\zeta(t)-\zeta(u)|^2
\le |t-u|^2.
\]
Any $1$-Lipschitz extension takes the endpoint values $(-1,\eta)$ and $(1,\eta)$. For $-1<t<1$, its distances to these values are at most $t+1$ and $1-t$, whose sum equals their distance. Equality in the Hilbert-space triangle inequality forces the value $(t,\eta)$, proving uniqueness. A further point $(t,y)$ satisfying the displayed inequality with the full graph must satisfy $\|y-\widehat F(t)\|_H\le0$, proving maximality of that graph. Finally, the only new parameter values are $|t|\le1$, and Eq. (\ref{eq:abs-endpoint-exclusion}) excludes their preimages under $(r,M)$.
\end{proof}

\subsection{Local graph membership and convex relaxation}
\label{subsec:appendix-local-convex}

% Bo\c{t} \cite{bot2026fpv} obtains failure of type~(FPV) from nonconvexity of the domain closure.
For the constructed first operators, we give an explicit point outside the graph that satisfies the polar inequalities on an open halfspace meeting the domain. We compute its global Fitzpatrick value, defined in Eq. (\ref{eq:cor-fitzpatrick}), as $2S$, quantifying the violation of the polar condition on the full graph.

\begin{corollary}[A local polar point with finite global violation]
\label{cor:local-polar-recovered}
Let $\mathcal A$ be as in Theorem~\ref{thm:endpoint-criterion}, with Eq. (\ref{eq:abs-endpoint-exclusion}). Set $U=\{x\in X:\pair{x}{g}>-1\}$ and $p=-Sg$. Then $U$ is open and convex, $0\in U$, $U\cap\dom\mathcal A\ne\varnothing$, and
\[
\pair{-x}{p-a}>0\quad((x,a)\in\gra\mathcal A,\ x\in U),
\qquad \mathsf F_{\mathcal A}(0,p)=2S.
\]
The point $(0,p)$ lies outside $\gra\mathcal A$. Thus the local graph-membership implication defining type~(FPV) fails with this explicit open set and a finite global Fitzpatrick value.
\end{corollary}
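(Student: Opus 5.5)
Every claim will be read off the pairing identities in Eq.~(\ref{eq:abs-pairing}). For $a\in D$ we write $t=r(a)$, so $|t|>1$, $\pair{La}{g}=t$ and $Ma=F(t)$.

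\emph{Openness, convexity and $0\in U$.} The set $U$ is an open halfspace defined by the continuous functional $g$, so it is open and convex. Since $\pair{0}{g}=0>-1$, we have $0\in U$.

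\emph{$U$ meets $\dom\mathcal A$.} Take $a=a^2$ from Assumption~\ref{ass:construction}\textup{(\ref{ass:realization})}. Then $\pair{La^2}{g}=2>-1$, so $La^2\in U\cap\dom\mathcal A$.

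\emph{Local polar inequality.} Let $(x,a)=(La,a)$ be a graph point with $x\in U$. Since $\pair{x}{g}=t$ and $|t|>1$, the condition $x\in U$ forces $t>1$. Expanding gives
\[
\pair{-x}{p-a}=S\,\pair{La}{g}+\pair{La}{a}=St+t^2-1-\|\omega(t-1)\|_{H_0}^2 .
\]
Using $\|\omega\|^2\le S$, this is at least $St+t^2-1-S=(t-1)(t+1+S)>0$, as required.

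\emph{The point $(0,p)$ is not in the graph.} Theorem~\ref{thm:endpoint-criterion}(\ref{thm:sum-part}) gives $0\notin\dom\mathcal A$, so $(0,p)\notin\gra\mathcal A$.

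\emph{Fitzpatrick value.} By Eq.~(\ref{eq:cor-fitzpatrick}),
\[
\mathsf F_{\mathcal A}(0,p)=\sup_{a\in D}\{\pair{La}{p}-\pair{La}{a}\}=\sup_{|t|>1}\{-St-t^2+1+W(|t|-1)\},
\]
where $W(s)=\|\omega(s)\|_{H_0}^2$. Every $t\in J$ is realized by Assumption~\ref{ass:construction}\textup{(\ref{ass:realization})}, so the supremum runs over all of $J$.

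On the branch $t>1$, the expression is at most $-S-1+1+S-\text{positive}<0$.

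On the branch $t=-u$ with $u>1$, it equals $Su-u^2+1+W(u-1)$. This is bounded by $Su-u^2+1+S$, which is decreasing for $u\ge1$ because its derivative is $S-2u<0$. Hence it is $<2S$ for $u>1$.

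As $u\downarrow1$, we have $W(u-1)\to S$ by Assumption~\ref{ass:construction}\textup{(\ref{ass:curve-limit})}. So the expression tends to $S-1+1+S=2S$, and the supremum is exactly $2S$, not attained.

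\emph{Failure of type (FPV).} On $U$, the point $(0,p)$ satisfies every polar inequality against graph points with primal in $U$, and $0\in U$. Yet $(0,p)\notin\gra\mathcal A$, which violates the (FPV) implication.

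The main obstacle is the supremum computation on the negative branch. The delicate parts are identifying that the supremum is approached as $t\to-1^-$ and using the limit $W\to S$ rather than the bound $W\le S$ alone. The rest is algebraic bookkeeping with the pairing identities.
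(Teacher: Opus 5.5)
Your proof is correct and follows essentially the same route as the paper. Both arguments read everything off the pairing identity, use $t>1$ on $U$ to obtain $(t-1)(t+1+S)>0$, and get the Fitzpatrick value $2S$ as the non-attained limit along $t\to-1^-$ using $W\to S$. The paper bounds both branches at once via $1+W(v-1)-t^2-St\le 2S-(v-1)(v+1-S)$ with $v=|t|$. You instead split into the two branches and use a monotonicity argument on the negative one, which is a cosmetic difference.
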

\begin{proof}
Continuity of $g$ gives openness and convexity, and an actual parameter $t=2$ gives domain intersection. Write $W(s)=\|\omega(s)\|_{H_0}^2$. For $x=La\in U$, the inequalities $|t|>1$ and $t=\pair{x}{g}>-1$ imply $t>1$. Hence
\[
\pair{-x}{p-a}=t^2-1-W(t-1)+St
\ge(t-1)(t+1+S)>0.
\]
For an arbitrary parameter, put $v=|t|>1$. Then
\[
\pair{x}{p}-\pair{x}{a}
=1+W(v-1)-t^2-St
\le2S-(v-1)(v+1-S)<2S.
\]
Actual parameters $t=-1-s$, $s\downarrow0$, make this expression tend to $2S$, proving the asserted supremum. Finally $0\notin\dom\mathcal A$ proves nonmembership. The type~(FPV) implication requires membership whenever an open convex set meets the domain and the point is monotonically related to the graph restricted to that set.
\end{proof}

By an affine change of each concrete first operator on $c_0(I)$ and standard $\ell^1$, we construct maximally monotone operators with a strict gap between optimization over the domain in the unit ball and its convex relaxation. We also locate a point outside the domain with a finite Fitzpatrick value.

\begin{corollary}[A gap under convex relaxation of the domain]
\label{cor:convex-relaxation-recovered}
On each of $c_0(I)$ and standard $\ell^1$, there are a maximally monotone operator $\widetilde M$, a norm-one functional $\varphi$ and a unit vector $e$ with $\pair{e}{\varphi}=1$ such that
\[
0\le\sup_{\substack{z\in\dom\widetilde M\\\|z\|\le1}}\pair{z}{\varphi}\le\frac38,
\qquad
\inf_{p\in X^*}\left\{\sup_{z\in\dom\widetilde M}\pair{z}{p}+\|\varphi-p\|\right\}=1.
\]
Moreover, $0,\frac32e\in\dom\widetilde M$, whereas
\[
\dist\left(\frac34e,\dom\widetilde M\right)\ge\frac38,
\qquad \mathsf F_{\widetilde M}\left(\frac34e,0\right)=\sigma.
\]
\end{corollary}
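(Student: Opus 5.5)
The plan is to realise $\widetilde M$ on $c_0(I)$ as an affine change of the operator $A$ of Example~\ref{thm:C}, and then to pull it back to standard $\ell^1$.

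\textbf{The affine change.} Fix two domain points
\[
x_-=La^{-t_0},\qquad x_+=La^{t_0}\qquad(t_0>1),
\]
from the explicit labels in Eq.~(\ref{eq:D9}). Put $d=x_+-x_-$, $e=d/\|d\|_\infty$ and $\lambda=\tfrac23\|d\|_\infty$. Define $\widetilde M$ through the affine bijection $z\mapsto x_-+\lambda z$:
\[
\gra\widetilde M=\{(z,\lambda a):(x_-+\lambda z,a)\in\gra A\}.
\]
Both monotonicity and the monotone polar transform by the same positive factor $\lambda^2$ under this map. Hence $\widetilde M$ is maximally monotone, by Example~\ref{thm:C} and the polar characterisation of maximality.

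By construction $0\in\dom\widetilde M$ and $\tfrac32 e\in\dom\widetilde M$. The point $\tfrac34 e$ corresponds to the midpoint $x_m=(x_++x_-)/2=Lp$, where $p=(a^{t_0}+a^{-t_0})/2$ has $r(p)=0$. Every domain point of $A$ has $|\pair{x}{g}|>1$ and $\|g\|_1=2$. So $\dist(x_m,\dom A)\ge\tfrac12$, and after rescaling
\[
\dist\bigl(\tfrac34e,\dom\widetilde M\bigr)\ge\frac{1}{2\lambda}.
\]
I would choose $t_0$ (and, if needed, pick $d$ along $g$ by exploiting the fibre freedom $LK$) so that $1/(2\lambda)\ge\tfrac38$.

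\textbf{The Fitzpatrick value and the functional $\varphi$.} For the Fitzpatrick value, I would use $\mathsf F_{\widetilde M}(\tfrac34e,0)=\mathsf F_A(x_m,0)$ after the scaling and expand with Eq.~(\ref{eq:C2}):
\[
\pair{Lp}{a}-\pair{La}{a}=\text{(quadratic in $t=r(a)$)}+2\bigl(\omega(|t|-1),\omega(t_0-1)\bigr)-W(|t|-1).
\]
Then I would take the supremum over $t$ and over the fibre, as in Corollary~\ref{cor:fitzpatrick-values}. The goal is to tune $t_0$ (or to slide along fibres) so that this supremum equals $\sigma$, with the extreme value attained in the limit $t\to\pm1$.

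The functional $\varphi$ will be a normalised multiple of $g$ adjusted by a coordinate functional, chosen so that $\pair{e}{\varphi}=1$. The bound $\sup\pair{z}{\varphi}\le\tfrac38$ over the domain in the unit ball then follows from the two-halfspace structure of the domain, $|\pair{x}{g}|>1$. The lower bound $0$ comes from $z=0\in\dom\widetilde M$.

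\textbf{The relaxation formula.} The infimum formula is the easy part. By Corollary~\ref{cor:family-domain-geometry} with $B=\varnothing$, $W_B=c_0(I)$, so the closed convex hull of $\dom A$ is the whole space; affine images preserve this. Hence
\[
\sup_{z\in\dom\widetilde M}\pair{z}{p}=+\infty\qquad(p\ne0),
\]
so the infimum is attained only at $p=0$ and equals $\|\varphi\|=1$.

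\textbf{Transfer to $\ell^1$.} I would set $\widetilde M_Z=Q^*\widetilde MQ$, using the surjection of Lemma~\ref{lem:specified-quotient}, which is maximally monotone by Theorem~\ref{lem:surjection-transport}. I would take $\varphi_Z=Q^*\varphi$, which has norm one by Eq.~(\ref{eq:L3}), and $e_Z=\mathcal R(e)$ suitably normalised. The supremum bound transfers because $\|Q\|=1$, and the Fitzpatrick value transfers by the adjoint identity. The distance bound transfers because $\|Qu\|_\infty\le\|u\|_1$.

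\textbf{Main obstacle.} The delicate step is the simultaneous choice of $x_\pm$ and $\varphi$ giving exactly the constants $\tfrac38$ and $\sigma$ while $\tfrac32e$ stays in the domain. In particular, $\mathcal R(e)$ need not have norm one with $\pair{e_Z}{\varphi_Z}=1$. I would first try to pick $x_\pm$ so that $d$ is a multiple of a single coordinate vector occurring among the $q_n$, so that $e_Z$ is a unit basis vector of $\ell^1$.
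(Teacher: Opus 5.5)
\paragraph{Verdict.} Your overall shape is right: an affine change puts two domain points at $0$ and $\tfrac32e$ and a non-domain point at $\tfrac34e$, and the result is then pulled back. You also correctly reduce the Fitzpatrick value to $\mathsf F_A(x_m,0)$. However, the step where you ``tune $t_0$ or slide along fibres'' to make this value equal to $\sigma$ has a genuine gap, and it also defeats your distance plan.

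\paragraph{The Fitzpatrick step.} Your expansion of $\pair{Lp}{a}-\pair{La}{a}$ uses only the symmetrized identity Eq.~(\ref{eq:C2}). The single term $\pair{Lp}{a}$ involves the skew part of $L$, so it is not constant on the fibre $a^t+K$. For $k\in K$ one has $\pair{Lk}{k}=0$ and $\pair{La}{k}+\pair{Lk}{a}=0$. Hence replacing $a$ by $a+\theta k$ changes $\pair{x_m-La}{a}$ by exactly $\theta\pair{x_m}{k}$. It follows that $\mathsf F_A(x_m,0)=+\infty$ unless $x_m$ annihilates $K$, that is, $x_m=\mu h$ by Lemma~\ref{lem:coordinate-annihilator}. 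In that case
\[
\mathsf F_A(\mu h,0)=\sup_{|t|>1}\{\mu t+1+W(|t|-1)-t^2\}\ge|\mu|+\sigma.
\]
So $\mathsf F_{\widetilde M}(\tfrac34e,0)=\sigma$ forces $x_m=0$ exactly, and there is nothing to tune.

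\paragraph{Collision with the distance bound.} Suppose $x_-=-x_+$. Since $\ker L=\{0\}$, the labels $a$ and $-a$ must both be admissible. This forces $\omega_{|t_0|-1}=0$, i.e.\ $|t_0|\ge2$. Consequently $\|x_+\|_\infty\ge|t_0|/\|g\|_1\ge1$ and $\lambda=\tfrac43\|x_+\|_\infty$. The distance is then $\dist(\tfrac34e,\dom\widetilde M)=\dist(0,\dom A)/\lambda=\tfrac{3}{8\|x_+\|_\infty}$.

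Taking $t_0$ near $1$ and shrinking $d$ through $LK$ is therefore impossible. The explicit labels, with $\|La^2\|_\infty=8$, give only $\tfrac3{64}$. Your isotropic dilation can work only in the extremal case $\|x_+\|_\infty=1$, $\pair{x_+}{g}=2$, and you did not produce such a point. One does exist: $x_+=e_{0,1}-e_{0,2}-e_{0,3}+e_{0,4}-e_{0,5}=L(5e_{0,1}-7e_{0,2}+5e_{0,3}-3e_{0,4}+e_{0,5})$. With $x_-=-x_+$, $e=x_+$, $\lambda=\tfrac43$ and $\varphi=g/2$, your argument closes. This $e$ also occurs among the $q_n$, so the $\ell^1$ transfer goes through.

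\paragraph{How the paper avoids this.} The paper uses an anisotropic isomorphism instead of a scalar dilation, which removes the knife-edge. It takes $x_2=La^2$, $\varphi=g/2$, $e=e_{0,1}-e_{0,2}$ and $r_0=\tfrac34$. It then sets $\mathcal Sz=-x_2\pair{z}{\varphi}/r_0+z-\pair{z}{\varphi}e$, which is the identity on $\ker\varphi$, and defines $\widetilde M(z)=\mathcal S^*M(x_2+\mathcal Sz)$.

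Under this map $0\mapsto x_2$, $\tfrac32e\mapsto-x_2$ and $\tfrac34e\mapsto0$. The domain condition becomes $\pair{z}{\varphi}<\tfrac38$ or $\pair{z}{\varphi}>\tfrac98$, regardless of $\|x_2\|_\infty$. This gives the supremum bound and the distance bound at once, and $\mathsf F_{\widetilde M}(\tfrac34e,0)=\mathsf F_M(0,0)=\sigma$. On $\ell^1$ the same formulas are applied directly to $T$, with $x_2=8e_i$ and $e=e_j$.

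Your relaxation argument via $\overline{\operatorname{conv}}(\dom A)=c_0(I)$ is valid. The paper's bound is more direct: it uses only $e\in\operatorname{conv}(\dom\widetilde M)$ and $\|e\|=1$.
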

\begin{proof}
Write $(M,k)=(A,g)$ on $c_0(I)$ and $(M,k)=(T,Q^*g)$ on $\ell^1$. The actual label $a_2=-2e_{0,1}+3e_{0,3}$ gives
\[
x_2=La_2=-6e_{0,1}-8e_{0,2}-3e_{0,3},\qquad \|x_2\|_\infty=8,\qquad \pair{x_2}{g}=2.
\]
Both signs are domain points because $\omega_1=0$. On $\ell^1$, choose the index $i$ with $q_i=\frac{x_2}{8}$ and instead use $x_2=8e_i$. Put $\varphi=\frac{k}{2}$. On $c_0(I)$ take $e=e_{0,1}-e_{0,2}$, and on $\ell^1$ take $e=e_j$ with $q_j=e_{0,1}-e_{0,2}$. The adjoint isometry in the construction gives $\|\varphi\|=\|e\|=\pair{e}{\varphi}=1$ in both spaces.

Set $r_0=\frac34$ and $\mathcal Sz=-x_2\pair{z}{\varphi}/r_0+z-\pair{z}{\varphi}e$. Writing $z=\alpha e+w$, $w\in\ker\varphi$, shows that $\mathcal Sz=-\alpha x_2/r_0+w$ and $\pair{\mathcal Sz}{\varphi}=-\alpha/r_0$. These equations give a bounded inverse. Consequently $\widetilde M(z)=\mathcal S^*M(x_2+\mathcal Sz)$ is maximally monotone by the isomorphism case of Theorem~\ref{lem:surjection-transport} and translation.

Every $z\in\dom\widetilde M$ satisfies $|2-2\pair{z}{\varphi}/r_0|>1$, hence $\pair{z}{\varphi}<\frac38$ or $\pair{z}{\varphi}>\frac98$. The second case is impossible in the unit ball. The preimages of $x_2,-x_2$ are $0,\frac{3e}{2}$, so the first supremum has the stated bounds. Since $e\in\operatorname{conv}(\dom\widetilde M)$, for every $p\in X^*$,
\[
\sup_{z\in\dom\widetilde M}\pair{z}{p}+\|\varphi-p\|
\ge\pair{e}{p}+\pair{e}{\varphi-p}=1.
\]
The choice $p=0$ attains this bound. The same two alternatives give the distance estimate at $d=\frac{3e}{4}$. Since $x_2+\mathcal Sd=0$, the adjoint identity gives
\[
\mathsf F_{\widetilde M}(d,0)
=\sup_{(x,a)\in\gra M}\pair{-x}{a}
=\mathsf F_M(0,0)=\sigma.
\]
The last value follows from Eq. (\ref{eq:abs-pairing}) and the realized parameters tending to $1$, and is preserved by the surjection defining $T$.
\end{proof}

\section{Second operators and nonmaximal sums}
\label{app:partners}

We vary the partners of the constructed first operators and their affine transforms, using the pairing identity and missing primal points supplied by the construction theorem. The resulting families include arbitrarily small perturbations, strictly monotone partners, normal cones and convex penalties.

\subsection{Scalar second operators}
\label{subsec:counterexample-families}

Proposition~\ref{cor:positive-perturbations} gives nonmaximal sums for all positive rank-one coefficients. Now, we replace the linear scalar dependence by a nonlinear one. The first result uses the origin as its missing polar point, and the second allows that point to vary so that arbitrarily small bounded partners are included.

We construct nonlinear scalar partners $B_f$ for the same first operator $\mathcal A$ by imposing endpoint inequalities on $f$. The pairing identity then makes the origin a missing polar point of $\mathcal A+B_f$. The choices below include a smooth partner with bounded range.

\begin{proposition}[Scalar second operators]
\label{cor:scalar-partners}
Let $\mathcal A$ be as in Theorem~\ref{thm:endpoint-criterion}, and suppose Eq. (\ref{eq:abs-endpoint-exclusion}) holds. Let $f:\R\to\R$ be continuous and nondecreasing, with
\[
f(1)\ge S,\qquad f(-1)\le-S.
\]
Then $B_f x=f(\pair{x}{g})g$ is maximally monotone with full domain, and
\[
(0,0)\in\bigl(\gra(\mathcal A+B_f)\bigr)^\mu\setminus\gra(\mathcal A+B_f).
\]
In particular, this applies to
\[
\begin{aligned}
f(t)&=\kappa\tanh t,&\qquad \kappa&\ge\frac{S}{\tanh1},\\
f(t)&=\kappa|t|^{q-2}t,& q>1,\qquad \kappa&\ge S,
\end{aligned}
\]
where the second function is defined to be zero at $t=0$. The first choice gives a smooth nonlinear second operator with globally bounded range. Both choices are derivatives of finite convex functions composed with $x\mapsto\pair{x}{g}$.
\end{proposition}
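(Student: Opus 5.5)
The proof splits into three steps: maximality of $B_f$, the polar inequality at the origin, and checking the two explicit choices of $f$.

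\emph{Step 1: maximality of $B_f$.} Since $f$ is continuous and $x\mapsto\pair{x}{g}$ is norm continuous, $B_f:X\to X^*$ is norm-to-norm continuous with full domain. Monotonicity follows from
\[
\pair{x-y}{B_fx-B_fy}=\bigl(\pair{x}{g}-\pair{y}{g}\bigr)\bigl(f(\pair{x}{g})-f(\pair{y}{g})\bigr)\ge0,
\]
because $f$ is nondecreasing. Maximality then follows from the same hemicontinuity argument used for $B$ in Proposition~\ref{cor:positive-perturbations}(2). Take $(z,q)$ in the polar and test at $(z+tv,B_f(z+tv))$ for $t>0$. This gives $\pair{v}{q-B_f(z+tv)}\le0$. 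Letting $t\downarrow0$ and replacing $v$ by $-v$ yields $q=B_fz$. Because $\dom B_f=X$, Eq. (\ref{eq:original-cq}) holds as soon as $\dom\mathcal A\ne\varnothing$, and Theorem~\ref{thm:endpoint-criterion}(iii) provides this.

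\emph{Step 2: $(0,0)$ is in the polar but not in the graph.} Every point of $\gra(\mathcal A+B_f)$ has the form $(La,a+f(t)g)$ with $a\in D$ and $t=r(a)$, since $\pair{La}{g}=r(a)$ by Eq. (\ref{eq:abs-pairing}). From Eq. (\ref{eq:abs-pairing}) and $Ma=F(t)$,
\[
\pair{La}{a+f(t)g}=t^2-1-\|\omega(|t|-1)\|_{H_0}^2+t f(t)\ge t^2-1-S+tf(t).
\]
I then split on the sign of $t$.
\begin{itemize}
\item If $t>1$, monotonicity gives $f(t)\ge f(1)\ge S$, so $tf(t)\ge tS>S$.
\item If $t<-1$, monotonicity gives $f(t)\le f(-1)\le-S$, so again $tf(t)\ge|t|S>S$.
\end{itemize}
In both cases the pairing is strictly greater than $t^2-1>0$. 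Therefore $\pair{0-La}{0-a-f(t)g}>0$ for every graph point, so $(0,0)$ lies in the polar. It is not in the graph because $0\notin\dom\mathcal A=\dom(\mathcal A+B_f)$ by Theorem~\ref{thm:endpoint-criterion}(iii).

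\emph{Step 3: the explicit choices.} Both functions are odd, so it suffices to check $f(1)\ge S$.
\begin{itemize}
\item For $f(t)=\kappa\tanh t$: it is smooth and increasing, $f(1)=\kappa\tanh1\ge S$, and its range lies in $(-\kappa,\kappa)$. Hence $B_f$ has bounded range. It is the derivative of the finite convex function $\kappa\log\cosh t$.
\item For $f(t)=\kappa|t|^{q-2}t$ with $q>1$: it is continuous, including at $0$ since $|f(t)|=\kappa|t|^{q-1}\to0$, and it is increasing. Also $f(1)=\kappa\ge S$. It is the derivative of the convex function $\frac{\kappa}{q}|t|^q$.
\end{itemize}

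I expect no serious obstacle. The only point needing care is continuity at $0$ in the power case, and the first bullet of Step 3 settles it.
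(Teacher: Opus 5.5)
Your proposal is correct and follows essentially the same route as the paper. Both prove maximality of $B_f$ by the continuous-monotone polar argument of Proposition~\ref{cor:positive-perturbations}(2). Both then use the pairing identity with the scalar bound $tf(t)\ge S|t|$ for $|t|>1$ to show that $(0,0)$ lies in the polar of the sum graph. Both verify the two explicit choices through their convex antiderivatives composed with $\pair{\cdot}{g}$.
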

\begin{proof}
The scalar monotonicity of $f$ makes $B_f$ continuous and monotone, so the direct polar argument in Proposition~\ref{cor:positive-perturbations}(2) proves its maximality. For $(x,a)=(La,a)\in\gra\mathcal A$, put $t=r(a)$ and $W=\|\omega(|t|-1)\|_{H_0}^2\le S$. Since $|t|>1$, the assumptions on $f$ give $tf(t)\ge S|t|$. Therefore,
\[
\pair{x}{a+B_fx}=t^2-1-W+tf(t)
\ge t^2-1+S(|t|-1)>0.
\]
The point $(0,0)$ is outside the sum graph because $\mathcal A(0)=\varnothing$, and $\dom B_f=X$ gives Eq. (\ref{eq:original-cq}). The displayed functions satisfy the scalar conditions. Finally, $B_f$ is the derivative of $x\mapsto\int_0^{\pair{x}{g}}f(t)\,dt$, which is convex.
\end{proof}

The scalar partners also admit arbitrarily small bounded perturbations of the same first operator, provided the missing polar point is allowed to vary.

\begin{proposition}[Small bounded scalar perturbations]
\label{cor:small-bounded-partners}
Let $\mathcal A,g$ be as in Proposition~\ref{cor:scalar-partners}. If $f:\R\to\R$ is continuous and nondecreasing with $f(1)>f(-1)$, then $B_fx=f(\pair{x}{g})g$ is maximally monotone with full domain and $\mathcal A+B_f$ is not maximally monotone. In particular, this holds for $f(t)=\kappa\tanh t$ for every $\kappa>0$, while $\sup_x\|B_fx\|=\kappa\|g\|\to0$ as $\kappa\downarrow0$.
\end{proposition}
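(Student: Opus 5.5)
The plan is to keep the first operator $\mathcal A$ fixed and to move the missing polar point away from the origin, as in the proof of Proposition~\ref{cor:positive-perturbations}(1). I will use a point $(z,q)$ with $z=Lp$ and $q=p+cg$. Here $p$ has parameter $r(p)=0$, so that $\pair{z}{g}=0$ and hence $z\notin\dom\mathcal A$, since $|\pair{x}{g}|>1$ on $\dom\mathcal A$. The constant $c$ will be chosen strictly between $f(-1)$ and $f(1)$.

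First, maximality of $B_f$ and the interior-domain condition are routine. The map $x\mapsto f(\pair{x}{g})g$ is norm-continuous. It is monotone, because $\pair{x-y}{B_fx-B_fy}=(\pair{x}{g}-\pair{y}{g})(f(\pair{x}{g})-f(\pair{y}{g}))\ge0$ by monotonicity of $f$. The direct polar argument in Proposition~\ref{cor:positive-perturbations}(2) then gives maximality. Since $\dom B_f=X$, Eq.~(\ref{eq:original-cq}) holds.

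Next comes the choice of witness. Fix $c=\frac{f(1)+f(-1)}2$ and $\delta=\frac{f(1)-f(-1)}2>0$. Then monotonicity of $f$ gives $f(t)-c\ge\delta$ for $t>1$ and $f(t)-c\le-\delta$ for $t<-1$, so $t(f(t)-c)\ge\delta|t|$ for all $t\in J$. Choose $s>0$ with $s^2<\delta$. Using Assumption~\ref{ass:construction}(\ref{ass:realization}), set $p=\frac12(a^{1+s}+a^{-1-s})$, so that $r(p)=0$ and $Mp=(0,\omega(s))$.

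For $a\in D$ write $t=r(a)$ and $u=|t|-1>0$. Eq.~(\ref{eq:abs-pairing}) gives $\pair{Lp-La}{p-a}=t^2-1-\|\omega(u)-\omega(s)\|^2$. Together with $\pair{Lp-La}{g}=-t$, the polar quantity becomes
\[
\pair{z-La}{q-a-f(t)g}=t^2-1-\|\omega(u)-\omega(s)\|_{H_0}^2+t\bigl(f(t)-c\bigr).
\]
Using $t^2-1=u^2+2u$, the Lipschitz bound $\|\omega(u)-\omega(s)\|\le|u-s|$ (recall $\ell\le1$), and the bound $t(f(t)-c)\ge\delta(1+u)$, this is at least
\[
2u+2us-s^2+\delta(1+u)>\delta-s^2>0.
\]
Hence $(z,q)\in(\gra(\mathcal A+B_f))^\mu$. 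Since $z\notin\dom\mathcal A=\dom(\mathcal A+B_f)$, the point is not in the graph, so the sum is not maximally monotone.

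The main care point is the sign bookkeeping in the cross term $\pair{Lp-La}{(c-f(t))g}=(0-t)(c-f(t))$. This is exactly where $f(1)>f(-1)$ is needed: a single constant $c$ must separate the values of $f$ on the two branches of $J$. It is also the reason the witness cannot be the origin, since the origin would require $c=0$ and the stronger endpoint bounds of Proposition~\ref{cor:scalar-partners}.

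For $f=\kappa\tanh$ with $\kappa>0$ we have $f(1)>f(-1)$, so the result applies. Moreover $\|B_fx\|\le\kappa\|g\|$, with the supremum approached as $\pair{x}{g}\to\infty$, so $\sup_x\|B_fx\|=\kappa\|g\|\to0$ as $\kappa\downarrow0$.
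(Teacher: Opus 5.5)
Your proposal is correct and follows essentially the same route as the paper. The paper uses the same midpoint label $p_s=\frac12(a^{1+s}+a^{-1-s})$, the same shifted witness $(Lp_s,\,p_s+cg)$ with $c=\frac{f(1)+f(-1)}2$, the same reduction via $t(f(t)-c)\ge\delta|t|$, and the same lower bound $\delta-s^2+(2+2s+\delta)u$; your choice $s^2<\delta$ in place of the paper's $s\le\frac{\sqrt\delta}{2}$ is immaterial.
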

\begin{proof}
Set $c=\frac{f(1)+f(-1)}2$, $\delta=\frac{f(1)-f(-1)}2>0$ and choose $0<s\le\frac{\sqrt\delta}{2}$. The actual labels from Assumption~\ref{ass:construction}\textup{(\ref{ass:realization})} give $p_s=\frac{a^{1+s}+a^{-1-s}}2$, with $r(p_s)=0$ and $Mp_s=(0,\omega(s))$. Thus $z_s=Lp_s\notin\dom\mathcal A$. For $a\in D$, put $t=r(a)$ and $u=|t|-1>0$. Monotonicity of $f$ gives $t(f(t)-c)\ge\delta|t|$, so
\[
\begin{aligned}
\pair{z_s-La}{p_s+cg-a-B_f(La)}
&=t^2-1-\|\omega(u)-\omega(s)\|_{H_0}^2+t(f(t)-c)\\
&\ge\delta-s^2+(2+2s+\delta)u>\frac{3\delta}{4}.
\end{aligned}
\]
Therefore $(z_s,p_s+cg)$ is a missing polar point. The full-domain maximality of $B_f$ follows from the continuous monotone argument in Proposition~\ref{cor:positive-perturbations}(2). The bound for $\tanh$ follows directly from its range.
\end{proof}

\subsection{Strictly monotone partners}
\label{subsec:appendix-strict}

Strict monotonicity of the second operator does not remove the failure. The next proposition replaces the rank-one partner by injective compact linear operators of infinite rank, and also gives smooth strictly monotone second operators with bounded range.

\begin{proposition}[Strictly monotone second operators]
\label{cor:strict-partners}
Let $A$ and $P$ be the operators on $c_0(I)$ in Example~\ref{thm:C}. Choose $w_i>0$ for $i\in I$ with $\sum_{i\in I}w_i<\infty$, and let $\varepsilon>0$.
\begin{enumerate}
\item For every $\lambda>0$, the operator
\[
B_{\lambda,\varepsilon}x=\lambda Px+\varepsilon(w_ix_i)_{i\in I}
\]
is bounded, linear, compact, injective and strictly monotone, with infinite rank. It is the derivative of the strictly convex quadratic
\[
\Phi_{\lambda,\varepsilon}(x)
=\frac{\lambda}{2}\pair{x}{g}^2
+\frac{\varepsilon}{2}\sum_{i\in I}w_ix_i^2.
\]
The operators $A$ and $B_{\lambda,\varepsilon}$ satisfy Eq. (\ref{eq:original-cq}) and have a nonmaximal sum. For fixed $\lambda$,
\[
\|B_{\lambda,\varepsilon}-\lambda P\|\le\varepsilon\sum_{i\in I}w_i\longrightarrow0
\quad(\varepsilon\downarrow0).
\]
\item For $\kappa\ge\frac{\sigma}{\tanh1}$, the operator
\[
\widetilde B_{\kappa,\varepsilon}x
=\kappa\tanh(\pair{x}{g})g
+\varepsilon(w_i\tanh x_i)_{i\in I}
\]
is smooth, globally Lipschitz, injective and strictly monotone, and its range has compact closure in $\ell^1(I)$. It is the derivative of the strictly convex function
\[
\Psi_{\kappa,\varepsilon}(x)
=\kappa\log\cosh(\pair{x}{g})
+\varepsilon\sum_{i\in I}w_i\log\cosh x_i.
\]
It is maximally monotone with full domain, and $A+\widetilde B_{\kappa,\varepsilon}$ is not maximally monotone.
\end{enumerate}
\end{proposition}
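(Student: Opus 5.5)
The plan is to treat the two parts separately. In each part I will first check the elementary properties of the second operator. Then I will show that a suitable point lies in the monotone polar of the sum graph but not in the graph. Throughout I use the identity $\pair{La}{g}=r(a)$ from Eq. (\ref{eq:C2}): a primal point $x=La\in\dom A$ satisfies $\pair{x}{g}=t$ with $|t|>1$, and $\pair{x}{a}=t^2-1-W(|t|-1)>-\sigma$.

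\emph{Part (1).} Write $D_w x=(w_ix_i)_{i\in I}$. Its norm into $\ell^1(I)$ is at most $\sum_i w_i$, because $\|D_wx\|_1\le\|x\|_\infty\sum_iw_i$. It is compact: truncating to finitely many indices gives finite-rank maps within $\sum_{i\notin F}w_i$ in operator norm. It has infinite rank and is injective since every $w_i>0$. The quadratic form $\pair{x}{B_{\lambda,\varepsilon}x}=\lambda\pair{x}{g}^2+\varepsilon\sum_iw_ix_i^2$ is positive for $x\ne0$. This gives strict monotonicity, injectivity of $B_{\lambda,\varepsilon}$, and strict convexity of $\Phi_{\lambda,\varepsilon}$. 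Differentiating $\Phi_{\lambda,\varepsilon}$ termwise is legitimate because the series converges uniformly on bounded sets together with its derivative, and it yields $B_{\lambda,\varepsilon}$. The norm estimate is immediate. Maximality follows from Proposition~\ref{cor:positive-perturbations}(2), since $B_{\lambda,\varepsilon}$ is continuous and monotone with full domain, and that proposition also applies with $R=\varepsilon D_w$. So nonmaximality of $A+B_{\lambda,\varepsilon}$ is a direct instance of Proposition~\ref{cor:positive-perturbations}(2), with $\mathcal A=A$, $\mathcal P=P$ from Example~\ref{thm:C}, and $R$ norm-continuous, linear and monotone. Eq. (\ref{eq:original-cq}) holds because $\dom B_{\lambda,\varepsilon}=Y$ and $\dom A\ne\varnothing$.

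\emph{Part (2).} I will take $\widetilde B_{\kappa,\varepsilon}=B_f+R$ with $f=\kappa\tanh$ and $Rx=\varepsilon(w_i\tanh x_i)_i$. First check that $R$ maps $c_0(I)$ into $\ell^1(I)$ with $\|Rx\|_1\le\varepsilon\sum_iw_i$. Its range lies in the compact set $\{y:|y_i|\le\varepsilon w_i\}$, which is a weighted cube with summable weights. The $B_f$ part has range in a bounded segment of $\R g$, so the whole range has compact closure. $R$ is Lipschitz with constant at most $\varepsilon\sum w_i$ because $\tanh$ is $1$-Lipschitz. Smoothness comes from termwise differentiation of $\Psi_{\kappa,\varepsilon}$, using uniform bounds on the derivatives of $\log\cosh$; this is routine but the most technical step. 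Strict monotonicity follows from $\pair{x-y}{Rx-Ry}=\varepsilon\sum_iw_i(x_i-y_i)(\tanh x_i-\tanh y_i)$, which is positive when $x\ne y$ since $\tanh$ is strictly increasing. The $B_f$ term is monotone because $f$ is nondecreasing. Injectivity follows from strict monotonicity, and strict convexity of $\Psi_{\kappa,\varepsilon}$ follows likewise.

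The key step is nonmaximality. Here I use the origin as the witness, exactly as in Proposition~\ref{cor:scalar-partners}. With $S=\sigma$, the condition $\kappa\tanh1\ge\sigma$ gives $f(1)\ge S$ and $f(-1)\le-S$. So for $(x,a)\in\gra A$ we get $\pair{x}{a+B_fx}>0$. Adding $\pair{x}{Rx}=\varepsilon\sum_iw_ix_i\tanh x_i\ge0$ keeps the pairing positive. Hence $(0,0)\in(\gra(A+\widetilde B_{\kappa,\varepsilon}))^\mu$, and $(0,0)\notin\gra(A+\widetilde B_{\kappa,\varepsilon})$ because $A(0)=\varnothing$. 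Maximality of $\widetilde B_{\kappa,\varepsilon}$ again follows from the continuous monotone full-domain argument in Proposition~\ref{cor:positive-perturbations}(2). I expect no real obstacle beyond the careful verification of smoothness and compactness in the infinite sums.
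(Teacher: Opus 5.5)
Your proposal is correct and follows essentially the same route as the paper. In part (1) you apply Proposition~\ref{cor:positive-perturbations}(2) with $R=\varepsilon D_w$ after the truncation-compactness and quadratic-form checks. In part (2) you combine the origin witness from Proposition~\ref{cor:scalar-partners} (with $S=\sigma$) with $\pair{x}{Rx}\ge0$ and the continuous-monotone maximality argument.
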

\begin{proof}
\noindent\textit{(1)} The map $Dx=(w_ix_i)_{i\in I}$ takes $c_0(I)$ boundedly into $\ell^1(I)$. Its finite-coordinate truncations converge in operator norm because the omitted weights have sum tending to zero. Thus $D$ and $B_{\lambda,\varepsilon}$ are compact. For $x\ne y$,
\[
\pair{x-y}{B_{\lambda,\varepsilon}x-B_{\lambda,\varepsilon}y}
=\lambda\pair{x-y}{g}^2+\varepsilon\sum_{i\in I}w_i(x_i-y_i)^2>0.
\]
Hence the map is strictly monotone and injective, and injectivity implies infinite rank. The displayed quadratic has this map as its Fr\'{e}chet derivative and is strictly convex by the same positive quadratic form. Proposition~\ref{cor:positive-perturbations}, with $R=\varepsilon D$, supplies maximality, Eq. (\ref{eq:original-cq}) and nonmaximality of the sum. The norm estimate follows from $\|Dx\|_1\le(\sum_iw_i)\|x\|_\infty$.

\noindent\textit{(2)} The map $Tx=(w_i\tanh x_i)_{i\in I}$ is monotone and globally Lipschitz. The strict increase of $\tanh$ and the positivity of every $w_i$ give
\[
\pair{x-y}{Tx-Ty}
=\sum_{i\in I}w_i(x_i-y_i)(\tanh x_i-\tanh y_i)>0
\quad(x\ne y).
\]
Consequently $\widetilde B_{\kappa,\varepsilon}$ is strictly monotone and injective. Its range satisfies
\[
\|\widetilde B_{\kappa,\varepsilon}x\|_1
\le\kappa\|g\|_1+\varepsilon\sum_iw_i.
\]
Moreover, the tails of the coordinate sums are uniformly bounded by the tails of $\kappa|g_i|+\varepsilon w_i$. Finite-coordinate truncation therefore proves relative compactness of the whole range. The bounds on the derivatives of $\tanh$, together with $\sum_iw_i<\infty$, justify differentiation of the series to every order. Thus the displayed potential is smooth, has derivative $\widetilde B_{\kappa,\varepsilon}$ and is strictly convex. The direct polar argument for continuous monotone maps gives maximality. Proposition~\ref{cor:scalar-partners}, with $S=\sigma$, gives $\pair{x}{a+\kappa\tanh(\pair{x}{g})g}>0$ on $\gra A$. Adding $\varepsilon\sum_iw_ix_i\tanh x_i\ge0$ preserves this inequality, so $(0,0)$ remains a point in the monotone polar outside the sum graph. Full domain gives Eq. (\ref{eq:original-cq}).
\end{proof}

We transfer the counterexample on $c_0(I)$ through a surjection from a space with a countable separating family of continuous linear functionals, then add a strictly monotone map on that space. This gives strictly monotone second operators with nonmaximal sums, including on standard $\ell^1$. The added map supplies strictness along the quotient kernel.

\begin{proposition}[Strict partners on the source space]
\label{cor:source-strict-partners}
Suppose $U$ has a countable family $(f_n)\subset U^*$ separating points, with $\|f_n\|\le1$, and $Q:U\to c_0(I)$ is a bounded linear surjection. Let $A,P$ be the operators in Example~\ref{thm:C}, and put $A_Q=Q^*AQ$, $P_Q=Q^*PQ$. Choose $w_n>0$ with $\sum_nw_n<\infty$ and define
\[
D_Uu=\sum_nw_nf_n(u)f_n,
\qquad
T_Uu=\sum_nw_n\tanh(f_n(u))f_n.
\]
Then the following are full-domain maximally monotone partners with which $A_Q$ has a nonmaximal sum:
\begin{enumerate}
\item $B_{\lambda,\varepsilon}=\lambda P_Q+\varepsilon D_U$, for every $\lambda,\varepsilon>0$, is bounded, linear, compact, injective and strictly monotone, with infinite rank.
\item $\widetilde B_{\kappa,\varepsilon}=Q^*B_\kappa Q+\varepsilon T_U$, where $B_\kappa x=\kappa\tanh(\pair{x}{g})g$ and $\kappa,\varepsilon>0$, is smooth, globally Lipschitz, injective and strictly monotone, and its range has compact closure.
\end{enumerate}
These conclusions apply to standard $\ell^1$, using its coordinate functionals and the specified quotient in Example~\ref{thm:L}.
\end{proposition}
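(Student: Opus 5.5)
We treat the two partners separately, but both rest on the same two ingredients. First, Theorem~\ref{lem:surjection-transport} transfers the structure from $c_0(I)$ to $U$. Second, the witnesses built for $c_0(I)$ in Proposition~\ref{cor:positive-perturbations} and Proposition~\ref{cor:scalar-partners} are pulled back by the adjoint identity. The strictness added on $U$ then only has to be shown not to destroy those witnesses.

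\emph{Properties of the added maps.} We begin with $D_U$ and $T_U$. Since $\|f_n\|\le1$ and $\sum_nw_n<\infty$, both series converge absolutely in operator norm, and
\[
\|D_Uu\|\le\Bigl(\sum_nw_n\Bigr)\|u\|,\qquad \|T_Uu\|\le\sum_nw_n .
\]
The map $D_U$ is a norm limit of finite-rank maps, hence compact. The map $T_U$ is globally Lipschitz with constant at most $\sum_nw_n$. Its range lies in the closure of the set of absolutely summable combinations $\sum_nc_nf_n$ with $|c_n|\le w_n$. This set is compact by the tail estimate used in Proposition~\ref{cor:strict-partners}(2): truncation of the tails gives uniform approximation by a bounded subset of a finite-dimensional space.

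For $u\ne v$, pick $n$ with $f_n(u)\ne f_n(v)$, using that the family separates points. Then
\[
\pair{u-v}{D_Uu-D_Uv}=\sum_nw_n f_n(u-v)^2>0,
\]
and the corresponding $\tanh$ sum is also strictly positive. Both maps are therefore strictly monotone and injective, and injectivity forces infinite rank for the linear one. The terms $Q^*PQ$ and $Q^*B_\kappa Q$ are monotone, so the sums $B_{\lambda,\varepsilon}$ and $\widetilde B_{\kappa,\varepsilon}$ remain strictly monotone and injective. Smoothness of $\widetilde B_{\kappa,\varepsilon}$ comes from the potential
\[
\kappa\log\cosh\pair{Qu}{g}+\varepsilon\sum_nw_n\log\cosh f_n(u),
\]
with the termwise derivative bounds exactly as in Proposition~\ref{cor:strict-partners}. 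Each partner is continuous and monotone with full domain, so the direct argument of Proposition~\ref{cor:positive-perturbations}(2) gives maximal monotonicity. Full domain also gives Eq.~(\ref{eq:original-cq}).

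\emph{Nonmaximality of the sums.} The operator $A_Q$ is maximally monotone by Theorem~\ref{lem:surjection-transport}. For part (1), we apply Proposition~\ref{cor:positive-perturbations}(2) on $U$ itself, with first operator $A_Q$ and $R=\varepsilon D_U$. That proposition only needs a point $(z,p)$ monotonically related to $\gra(A_Q+\lambda P_Q)$ with $z\notin\dom A_Q$. Proposition~\ref{cor:positive-perturbations}(1) supplies such a point $(z_0,p_0)$ on $c_0(I)$. We lift it to $(u,Q^*p_0)$ with $Qu=z_0$, as in the final paragraph of Proposition~\ref{cor:space-transfer}. The argument of Proposition~\ref{cor:positive-perturbations}(2) then gives the missing polar point $(u,Q^*p_0+\varepsilon D_Uu)$.

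For part (2), we use the origin as the witness; this is the step needing most care. Set $\kappa'=\kappa$ and take $(u,Q^*a)\in\gra A_Q$, so that $Qu=La$. Then
\[
\pair{u}{Q^*a+\kappa\tanh(\pair{Qu}{g})Q^*g}=t^2-1-W+\kappa t\tanh t .
\]
This is positive only if $\kappa\ge\sigma/\tanh1$, and the statement allows every $\kappa>0$. So for small $\kappa$ we instead use the varying witness of Proposition~\ref{cor:small-bounded-partners}: since $\tanh$ is odd, $c=0$ there, and we take the point $(z_s,p_s)$. Pulling it back gives $(u_s,Q^*p_s)$ with $Qu_s=z_s\notin\dom A$. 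To handle the extra term, we add the value $\varepsilon T_Uu_s$ to the dual coordinate, that is, we use $(u_s,\,Q^*p_s+\varepsilon T_Uu_s)$. Monotonicity of $T_U$ gives
\[
\pair{u_s-x}{T_Uu_s-T_Ux}\ge0 ,
\]
so the polar inequality survives exactly as in Proposition~\ref{cor:positive-perturbations}(2). Nonmembership holds because $u_s\notin\dom A_Q$.

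\emph{The space $\ell^1$.} Finally, for standard $\ell^1$ we take $f_n=e_n^*$, the coordinate functionals, which have norm one and separate points. For $Q$ we take the surjection of Lemma~\ref{lem:specified-quotient}, whose preimage constant is $C_Q=2$.
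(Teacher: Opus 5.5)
Your proposal is correct and follows the paper's proof. You obtain compactness, strict monotonicity and smoothness of $D_U,T_U$ from tail estimates and separation, and maximality from continuity. You then lift the witness from Proposition~\ref{cor:positive-perturbations} or Proposition~\ref{cor:small-bounded-partners} (with $c=0$ because $\tanh$ is odd) through $Q$ and shift its dual coordinate by $\varepsilon D_U u_0$ or $\varepsilon T_U u_0$, which is exactly the paper's argument. The opening attempt in part (2) to use the origin as witness is unnecessary, and you are right to drop it, since it only works for $\kappa\ge\sigma/\tanh 1$.
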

\begin{proof}
The finite-rank truncations of $D_U$ converge in operator norm, since the tail norm is at most $\sum_{n>N}w_n$. Also
\[
\pair{h}{D_Uh}=\sum_nw_nf_n(h)^2>0\qquad(h\ne0).
\]
The series for $T_U$ converges uniformly, its finite sums have bounded finite-dimensional ranges, and its tails have norm at most $\sum_{n>N}w_n$. Hence its range has compact closure. Strict increase of $\tanh$ and separation by the $f_n$ give
\[
\pair{u-v}{T_Uu-T_Uv}>0\qquad(u\ne v).
\]
Bounded derivatives of $\tanh$ of every fixed order give uniformly convergent derivative series, so $T_U$ is smooth and globally Lipschitz. These maps are derivatives of the strictly convex functions $\frac12\sum_nw_nf_n(u)^2$ and $\sum_nw_n\log\cosh(f_n(u))$, respectively.

The stated properties now follow by adding the positive rank-one or bounded scalar pullback. Injectivity on the infinite-dimensional space $U$ also forces the linear partner to have infinite rank. Continuous monotonicity proves maximality. For either base partner, Proposition~\ref{cor:positive-perturbations} or Proposition~\ref{cor:small-bounded-partners} supplies a missing polar point $(z,p)$ with $z\notin\dom A$. Choose $Qu_0=z$. If $R=\varepsilon D_U$ or $\varepsilon T_U$, then
\[
\begin{aligned}
&\pair{u_0-u}{Q^*p+R(u_0)-Q^*a-Q^*B(Qu)-R(u)}\\
&\quad=\pair{z-Qu}{p-a-B(Qu)}+\pair{u_0-u}{R(u_0)-R(u)}\ge0.
\end{aligned}
\]
Since $u_0\notin\dom A_Q$, the shifted point is outside the sum graph. Both second operators have full domain.
\end{proof}

Every separable Banach space has such a countable separating family: take norm-one supporting functionals at a dense sequence of points of its unit sphere. The same perturbation proof applies to the closed-subspace examples in Proposition~\ref{cor:space-transfer}. It also applies to other spaces with a countable separating family, without requiring separability. On $\ell^1$, the two added maps have the simple formulas $(w_nu_n)_n$ and $(w_n\tanh u_n)_n$, taking values in $\ell^\infty$.

\subsection{Convex constraints and penalties}
\label{subsec:convex-partners}

The construction theorem also supplies second operators defined by convex constraints or convex penalties. First, we use its pairing identity for normal cones, then add finite continuous convex functions and use radial or seminorm potentials. These choices include both multivalued partners and a bounded single-valued partner on standard $\ell^1$.

% Bo\c{t} \cite{bot2026fpv} gives a halfspace normal-cone example for his geometric-series realization.
We use the pairing identity of the construction theorem to obtain a criterion for convex constraints, which we apply to closed balls. For a nonempty closed convex set $C\subset X$, define
\[
N_C(x)=
\begin{cases}
\{n\in X^*:\pair{y-x}{n}\le0\ \text{for all }y\in C\},&x\in C,\\
\varnothing,&x\notin C.
\end{cases}
\]
We give a condition on $C$ under which $(\mathcal A,N_C)$ is a counterexample pair. For the concrete operator on $c_0(I)$, an affine change yields such pairs with the normal cone of every closed ball of positive radius centered at zero.

\begin{proposition}[Normal cones and closed balls]
\label{cor:normal-cone}
Each of the following pairs consists of maximally monotone operators satisfying Eq. (\ref{eq:original-cq}) whose sum is not maximally monotone.
\begin{enumerate}
\item\label{cor:nc-general} The pair $(\mathcal A,N_C)$, where $\mathcal A$ is as in Theorem~\ref{thm:endpoint-criterion}, Eq. (\ref{eq:abs-endpoint-exclusion}) holds, and $C\subset X$ is closed and convex with
\begin{equation}
0\in C\subset\{x\in X:\pair{x}{g}\ge-1\},
\qquad
\dom\mathcal A\cap\operatorname{int}C\ne\varnothing.
\label{eq:cor-nc-condition}
\end{equation}
\item\label{cor:nc-ball} The pair $(A_r,B_r)$ for every $r>0$, where $A$ is the operator on $Y=c_0(I)$ from Example~\ref{thm:C}, $\overline B_Y=\{x\in Y:\|x\|_\infty\le1\}$, and
\begin{equation}
\begin{aligned}
c=\frac{15}{4}(e_{0,1}-e_{0,2}),~A_r(u)=\frac4r A\left(\frac4r u+c\right),~B_r=N_{r\overline B_Y}.
\end{aligned}
\label{eq:cor-ball-operators}
\end{equation}
\end{enumerate}
\end{proposition}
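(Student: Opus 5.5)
The plan is to avoid the origin as a witness. Because $\pair{x}{a}=r(a)^2-1-\|\omega(|r(a)|-1)\|^2$ can be as low as about $-S$ near $|r(a)|=1$, the point $(0,0)$ need not be monotonically related to $\gra(\mathcal A+N_C)$. Instead I will use the point $(0,-Sg)$ from Corollary~\ref{cor:local-polar-recovered} and combine it with the sign of the normal cone.

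\emph{Part (\ref{cor:nc-general}).} Since $N_C=\partial\iota_C$ for the proper lower semicontinuous convex indicator $\iota_C$, $N_C$ is maximally monotone by Rockafellar's subdifferential theorem. The interior-domain condition Eq. (\ref{eq:original-cq}) is exactly the second half of Eq. (\ref{eq:cor-nc-condition}), and $\mathcal A$ is maximally monotone by Theorem~\ref{thm:endpoint-criterion}.

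Now let $(x,a+n)\in\gra(\mathcal A+N_C)$, so $x=La\in C\cap\dom\mathcal A$ and $n\in N_C(x)$. Put $t=r(a)=\pair{x}{g}$. Since $|t|>1$ and $C\subset\{\pair{\cdot}{g}\ge-1\}$, we get $t>1$, so $x\in U=\{\pair{x}{g}>-1\}$. Corollary~\ref{cor:local-polar-recovered} then gives $\pair{-x}{-Sg-a}>0$. Because $0\in C$, the definition of $N_C$ gives $\pair{0-x}{n}\le0$, i.e.\ $-\pair{-x}{n}\ge0$. Adding these,
\[
\pair{0-x}{-Sg-(a+n)}>0 .
\]
So $(0,-Sg)$ lies in the monotone polar of the sum. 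It is not in the sum graph because $0\notin\dom\mathcal A$.

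\emph{Part (\ref{cor:nc-ball}).} I will reduce to the same computation through the affine change $x=\frac4r u+c$.

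First, maximality. $A_r=S^*A(S\cdot+c)$ with $S=\frac4r I$, so $A_r$ is maximally monotone by the isomorphism case of Theorem~\ref{lem:surjection-transport} together with invariance of maximality under primal translation. $B_r$ is maximally monotone as a normal cone, as in part (\ref{cor:nc-general}).

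Second, the location of $r\overline B_Y$ relative to the domain. For $u\in r\overline B_Y$ we have $|\pair{u}{g}|\le 2r$, because $\|g\|_1=2$. Since $\pair{c}{g}=\frac{15}2$, this gives $\pair{x}{g}\in[-\frac12,\frac{31}2]$. So every $u\in r\overline B_Y\cap\dom A_r$ corresponds to a graph point of $A$ with $t=r(a)>1$.

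Third, the domain condition, which I expect to be the main (though mild) obstacle: I need a domain point of $A_r$ with $\|u\|<r$, i.e.\ a point of $\dom A$ with $\|x-c\|_\infty<4$. Explicit labels such as $La^2$ lie far from $c$. Instead I will use the fibre-closure statement in the proof of Corollary~\ref{cor:family-domain-geometry}(1): the closure of the fibre at $t=\frac{15}2$ contains $\frac{15}{4}(e_{0,1}-e_{0,2})=c$. Hence some domain point lies within distance $<4$ of $c$, which gives $\dom A_r\cap\operatorname{int}(r\overline B_Y)\ne\varnothing$.

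Fourth, the witness. Take $u_0=-\frac r4c$, which corresponds to $x=0$. Then $\|u_0\|_\infty=\frac{15r}{16}\le r$, so $u_0\in r\overline B_Y$. Take $p_0=-\frac4r Sg$. A graph point of $A_r+B_r$ has the form $(u,\frac4r a+n)$ with $x=\frac4ru+c=La$ and $n\in N_{r\overline B_Y}(u)$. The scaling factors cancel, giving
\[
\pair{u_0-u}{p_0-\tfrac4r a-n}=\pair{-x}{-Sg-a}-\pair{u_0-u}{n}.
\]
The first term is positive by Corollary~\ref{cor:local-polar-recovered}, since $t>1$. The second term is nonnegative because $u_0\in r\overline B_Y$ and $n$ is a normal vector at $u$. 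Finally, $u_0\notin\dom A_r$ because $0\notin\dom A$. Hence $(u_0,p_0)$ lies in the monotone polar of $\gra(A_r+B_r)$ but not in that graph, so $A_r+B_r$ is not maximally monotone.
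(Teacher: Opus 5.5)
Your proof is correct and follows the paper's argument. In part (1) you use the same witness $(0,-Sg)$ together with the sign of the normal cone at $0\in C$, and in part (2) you use the same affine change $x=\frac4r u+c$ and the same transported witness $\bigl(-\frac{15r}{16}(e_{0,1}-e_{0,2}),-\frac{4\sigma}{r}g\bigr)$. The differences are minor: you get the interior-domain condition by approximating $c$ through the fibre closure of Corollary~\ref{cor:family-domain-geometry} at $t=\frac{15}{2}$, and you recompute the witness directly, whereas the paper exhibits the explicit point $x^\star=e_{0,1}-e_{0,2}-3e_{0,3}\in\dom A$ with $\|x^\star-c\|_\infty=3$ and reduces part (2) to part (1) with $C=c+4\overline B_Y$.
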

\begin{proof}
\noindent\textit{\textup{(\ref{cor:nc-general})}}
The operator $N_C$ is maximally monotone \cite{rockafellar1970subdiff}, with $\dom N_C=C$, so the interior-domain condition follows from Eq. (\ref{eq:cor-nc-condition}). For $x=La\in C$, $a\in D$, and $n\in N_C(x)$, set $t=r(a)=\pair{x}{g}$. Since $|t|>1$, $t\ge-1$ and $0\in C$, we have $t>1$ and $\pair{x}{n}\ge0$. Eq. (\ref{eq:abs-pairing}) yields
\[
\pair{-x}{-Sg-a-n}
\ge t^2-1-S+St
=(t-1)(t+1+S)>0.
\]
Thus $(0,-Sg)$ is monotonically related to $\gra(\mathcal A+N_C)$ and lies outside it because $\mathcal A(0)=\varnothing$.

\noindent\textit{\textup{(\ref{cor:nc-ball})}}
Set $a^\star=5e_{0,1}-7e_{0,2}+3e_{0,3}$. Since $\omega_1=0$, Eqs. (\ref{eq:D4})--(\ref{eq:D7}) give
\[
\begin{aligned}
r(a^\star)&=2,\\
ma^\star&=(1,0,0,\ldots)=F(2),
\end{aligned}
\]
and hence $a^\star\in D$, with $x^\star:=La^\star=e_{0,1}-e_{0,2}-3e_{0,3}\in\dom A$. For $C=c+4\overline B_Y$, we have
\[
\begin{aligned}
\|c\|_\infty&=\frac{15}{4}<4,\\
\|x^\star-c\|_\infty&=3<4,\\
\inf_{x\in C}\pair{x}{g}&=\pair{c}{g}-4\|g\|_1=-\frac12.
\end{aligned}
\]
These inequalities verify Eq. (\ref{eq:cor-nc-condition}), so \textup{(\ref{cor:nc-general})} applies to $(A,N_C)$ with $S=\sigma$.

For $\alpha=\frac4r$, the normal-cone definition gives $B_r(u)=\alpha N_C(\alpha u+c)$. The bijection $(x,a)\mapsto((x-c)/\alpha,\alpha a)$ preserves pairings of differences and transforms $(A,N_C)$ into $(A_r,B_r)$. It therefore preserves maximality and transforms the point $(0,-\sigma g)$ into
\begin{equation}
\begin{aligned}
(z_r,p_r)&=\left(-\frac{15r}{16}(e_{0,1}-e_{0,2}),-\frac{4\sigma}{r}g\right)\\
&\in(\gra(A_r+B_r))^\mu\setminus\gra(A_r+B_r).
\end{aligned}
\label{eq:cor-ball-witness}
\end{equation}
Finally, $(x^\star-c)/\alpha\in\dom A_r$ has norm $\frac{3r}{4}<r$, proving the interior-domain condition.
\end{proof}

Adding the subdifferential of a finite continuous convex function to either a positive rank-one partner or one of the preceding normal-cone partners preserves nonmaximality with the same first operator.

\begin{proposition}[Finite continuous convex penalties]
\label{cor:nonsmooth-penalties}
\label{cor:constrained-penalties}
Let $f:X\to\R$ be finite, continuous and convex, and let $\iota_C$ denote the function equal to zero on $C$ and $+\infty$ outside $C$.
\begin{enumerate}
\item For the operators in Proposition~\ref{cor:positive-perturbations} and every $\lambda>0$,
\[
B_f=\partial\left(\frac\lambda2\pair{\cdot}{g}^2+f\right)=\lambda\mathcal P+\partial f
\]
is maximally monotone with full domain, and $\mathcal A+B_f$ is not maximally monotone.
\item For the normal-cone examples in Proposition~\ref{cor:normal-cone}, the operator $\partial(\iota_C+f)=N_C+\partial f$ is maximally monotone with domain $C$ and gives a nonmaximal sum with the same first operator.
\end{enumerate}
\end{proposition}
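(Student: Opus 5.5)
The plan is to handle both parts with the same three steps: show the second operator is maximally monotone, reuse a missing polar point $(z,p)$ from the earlier results, and show that the extra subgradient term keeps $(z,p)$ in the polar after a suitable shift. Throughout, $\partial f$ has full domain. Indeed, $f$ is finite, continuous and convex, so by Rockafellar's subdifferential theorem \cite{rockafellar1970subdiff} $\partial f$ is maximally monotone with $\dom\partial f=X$.

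\textbf{Part (1).} The function $\frac\lambda2\pair{\cdot}{g}^2$ is finite, continuous and convex, and its gradient is $\lambda\mathcal P$. The sum rule for subdifferentials, valid when one summand is continuous, gives $\partial(\frac\lambda2\pair{\cdot}{g}^2+f)=\lambda\mathcal P+\partial f$. Rockafellar's theorem then makes $B_f$ maximally monotone with $\dom B_f=X$, so Eq. (\ref{eq:original-cq}) holds because $\dom\mathcal A\ne\varnothing$.

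For nonmaximality, take the point $(z,p)$ from Proposition~\ref{cor:positive-perturbations}(1). It lies in the polar of $\gra(\mathcal A+\lambda\mathcal P)$, and $z\notin\dom\mathcal A$. Since $z\in\dom\partial f$, fix $\xi\in\partial f(z)$ and consider $(z,p+\xi)$. For $(x,a)\in\gra\mathcal A$ and $\zeta\in\partial f(x)$,
\[
\pair{z-x}{p+\xi-a-\lambda\mathcal Px-\zeta}=\pair{z-x}{p-a-\lambda\mathcal Px}+\pair{z-x}{\xi-\zeta}\ge0,
\]
where the last term is nonnegative by monotonicity of $\partial f$. This is the argument of Proposition~\ref{cor:positive-perturbations}(2), with the single value $Rz$ replaced by a selection $\xi$. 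The point $(z,p+\xi)$ lies outside the sum graph because $z\notin\dom\mathcal A$.

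\textbf{Part (2).} First, $\iota_C+f$ is proper, convex and lower semicontinuous, so $\partial(\iota_C+f)$ is maximally monotone. Because $f$ is continuous at a point of $C=\dom\iota_C$, the Moreau--Rockafellar sum rule gives $\partial(\iota_C+f)=N_C+\partial f$ with domain $C$. The interior-domain condition is the one already checked in Proposition~\ref{cor:normal-cone}.

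For the general pair $(\mathcal A,N_C)$ of Proposition~\ref{cor:normal-cone}(\ref{cor:nc-general}), the witness is $(0,-Sg)$ with $0\in C$. Pick $\xi\in\partial f(0)$ and use $(0,-Sg+\xi)$. The same splitting gives
\[
\pair{0-x}{-Sg+\xi-a-n-\zeta}=\pair{-x}{-Sg-a-n}+\pair{0-x}{\xi-\zeta},
\]
where the first term is positive by the earlier computation and the second is nonnegative by monotonicity of $\partial f$. For the ball pairs $(A_r,B_r)$, I would use the witness $(z_r,p_r)$ of Eq. (\ref{eq:cor-ball-witness}) and take $\xi\in\partial f(z_r)$. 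This needs $z_r\in r\overline B_Y$, which holds because $\|z_r\|_\infty=\frac{15r}{16}<r$. Alternatively, one can transport the general case through the affine bijection used in that proof, since this bijection maps $\partial f$ to the subdifferential of a transformed convex function.

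The main obstacle is making sure the witness point lies in $\dom\partial f$, so that a subgradient $\xi$ exists to shift by. This is automatic in part (1), and in part (2) it reduces to checking that the witness lies in $C$. The remaining care is to invoke the sum rule under the correct qualification condition, namely continuity of $f$.
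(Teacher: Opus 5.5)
Your proposal is correct and follows the paper's proof essentially line by line. Maximality and the domains come from the sum rule with the continuous summand $f$ together with Rockafellar's subdifferential theorem. Nonmaximality comes from shifting an existing missing polar point $(z,p)$ by some $q\in\partial f(z)$, so that the pairing splits into the old polar inequality plus monotonicity of $\partial f$, while $z\notin\dom\mathcal A$ keeps the shifted point off the graph. Your explicit handling of the general and closed-ball normal-cone cases is a harmless elaboration; since $\dom\partial f=X$, the check $z\in C$ (like the paper's mention of $z\in C\setminus\dom\mathcal A$) is not actually needed to pick the shift.
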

\begin{proof}
A finite continuous convex function on a Banach space has a continuous linear subgradient at every point. The convex subdifferential sum rule, with the continuous summand $f$, gives the two equalities. Each resulting operator is the subdifferential of a proper lower semicontinuous convex function and hence is maximally monotone \cite{rockafellar1970subdiff}. Its domain is $X$ in part (1), and $C$ in part (2), since $\partial f(x)$ is nonempty and $0\in N_C(x)$ for every $x\in C$.

In part (1), choose an existing missing polar point $(z,p)$ of $\mathcal A+\lambda\mathcal P$ with $z\notin\dom\mathcal A$, and choose $q\in\partial f(z)$. For $a\in\mathcal A x$ and $b\in\partial f(x)$,
\[
\pair{z-x}{p+q-a-\lambda\mathcal Px-b}
=\pair{z-x}{p-a-\lambda\mathcal Px}+\pair{z-x}{q-b}\ge0.
\]
The shifted point remains outside the graph because its primal coordinate is unchanged. The same calculation proves part (2), using its existing witness $(z,p)$ with $z\in C\setminus\dom\mathcal A$. The domain condition is unchanged in both cases.
\end{proof}

Next, we construct full-domain second operators as subdifferentials of convex functions of the norm. A lower bound on the radial derivative makes the origin a missing polar point of the sum, and norm powers greater than one give coercive partners.

\begin{proposition}[Radial convex functions]
\label{cor:radial-partners}
Let $\mathcal A$ be as in Theorem~\ref{thm:endpoint-criterion}, and suppose Eq. (\ref{eq:abs-endpoint-exclusion}) holds. Put $d=\frac1{\|g\|}$. Let $\psi:[0,\infty)\to\R$ be continuously differentiable, convex and nondecreasing, and suppose
\[
\psi'(d)\ge\frac{S}{d}.
\]
Then $B_\psi=\partial(\psi(\|\cdot\|))$ is maximally monotone with full domain, and $\mathcal A+B_\psi$ is not maximally monotone under Eq. (\ref{eq:original-cq}). In particular, this holds for
\begin{equation}
B_{p,\kappa}=\partial\left(\frac{\kappa}{p}\|\cdot\|^p\right),
\qquad p\ge1,\qquad \kappa\ge S\|g\|^p.
\label{eq:cor-power-partners}
\end{equation}
For $p>1$, every $b\in B_{p,\kappa}(x)$ satisfies
\[
\frac{\pair{x}{b}}{\|x\|}=\kappa\|x\|^{p-1}\longrightarrow+\infty
\quad\text{as }\|x\|\to\infty.
\]
\end{proposition}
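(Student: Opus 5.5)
The plan is to follow the pattern of Proposition~\ref{cor:scalar-partners}: show that $(0,0)$ is a missing polar point of $\mathcal A+B_\psi$, using the lower bound $\|x\|>d$ on $\dom\mathcal A$ from Theorem~\ref{thm:endpoint-criterion}.

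\textbf{Maximality of the partner.} Set $\phi=\psi(\|\cdot\|)$. Since $\psi$ is convex and nondecreasing and the norm is convex, $\phi$ is a finite continuous convex function on $X$. Hence $B_\psi=\partial\phi$ is maximally monotone with full domain, by the subdifferential result cited in Proposition~\ref{cor:nonsmooth-penalties}. Because $\dom B_\psi=X$, the condition Eq.~(\ref{eq:original-cq}) reduces to $\dom\mathcal A\neq\varnothing$, which Theorem~\ref{thm:endpoint-criterion} provides.

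\textbf{Radial identity.} Next we compute $\pair{x}{b}$ for $b\in\partial\phi(x)$ with $x\ne0$. Take the one-sided directional derivative of $\phi$ at $x$ in the directions $\pm x$. The map $s\mapsto\phi(sx)=\psi(s\|x\|)$ is differentiable at $s=1$ with derivative $\psi'(\|x\|)\|x\|$. The subgradient inequality then gives
\[
\psi'(\|x\|)\|x\|\le\pair{x}{b}\le\psi'(\|x\|)\|x\|,
\]
so $\pair{x}{b}=\psi'(\|x\|)\|x\|$. This is the step that needs the most care, namely one-sided derivatives of $s\mapsto\phi(sx)$ near $s=1$. It is nevertheless routine because $\psi\in C^1$.

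\textbf{The missing polar point.} Let $x=La\in\dom\mathcal A$ and put $t=r(a)$. The proof of Theorem~\ref{thm:endpoint-criterion} gives $|t|=|\pair{x}{g}|\le\|x\|\|g\|$, so $\|x\|\ge|t|d>d$. Since $\psi$ is convex, $\psi'$ is nondecreasing. Therefore
\[
\pair{x}{b}\ge\psi'(d)\|x\|\ge\frac{S}{d}\,|t|d=S|t|.
\]
Combining this with $\pair{La}{a}=t^2-1-\|\omega(|t|-1)\|^2\ge t^2-1-S$ yields
\[
\pair{x}{a+b}\ge t^2-1+S(|t|-1)>0.
\]
So $(0,0)$ lies in $(\gra(\mathcal A+B_\psi))^\mu$. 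Since $0\notin\dom\mathcal A$, the point $(0,0)$ is not in the graph, and the sum is nonmaximal.

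\textbf{The power case.} Take $\psi(s)=\frac{\kappa}{p}s^p$, which is $C^1$, convex and nondecreasing on $[0,\infty)$ for $p\ge1$. Here $\psi'(d)=\kappa d^{p-1}$, so the requirement $\psi'(d)\ge S/d$ becomes $\kappa\ge Sd^{-p}=S\|g\|^p$, exactly the condition in Eq.~(\ref{eq:cor-power-partners}). For $p=1$, $\psi'$ is the constant $\kappa$, so $\psi\in C^1$ still holds. The radial identity gives $\pair{x}{b}/\|x\|=\kappa\|x\|^{p-1}$, which tends to $+\infty$ as $\|x\|\to\infty$ when $p>1$.
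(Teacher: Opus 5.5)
Your proof is correct and follows essentially the same route as the paper. You get maximality and full domain from the finite continuous convex potential, the radial identity $\pair{x}{b}=\|x\|\psi'(\|x\|)$ by testing the subgradient inequality along the ray through $x$, and positivity of $\pair{x}{a+b}$ on $\gra\mathcal A$ from $\|x\|>d$ and the monotonicity of $\psi'$. Your sharper bound $\|x\|\ge|t|d$, which yields $t^2-1+S(|t|-1)>0$, and your use of general subdifferentiability of continuous convex functions in place of the paper's explicit subgradient $\psi'(\|x\|)j$ are only cosmetic differences.
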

\begin{proof}
The function $x\mapsto\psi(\|x\|)$ is finite, continuous and convex, so its subdifferential is maximally monotone \cite{rockafellar1970subdiff}. At $x\ne0$, the Hahn--Banach theorem supplies $j\in X^*$ with $\|j\|=1$ and $\pair{x}{j}=\|x\|$. The supporting inequalities for $\psi$ and the norm show that $\psi'(\|x\|)j\in B_\psi(x)$. At zero, $0\in B_\psi(0)$ because $\psi$ is nondecreasing. Thus $\dom B_\psi=X$.

For any $b\in B_\psi(x)$ with $x\ne0$, testing its subgradient inequality at $(1+t)x$ for both signs of $t$ and letting $t\to0$ yields
\[
\pair{x}{b}=\|x\|\psi'(\|x\|).
\]
If $(x,a)\in\gra\mathcal A$, then $|\pair{x}{g}|>1$, so $\|x\|>d$. Eq. (\ref{eq:abs-pairing}) gives $\pair{x}{a}>-S$. Since $\psi'$ is nondecreasing,
\[
\pair{x}{a+b}>-S+\|x\|\psi'(\|x\|)
\ge-S+\|x\|\frac{S}{d}>0.
\]
Consequently $(0,0)$ belongs to the monotone polar of the sum and not to its graph. Full domain proves Eq. (\ref{eq:original-cq}). Substituting $\psi(r)=\frac{\kappa}{p}r^p$ gives Eq. (\ref{eq:cor-power-partners}) and the stated coercivity formula.
\end{proof}

In Proposition~\ref{cor:nonsmooth-penalties}, one may take $f(x)=\rho\|x\|$ or the maximum of finitely many affine functions. These give nonsmooth, generally multivalued second operators. We also construct partners from powers of a continuous seminorm that controls $|\pair{x}{g}|$, which supplies the lower pairing bound needed for a nonmaximal sum with the same first operator.

\begin{proposition}[Seminorm penalties]
\label{cor:seminorm-partners}
Let $\mathcal A,g,S$ be as in Theorem~\ref{thm:endpoint-criterion}, with Eq. (\ref{eq:abs-endpoint-exclusion}) satisfied. Let $q$ be a continuous seminorm with $|\pair{x}{g}|\le c q(x)$ for some $c>0$. For $p\ge1$ and $\kappa\ge Sc^p$, the operator
\[
B=\partial\left(\frac\kappa p q(\cdot)^p\right)
\]
is maximally monotone with full domain, and $(0,0)$ is a missing polar point of $\mathcal A+B$.
\end{proposition}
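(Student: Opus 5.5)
The plan follows the proof of Proposition~\ref{cor:radial-partners}, with the norm replaced by the seminorm $q$. Put $\phi=\frac\kappa p q(\cdot)^p$. First I check that $\phi$ is finite, convex and continuous. Convexity holds because $q$ is a nonnegative convex function and $r\mapsto r^p$ is convex and nondecreasing on $[0,\infty)$. Continuity is part of the hypothesis on $q$. Then $B=\partial\phi$ is maximally monotone \cite{rockafellar1970subdiff}, and, as in the proof of Proposition~\ref{cor:nonsmooth-penalties}, a finite continuous convex function has a nonempty subdifferential at every point, so $\dom B=X$. Consequently Eq.~(\ref{eq:original-cq}) reduces to $\dom\mathcal A\ne\varnothing$, which Theorem~\ref{thm:endpoint-criterion} supplies.

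The key step is a lower bound for $\pair{x}{b}$ when $b\in B(x)$. Since $\phi$ is positively homogeneous of degree $p$, we have $\phi((1+t)x)=(1+t)^p\phi(x)$ for $t>-1$. The subgradient inequality then gives
\[
t\pair{x}{b}\le\bigl((1+t)^p-1\bigr)\phi(x)\qquad(t>-1).
\]
Dividing by $t>0$ and by $t<0$ and letting $t\to0$ yields the Euler identity $\pair{x}{b}=p\,\phi(x)=\kappa q(x)^p$. This needs no differentiability of $q$: only the one-dimensional function $t\mapsto(1+t)^p\phi(x)$ is differentiated, and it is smooth near $t=0$. For $p=1$ the same limit gives $\pair{x}{b}=\kappa q(x)$.

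Now take $(x,a)\in\gra\mathcal A$, so $x=La$ with $a\in D$ and $t=r(a)$. By Eq.~(\ref{eq:abs-pairing}), $\pair{x}{g}=t$, hence $|\pair{x}{g}|>1$. Also $\pair{x}{a}=t^2-1-\|\omega(|t|-1)\|^2>-S$. From $|\pair{x}{g}|\le c\,q(x)$ we get $q(x)>1/c$. Therefore, for every $b\in B(x)$,
\[
\pair{x}{a+b}>-S+\kappa c^{-p}\ge0,
\]
where the last step uses $\kappa\ge Sc^p$. Thus $\pair{0-x}{0-(a+b)}>0$ for every point of $\gra(\mathcal A+B)$, so $(0,0)$ lies in the monotone polar of the sum. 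Because $0\notin\dom\mathcal A$ by Theorem~\ref{thm:endpoint-criterion}(\ref{thm:sum-part}), the point $(0,0)$ is not in the sum graph, and $\mathcal A+B$ is not maximally monotone.

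The only step I expect to need care is the Euler identity. In particular, at points where $q(x)=0$ it must give $\pair{x}{b}=0$, and the argument above does. Such points never occur in $\dom\mathcal A$ anyway, since there $q(x)>1/c>0$.
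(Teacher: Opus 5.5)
Your proposal is correct and follows the paper's proof essentially step for step. Both arguments use the same ingredients: a finite continuous convex potential, whose subdifferential is full-domain and maximally monotone; the Euler identity $\pair{x}{b}=\kappa q(x)^p$, obtained by testing the subgradient inequality at $(1+t)x$; and the bounds $q(x)>1/c$ and $\pair{x}{a}>-S$ on $\gra\mathcal A$, together with $0\notin\dom\mathcal A$.
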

\begin{proof}
The potential is finite continuous convex, so its subdifferential is full-domain and maximally monotone. Testing the subgradient inequality for $b\in Bx$ at $(1+t)x$ and letting $t\to0$ from both sides gives $\pair{x}{b}=\kappa q(x)^p$. On $\gra\mathcal A$, $q(x)>1/c$ and $\pair{x}{a}>-S$. Hence $\pair{x}{a+b}>-S+\kappa q(x)^p>0$. The first operator has no value at zero, and full domain of the second gives the original qualification.
\end{proof}

This includes norms of finitely many linear observations that control $g$, as well as weighted anisotropic norms. In particular, $q(x)=|\pair{x}{g}|$ and $p=1$ give $B(x)=\kappa\operatorname{Sign}(\pair{x}{g})g$, where $\operatorname{Sign}(t)=\{\operatorname{sgn}t\}$ for $t\ne0$ and $\operatorname{Sign}(0)=[-1,1]$. A degenerate seminorm does not give coercivity in the ambient norm.

On standard $\ell^1$, we pair a rescaling of $T$ from Example~\ref{thm:L} with a fixed coordinatewise operator $C_*$. Its convex potential gives a single-valued, strictly monotone partner with bounded range whose sum with the rescaled operator is nonmaximal.

\begin{proposition}[A fixed coordinatewise partner on $\ell^1$]
\label{cor:fixed-coordinate-partner}
Let $T$ be the operator in Example~\ref{thm:L}, put $\widetilde T(x)=\frac12T(x/2)$, and index the coordinates of $\ell^1$ by $n\ge0$. Define
\[
w_n=2^{-n-2},\qquad (C_*x)_n=\frac{2x_n}{\sqrt{x_n^2+w_n^2}}.
\]
Then $C_*:\ell^1\to\ell^\infty$ is single-valued, strictly monotone and maximally monotone, with full domain and $\|C_*x\|_\infty\le2$. The pair $(\widetilde T,C_*)$ satisfies the interior-domain condition and has a nonmaximal sum.
\end{proposition}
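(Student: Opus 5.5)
The plan is to realize $C_*$ as the derivative of a finite continuous convex potential, obtain maximality of $\widetilde T$ by rescaling, and use the origin as the missing polar point of the sum.

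\emph{The potential.} Put
\[
\Phi(x)=\sum_{n\ge0}2\bigl(\sqrt{x_n^2+w_n^2}-w_n\bigr).
\]
Each summand is convex, vanishes at $0$, and is $2$-Lipschitz. Hence $0\le\Phi(x)\le2\|x\|_1$, and $\Phi$ is finite, convex and $2$-Lipschitz on $\ell^1$.

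\emph{$C_*$ is the derivative of $\Phi$.} Take a direction $v\in\ell^1$. The scalar difference quotients of the $n$th summand are bounded by $2|v_n|$, which is summable. Dominated convergence therefore gives the Gateaux derivative
\[
\pair{v}{C_*x}=\sum_n\frac{2x_nv_n}{\sqrt{x_n^2+w_n^2}}.
\]
Since $|(C_*x)_n|\le2$, we have $C_*x\in\ell^\infty$ with $\|C_*x\|_\infty\le2$. A continuous convex function that is Gateaux differentiable has subdifferential equal to its derivative, so $\partial\Phi=C_*$ is single-valued with full domain. It is maximally monotone by \cite{rockafellar1970subdiff}.

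\emph{Strict monotonicity.} Each map $s\mapsto 2s/\sqrt{s^2+w_n^2}$ is strictly increasing. If $x\ne y$, the sum $\sum_n(x_n-y_n)\bigl((C_*x)_n-(C_*y)_n\bigr)$ has all terms nonnegative and at least one term positive.

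\emph{Maximality of $\widetilde T$.} The map $x\mapsto x/2$ is an isomorphism of $\ell^1$. Theorem~\ref{lem:surjection-transport}(i) applied to it, followed by multiplication of the dual values by $\frac12$, preserves maximal monotonicity. Indeed, $\gra\widetilde T=\{(2u,\tfrac12c):(u,c)\in\gra T\}$, and the bijection $(u,c)\mapsto(2u,\tfrac12c)$ preserves pairings of differences, so monotone extensions correspond. The interior-domain condition holds because $\dom C_*=\ell^1$ and $\dom\widetilde T\ne\varnothing$.

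\emph{Bounds on $\gra\widetilde T$.} By Example~\ref{thm:L}(3), every $(x,b)\in\gra\widetilde T$ satisfies
\[
\|x\|_1=2\|u\|_1>1,\qquad \pair{x}{b}=\pair{u}{c}>-\sigma,
\]
and $0\notin\dom\widetilde T$.

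\emph{The key estimate.} The main step is the lower bound
\[
\pair{x}{C_*x}>1\quad\text{whenever }\|x\|_1>1.
\]
It rests on the scalar inequality
\[
\frac{s^2}{\sqrt{s^2+w^2}}\ge|s|-w\qquad(w>0).
\]
This is trivial when $|s|<w$. When $|s|\ge w$, use $\sqrt{s^2+w^2}\le|s|+w$, which gives $(|s|-w)\sqrt{s^2+w^2}\le s^2-w^2\le s^2$. Summing over $n$ and using $\sum_{n\ge0}w_n=\tfrac12$ gives
\[
\pair{x}{C_*x}\ge2\|x\|_1-2\sum_nw_n=2\|x\|_1-1>1.
\]

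\emph{Nonmaximality of the sum.} For every $(x,b)\in\gra\widetilde T$, the bounds above give
\[
\pair{x}{b+C_*x}>1-\sigma>0.
\]
Hence $(0,0)$ lies in $\bigl(\gra(\widetilde T+C_*)\bigr)^\mu$. It is not in that graph because $0\notin\dom\widetilde T$, so the sum is not maximally monotone.

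The rescaling by $\frac12$ is needed only to push the domain norm above $1$, so that $2\|x\|_1-1$ beats $\sigma$. I expect the only delicate points to be the dominated-convergence justification of differentiability and the scalar inequality.
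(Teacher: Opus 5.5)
Your proposal is correct and follows essentially the same route as the paper. The paper also uses the potential $2\sum_n(\sqrt{x_n^2+w_n^2}-w_n)$ with subdifferential maximality, the isomorphism $x\mapsto x/2$ for $\widetilde T$, and the scalar bound $s^2/\sqrt{s^2+w^2}\ge|s|-w$ with $\sum_n w_n=\frac12$, which gives $\pair{x}{a+C_*x}>1-\sigma>0$ at the origin witness.

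The only real variation is that you get single-valuedness of the subdifferential from Gateaux differentiability via dominated convergence, whereas the paper tests subgradients along coordinate directions. One small slip: pulling back by $x\mapsto x/2$ already yields $\frac12T(x/2)$, because the adjoint is multiplication by $\frac12$, so no further rescaling of the dual values is needed. Your direct bijection argument covers this correctly anyway.
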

\begin{proof}
The potential $\Psi(x)=2\sum_n(\sqrt{x_n^2+w_n^2}-w_n)$ is finite, convex and $2$-Lipschitz. Summing its coordinatewise supporting inequalities gives $C_*x\in\partial\Psi(x)$. Testing any subgradient along each positive and negative coordinate direction gives the reverse inclusion $\partial\Psi(x)=\{C_*x\}$. Subdifferential maximality therefore proves maximality, and strict increase of every coordinate function proves strict monotonicity.

The isomorphism $x\mapsto \frac{x}{2}$ in Theorem~\ref{lem:surjection-transport} gives maximality of $\widetilde T$. The bounds in Example~\ref{thm:L} yield $\|x\|_1>1$ and $\pair{x}{a}>-\sigma$ on its graph. Since $\sum_nw_n=\frac12$ and $r^2/\sqrt{r^2+w^2}>|r|-w$ for $w>0$,
\[
\pair{x}{C_*x}>2\|x\|_1-1>1\qquad(x\in\dom\widetilde T).
\]
Thus $\pair{x}{a+C_*x}>1-\sigma>0$, while $\widetilde T(0)=\varnothing$. This proves the missing origin witness, and $C_*$ has full domain.
\end{proof}

Unlike the smooth partners above, $C_*$ is not norm-continuous as a map from $\ell^1$ to $\ell^\infty$: for $x^{(n)}=w_ne_n\to0$, $\|C_*x^{(n)}\|_\infty=\sqrt2$. Its convex potential proves maximality.

For a maximally monotone operator whose domain excludes zero, we use a finite radial bound to construct a full-domain norm-subdifferential partner with bounded range and a nonmaximal sum. This extends the choice $p=1$ in Proposition~\ref{cor:radial-partners} beyond the construction theorem.

\begin{remark}[A second operator with bounded range]
\label{rem:bounded-range-partner}
Let $\mathcal T:X\rightrightarrows X^*$ be maximally monotone with nonempty domain, $0\notin\dom\mathcal T$ and $\mathcal V(\gra\mathcal T)\le C<\infty$. For $\rho\ge C$, define
\[
J_\rho(x)=
\{j\in X^*:\|j\|\le\rho,\ \pair{x}{j}=\rho\|x\|\}.
\]
This is the convex subdifferential of $x\mapsto\rho\|x\|$, so it is maximally monotone \cite{rockafellar1970subdiff}. Let $\overline B_{X^*}=\{j\in X^*:\|j\|\le1\}$. The Hahn--Banach theorem gives $\dom J_\rho=X$, and $J_\rho(0)=\rho\overline B_{X^*}$ gives $\operatorname{ran}J_\rho=\rho\overline B_{X^*}$. For every $(x,a)\in\gra\mathcal T$ and $j\in J_\rho(x)$,
\[
\pair{x}{a+j}\ge(\rho-C)\|x\|\ge0.
\]
Therefore, $(0,0)$ is monotonically related to $\gra(\mathcal T+J_\rho)$ and lies outside it. The sum is not maximally monotone, and the interior-domain condition holds because $J_\rho$ has full domain. For the first operators in Examples~\ref{thm:C} and~\ref{thm:L}, one may take any $\rho\ge2\sigma$.
\end{remark}

\section{Further consequences of pullback}
\label{app:pullback}

The pullback theorem and the extension correspondence yield constrained examples, prescribed affine factors and consequences for specific classes of Banach spaces. We also compute a dual quotient norm for the specified surjection onto $c_0$.

\subsection{Convex constraints and affine factors}
\label{subsec:appendix-affine-transfer}

First, we transfer counterexamples with normal-cone or convex-subdifferential partners through a bounded linear surjection. The subdifferential identity and the corresponding interior identity preserve the form of the second operator and the interior-domain condition. Then, we adjoin independent coordinates to prescribe affine factors.

\begin{corollary}[Convex constraints under surjective pullback]
\label{cor:pullback-constraints}
Let $Q:U\to X$ be a bounded linear surjection of real Banach spaces, and let $f:X\to\R\cup\{+\infty\}$ be proper, lower semicontinuous and convex. Then
\begin{equation}
\partial(f\circ Q)(u)=Q^*\partial f(Qu).
\label{eq:pullback-subgradient}
\end{equation}
For a nonempty closed convex set $C\subset X$, this gives
\[
N_{Q^{-1}C}=Q^*N_CQ,
\qquad
\operatorname{int}(Q^{-1}C)=Q^{-1}(\operatorname{int}C).
\]
Consequently, the closed-ball counterexamples yield second operators $N_{\{u:\|Qu\|\le r\}}$, and the radial counterexamples yield second operators $\partial(\frac\kappa p\|Q\cdot\|^p)$, with the corresponding pulled-back first operators.
\end{corollary}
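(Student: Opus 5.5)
The central step is the chain-rule identity $\partial(f\circ Q)(u)=Q^*\partial f(Qu)$. The normal-cone and interior identities follow from it almost directly, and the final consequences then come from Theorem~\ref{lem:surjection-transport}.

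\emph{The subgradient identity.} The inclusion $Q^*\partial f(Qu)\subseteq\partial(f\circ Q)(u)$ is immediate from the adjoint identity. If $a\in\partial f(Qu)$, then for every $w\in U$,
\[
f(Qw)\ge f(Qu)+\pair{Qw-Qu}{a}=f(Qu)+\pair{w-u}{Q^*a}.
\]
For the reverse inclusion, take $u^*\in\partial(f\circ Q)(u)$.
\begin{enumerate}
\item Testing at $w=u+tk$ with $k\in\ker Q$ and $t\in\R$ leaves the value $f(Q(u+tk))=f(Qu)$ unchanged. Letting $t$ have both signs forces $u^*$ to annihilate $\ker Q$. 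Here $u\in\dom\partial(f\circ Q)$, so $f(Qu)<\infty$.
\item The factorization argument of Theorem~\ref{lem:surjection-transport}, using the open-mapping preimage bound, then gives a unique $p\in X^*$ with $u^*=Q^*p$.
\item For any $y\in X$, choose a preimage $w$ with $Qw=y$. The subgradient inequality reads $f(y)\ge f(Qu)+\pair{y-Qu}{p}$, so $p\in\partial f(Qu)$.
\end{enumerate}
No qualification condition is needed, because surjectivity replaces the usual interior hypothesis in the chain rule. I expect the main obstacle to be stating this reverse inclusion cleanly for extended-valued $f$, but the kernel test handles it.

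\emph{Normal cones and interiors.} Apply the identity with $f=\iota_C$, noting that $\iota_C\circ Q=\iota_{Q^{-1}C}$. This gives $N_{Q^{-1}C}=Q^*N_CQ$, and $Q^{-1}C$ is closed and convex. For the interiors, $Q^{-1}(\operatorname{int}C)$ is open by continuity of $Q$, so $Q^{-1}(\operatorname{int}C)\subseteq\operatorname{int}(Q^{-1}C)$. Conversely, the open mapping theorem says $Q$ maps open sets to open sets. Hence if $u$ is interior to $Q^{-1}C$, then $Q$ of a neighbourhood of $u$ is an open set inside $C$ containing $Qu$, so $Qu\in\operatorname{int}C$.

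\emph{Consequences.} For the closed-ball case, take $C=r\overline B_Y$ from Proposition~\ref{cor:normal-cone}. Then $Q^{-1}C=\{u:\|Qu\|\le r\}$, so $Q^*B_rQ=N_{\{u:\|Qu\|\le r\}}$. For the radial case, take $f=\frac\kappa p\|\cdot\|^p$ from Proposition~\ref{cor:radial-partners}, which gives $Q^*\partial fQ=\partial(\frac\kappa p\|Q\cdot\|^p)$.

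Theorem~\ref{lem:surjection-transport}(ii), applied to these base pairs, then gives maximality of both pulled-back operators and nonmaximality of their sum. It also gives the interior-domain condition. That condition can alternatively be checked through the interior identity just proved, since $Q^{-1}(\dom A\cap\operatorname{int}C)$ is nonempty by surjectivity.
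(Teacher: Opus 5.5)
Your proposal is correct and follows essentially the same route as the paper. The kernel test along $u+tk$ forces the subgradient to annihilate $\ker Q$, and the factorization from Theorem~\ref{lem:surjection-transport} writes it as $Q^*p$. Testing lifts then gives $p\in\partial f(Qu)$. Finally, $f=\iota_C$, continuity together with openness of $Q$, and Theorem~\ref{lem:surjection-transport} give the normal-cone, interior and transfer claims, exactly as in the paper's argument.
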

\begin{proof}
When $f(Qu)$ is finite, testing a subgradient of $f\circ Q$ along $u+tk$, $k\in\ker Q$, shows that it annihilates $\ker Q$. Write it as $Q^*a$. Testing lifts of every $x\in X$ gives exactly the subgradient inequality for $a\in\partial f(Qu)$. The converse follows by composition. If $f(Qu)=+\infty$, both sides are empty. Taking $f=\iota_C$ proves the normal-cone identity. Continuity gives one inclusion for interiors, and openness of the surjection gives the other. Theorem~\ref{lem:surjection-transport} and the adjoint pairing identity now transfer maximality, the domain condition and the missing polar points of the stated examples.
\end{proof}

For example, on $U=X\oplus Z$ choose a bounded linear map $L_0:Z\to X$ and put $Q(x,z)=x+L_0z$. The resulting first operator has values $\{(a,L_0^*a):a\in A(x+L_0z)\}$, and the ball constraint becomes $\|x+L_0z\|\le r$. If $\ker Q\ne\{0\}$, this constraint is an unbounded cylinder and $\|Q\cdot\|$ is a seminorm. Pure pullback therefore does not preserve coercivity or strict monotonicity along the kernel.

Quotient maps can also prescribe entire affine fibres, and adjoining independent maximal graphs gives further counterexample families. The finite products below are equipped with the sum norm.

\begin{proposition}[Prescribed affine fibres and auxiliary factors]
\label{cor:product-fibres}
Let $(A,B)$ be one of the constructed counterexample pairs on $X$ with $\dom B=X$.
\begin{enumerate}
\item For arbitrary real Banach spaces $K_0,Z$, define on $X\oplus K_0\oplus Z$
\[
\widehat A(x,k,z)=
\begin{cases}A(x)\times\{0\}\times Z^*,&z=0,\\\varnothing,&z\ne0,\end{cases}
\qquad
\widehat B(x,k,z)=B(x)\times\{0\}\times\{0\}.
\]
This is a counterexample pair. A base missing polar point $(x_0,p_0)$ gives every missing point $((x_0,k,0),(p_0,0,z^*))$, $k\in K_0$, $z^*\in Z^*$.
\item Let $C,D:Z\rightrightarrows Z^*$ be maximally monotone with $\dom C\cap\operatorname{int}\dom D\ne\varnothing$. Then $(A\times C,B\times D)$ is a counterexample pair on $X\oplus Z$. In particular, $C$ can be any maximally monotone operator and $D=0$.
\end{enumerate}
\end{proposition}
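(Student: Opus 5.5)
For part (1), I will first check that $\widehat A$ and $\widehat B$ are maximally monotone. The clean way is to write each graph as a product of maximally monotone graphs on the summands, using the pairing $\pair{(x,k,z)}{(a,k^*,z^*)}=\pair{x}{a}+\pair{k}{k^*}+\pair{z}{z^*}$ on the sum-norm product, whose dual is $X^*\times K_0^*\times Z^*$ with the max norm.

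The factors are as follows. The graph of $\widehat A$ is $\gra A\times(K_0\times\{0\})\times(\{0\}\times Z^*)$. The second factor is the zero operator on $K_0$, and the third is the normal cone $N_{\{0\}}$ on $Z$. The graph of $\widehat B$ is $\gra B\times(K_0\times\{0\})\times(Z\times\{0\})$.

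A product of maximally monotone graphs is maximally monotone. To see this, let $((x,k,z),(p,k^*,z^*))$ lie in the polar of the product graph. Fix points in two of the factors and test against graph points that vary only in the remaining factor. The cross terms then cancel, so each component pair lies in the polar of its own factor, and hence in that factor's graph.

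I will verify directly that $0$ on $K_0$, $N_{\{0\}}$ on $Z$, and $0$ on $Z$ are maximal, by the same perturbation argument used for $\mathcal P$ in Theorem~\ref{thm:endpoint-criterion}. The interior-domain condition is immediate. Indeed, $\dom\widehat A=\dom A\times K_0\times\{0\}$ and $\dom\widehat B=X\times K_0\times Z$, so the domains meet the full interior.

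Next I turn to the sum. Its graph is the product of $\gra(A+B)$, $K_0\times\{0\}$ and $\{0\}\times Z^*$. Take a missing polar point $(x_0,p_0)$ of $A+B$ and test the candidate $((x_0,k,0),(p_0,0,z^*))$ against a sum-graph point $((x,k',0),(a+b,0,w^*))$. The pairing of differences splits as
\[
\pair{x_0-x}{p_0-a-b}+\pair{k-k'}{0}+\pair{0}{z^*-w^*},
\]
and the last two terms vanish. The first term is nonnegative because $(x_0,p_0)$ lies in the polar of $\gra(A+B)$, so the candidate is in the polar of the sum graph.

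The candidate is not in the sum graph, since $(x_0,p_0)\notin\gra(A+B)$. This shows that all the displayed points are missing polar points. I read the statement "gives every missing point" as asserting exactly this family, not that the polar of the sum contains nothing else.

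For part (2), the same product lemma gives maximality of $A\times C$ and $B\times D$. For the interior-domain condition, take $x$ in $\dom A$ (which lies in $\operatorname{int}\dom B=X$) and $z$ in $\dom C\cap\operatorname{int}\dom D$. Then $(x,z)$ lies in $\dom(A\times C)$, and it lies in $\operatorname{int}(X\times\dom D)=X\times\operatorname{int}\dom D$.

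Since $\dom(C+D)=\dom C\cap\dom D\ne\varnothing$, I can fix a point $(z_1,c_1)\in\gra(C+D)$. If $(x_0,p_0)$ is a missing polar point for $A+B$, then $((x_0,z_1),(p_0,c_1))$ is a missing polar point for the product sum. The pairing splits into the $X$ part, which is nonnegative, and the $Z$ part, which is nonnegative by monotonicity of $C+D$. The point is outside the graph because its $X$-component is outside $\gra(A+B)$.

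The special case $D=0$ satisfies the hypothesis for any maximally monotone $C$, since $\dom C\ne\varnothing$ and $\operatorname{int}\dom 0=Z$.

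The only step needing care is the product maximality lemma, in particular that the product pairing and dual identification are the standard ones for the sum norm. I expect no real obstacle beyond this.
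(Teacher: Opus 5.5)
Your witness computations and interior-domain checks are correct and agree with the paper. The gap is the product-maximality lemma, which part (2) needs for $A\times C$ and $B\times D$ with arbitrary maximally monotone $C,D$; your sketch does not establish it. The product pairing has no cross terms, so fixing graph points in the other factors only adds a constant. If $((x,z),(p,q))\in(\gra A\times\gra C)^\mu$ and you fix $(z',c')\in\gra C$, you only obtain
\[
\pair{x-x'}{p-a'}\ge-\pair{z-z'}{q-c'}\qquad((x',a')\in\gra A).
\]
For an arbitrary choice of $(z',c')$, nothing makes $\pair{z-z'}{q-c'}\le0$, so $(x,p)\in(\gra A)^\mu$ does not follow. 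Your argument never uses maximality of the \emph{other} factor, and without it the claimed step is false. In $\R\oplus\R$, take the product $\{(s,s):s\in\R\}\times\{(0,0)\}$ and the point with primal $(0,1)$ and dual $(1,1)$. It pairs with every product-graph point $((s,0),(s,0))$ to give $s^2-s+1>0$, yet $(0,1)$ is not on the maximal graph $\{(s,s)\}$.

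The missing idea is to choose the fixed test point using maximality of that factor. If $(z,q)\in\gra C$, take $(z',c')=(z,q)$, which contributes $0$. Otherwise maximality of $C$ gives $(z',c')\in\gra C$ with $\pair{z-z'}{q-c'}<0$. Then argue by contradiction: if $(x,p)\notin\gra A$, combine a strictly negative test for $\gra A$ with this nonpositive test for $\gra C$. The total pairing is negative, contradicting membership in the polar, and the second component is handled symmetrically. This is exactly the paper's proof of part (2).

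In part (1), your product route can be repaired more cheaply, because the auxiliary factors $K_0\times\{0\}$, $\{0\}\times Z^*$ and $Z\times\{0\}$ are linear subspaces. Let $k'\in K_0$ vary freely, together with the free auxiliary coordinate ($w^*\in Z^*$ for $\widehat A$, $z'\in Z$ for $\widehat B$). Whatever the additive constant, this forces the corresponding components of the candidate to vanish. Only the polar inequality for $\gra A$ or $\gra B$ then remains. That is the paper's argument, phrased there as pullback by the projection followed by extension from the closed subspace $X\oplus K_0\oplus\{0\}$. The sum-norm dual identification you flag as the delicate point is routine.
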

\begin{proof}
\noindent\textit{(1)} Pull $A$ back to $X\oplus K_0$ by projection, then extend its graph to the closed subspace $X\oplus K_0\oplus\{0\}$ by allowing every dual extension, as in Proposition~\ref{cor:space-transfer}. Equivalently, freely varying the last dual coordinate forces the last primal coordinate of any polar point to vanish, after which maximality of the pulled-back graph gives membership. The second operator is the pullback of $B$ by projection and has full domain. Pairing each displayed witness against a point of the sum graph gives the corresponding base pairing, and membership would imply $p_0\in(A+B)x_0$.

\noindent\textit{(2)} For any maximally monotone graph $M$ and any point $(z,p)$, there is a graph point with pairing difference at most zero: use $(z,p)$ itself when it belongs to $M$, and a strictly negative test otherwise. A point outside the product of two maximal graphs therefore has a strictly negative test in one factor and a nonpositive test in the other. This proves maximality of both product operators. The interior of a finite product is the product of its interiors, so the domain condition holds. Choose $z\in\dom C\cap\dom D$, $c\in Cz$ and $d\in Dz$. If $(x_0,p_0)$ is a base missing polar point, then $((x_0,z),(p_0,c+d))$ is monotonically related to the product sum, by adding the two monotone inequalities, and cannot belong to it because its first component would give $p_0\in(A+B)x_0$.
\end{proof}

The first part prescribes independent primal and dual affine factors. The second part allows a prescribed monotone operator in the added coordinates. Taking $C=N_{D_0}$ for a nonempty closed convex set $D_0$ and taking the second auxiliary operator to be zero also prescribes a closed convex transverse domain. Finite iteration is allowed by the same proof.

\subsection{Changes of coordinates and classes of spaces}
\label{subsec:appendix-space-classes}

We show that the specified quotient $\ell^1\to c_0$ has no bounded linear right inverse. We also show that counterexample pairs persist under equivalent renorming, affine changes of coordinates, positive scaling and bounded skew additions, and give concrete spaces covered by Proposition~\ref{cor:space-transfer}.

\begin{remark}[Nonsplit quotients and changes of coordinates]
\label{rem:pullback-geometric-instances}
The surjection $Q:\ell^1\to c_0$ used in Example~\ref{thm:L} has no bounded linear right inverse. Indeed, such an inverse would embed $c_0$ into $\ell^1$. To see the obstruction directly, suppose $T:c_0\to\ell^1$ is bounded below and put $y_n=Te_n$, so $\|y_n\|_1\ge\delta>0$. Each coordinate of $y_n$ tends to zero. Select a subsequence and disjoint successive finite coordinate blocks so that each selected vector has total mass below $\delta/4$ outside its block. A single bounded sequence agreeing with its signs on these blocks then pairs with every selected vector by at least $\delta/2$. This contradicts the fact that its composition with $T$ belongs to $(c_0)^*=\ell^1$. Thus $\ker Q$ is uncomplemented, since a complement would give a bounded right inverse. By contrast, a surjection onto $\ell^1$ always has a bounded linear right inverse: choose uniformly bounded lifts $u_n$ of $e_n$ and use the absolutely convergent series $\sum a_nu_n$.

Equivalent norms leave these counterexample graphs unchanged, since they preserve the continuous dual and norm interiors. More generally, for a Banach-space isomorphism $L_0:U\to X$, $h\in X$, $a_0,b_0\in U^*$ and $c>0$, the pair
\[
\widetilde A(u)=cL_0^*A(L_0u+h)+a_0,
\qquad
\widetilde B(u)=cL_0^*B(L_0u+h)+b_0
\]
is again a counterexample. Pairings of differences are multiplied by $c$, and $(z,p)$ is transported to $(L_0^{-1}(z-h),cL_0^*p+a_0+b_0)$.

There is also a symmetry changing either first or second operators: if $S_A,S_B:X\to X^*$ are bounded linear maps with $\pair{x}{S_Ax}=\pair{x}{S_Bx}=0$, then $(A+S_A,B+S_B)$ is a counterexample with the same domains. The graph map $(x,a)\mapsto(x,a+Sx)$ preserves every pairing of differences and sends $(z,p)$ to $(z,p+Sz)$. Apply it to the two factors and to their sum. Taking $S_B=-S_A$ leaves the sum itself unchanged.
\end{remark}

We apply the preceding change of coordinates to prescribe the rank-one direction, which supplies the fixed-direction assertion in Corollary~\ref{cor:class-reflexivity}.

\begin{remark}[Prescribing the rank-one direction]
\label{rem:prescribed-direction}
On every space covered by Proposition~\ref{cor:space-transfer}, the nonzero direction of the second operator can be prescribed in advance. More precisely, for every $g\in X^*\setminus\{0\}$ there is one maximally monotone $A_g$ such that $A_g+\lambda P_g$ is nonmaximal for every $\lambda>0$, where $P_gx=\pair{x}{g}g$.

Choose $A_0,g_0$ supplied by that proposition. Since two proper linear hyperplanes cannot cover $X$, there is $e\in X$ with $g_0(e)g(e)\ne0$. Define the bounded linear isomorphism
\[
S_0x=x+\frac{g(x)-g_0(x)}{g_0(e)}e,\qquad
S_0^{-1}x=x-\frac{g(x)-g_0(x)}{g(e)}e.
\]
Direct substitution gives $S_0^*g_0=g$. With $A_g=S_0^*A_0S_0$, Theorem~\ref{lem:surjection-transport} gives maximality and transfers nonmaximality through
\[
A_g+\lambda P_g=S_0^*(A_0+\lambda P_{g_0})S_0.
\]
The choices of $S_0$ and $A_g$ are independent of $\lambda$.
\end{remark}

For concrete ambient spaces, Proposition~\ref{cor:space-transfer} gives such pairs on $\ell^\infty$, $C[0,1]$, $L^1[0,1]$, the real compact-operator space $\mathcal K(\ell^2_{\R})$, and the real trace-class space $\mathcal S_1(\ell^2_{\R})$. The first contains $c_0$ directly. Disjoint continuous functions of norm one embed $c_0$ into $C[0,1]$ by uniformly convergent sums. Normalized indicators of disjoint sets of positive measure embed $\ell^1$ isometrically into $L^1[0,1]$. Diagonal operators embed $c_0$ into $\mathcal K(\ell^2_{\R})$ in the operator norm and $\ell^1$ into $\mathcal S_1(\ell^2_{\R})$ in the trace norm. Each conclusion concerns maximality in the ambient space and its full continuous dual.

\subsection{Dual quotient norms and a fixed detecting vector}
\label{subsec:appendix-dual-quotient}

For the operator $T$ on standard $\ell^1$, we construct graph points with fixed primal norm and pairing whose dual quotient norms in $\ell^\infty/c_0$ tend to infinity. A single vector detects this divergence through pairings tending to $-\infty$, showing that the first two quantities do not control the dual values.

\begin{corollary}[Unbounded dual quotient norms at fixed primal norm and pairing]
\label{cor:dual-quotient-recovered}
For the operator $T$ in Example~\ref{thm:L}, there are $(u_N,p_N)\in\gra T$ and a fixed $d\in\ell^1$ such that
\[
\|u_N\|_1=8,\qquad \pair{u_N}{p_N}=3,\qquad
\dist_\infty(p_N,c_0)=\|p_N\|_\infty=5+2N,
\]
and, with $f=Q^*g$,
\[
\|d\|_1=\frac{\pi^2}{6},\qquad \pair{d}{f}=0,\qquad
\pair{d}{p_N}=-2\sum_{r\ge1}\frac{\min\{N,r\}}{r^2}\longrightarrow-\infty.
\]
The fibre of $(\gra T)^\mu$ over $d$ is empty. In particular, no finite-valued function of $\|u\|_1$ and $\pair{u}{p}$ bounds $\dist_\infty(p,c_0)$ on $\gra T$.
\end{corollary}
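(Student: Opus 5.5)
The plan is to build everything from the explicit label $a^2=-2e_{0,1}+3e_{0,3}$ at parameter $t=2$, plus finitely supported perturbations in $K$ placed in the first positive block. At $t=2$ the curve value is $\omega_1=0$, because $1-n^{1/4}\le0$ for all $n\ge1$. Hence every $a\in a^2+K$ lies in $D$ and, by Eq. (\ref{eq:C2}), satisfies $\pair{La}{a}=4-1-0=3$.

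\textbf{Step 1: the labels $a_N$.} Put
\[
k_N=\sum_{j=1}^{N}(e_{1,2j-1}-e_{1,2j})\in K,\qquad a_N=a^2+k_N .
\]
Then $\|a_N\|_1=5+2N$, since $a^2$ and $k_N$ have disjoint supports. A direct computation with the triangular formula (\ref{eq:D4}) gives $(Ek_N)_{1,j}=-1$ for $j\le 2N$ and $0$ otherwise, so
\[
Lk_N=\sum_{j\le 2N}e_{1,j},\qquad La_N=-6e_{0,1}-8e_{0,2}-3e_{0,3}+\sum_{j\le2N}e_{1,j}.
\]
This vector has sup-norm $8$, and $La_N/8$ is a finitely supported rational vector in the unit ball. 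So it equals some $q_{n_N}$, and we take $u_N=8e_{n_N}$ and $p_N=Q^*a_N$. This gives $(u_N,p_N)\in\gra T$, $\|u_N\|_1=8$, and $\pair{u_N}{p_N}=\pair{La_N}{a_N}=3$.

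\textbf{Step 2: the norm of $p_N$ and its distance to $c_0$.} Eq. (\ref{eq:L3}) gives $\|p_N\|_\infty=\|a_N\|_1=5+2N$. For $\dist_\infty(p_N,c_0)$, the bound $\le\|p_N\|_\infty$ is trivial. For the reverse inequality I would use that the enumeration contains infinitely many finitely supported rational vectors of norm at most one arbitrarily close to the sign vector of $a_N$. These occur at indices tending to infinity, so $\limsup_n|(Q^*a_N)_n|=\|a_N\|_1$, which forces the distance to equal the norm.

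\textbf{Step 3: the detecting vector $d$.} For $r\ge1$ let
\[
y_r=-\sum_{j=1}^{r}(e_{1,2j-1}-e_{1,2j}),
\]
which has norm one and occurs as some $q_{m_r}$. Set $d=\sum_r r^{-2}e_{m_r}$. Then $\|d\|_1=\sum_r r^{-2}=\pi^2/6$ and $Qd=\sum_r r^{-2}y_r$. Since $y_r$ lives in block $1$, we get $\pair{d}{f}=\pair{Qd}{g}=0$ and $\pair{y_r}{a^2}=0$. The overlap count gives $\pair{y_r}{k_N}=-2\min\{N,r\}$. Summing,
\[
\pair{d}{p_N}=\pair{Qd}{a_N}=-2\sum_r\frac{\min\{N,r\}}{r^2}\longrightarrow-\infty,
\]
because the sum tends to the divergent harmonic series. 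The signs here are the main thing to double-check: the orientation of $y_r$ against $k_N$ must produce the stated negative value.

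\textbf{Step 4: the empty fibre and the last assertion.} Since $T$ is maximally monotone (Example~\ref{thm:L}), $(\gra T)^\mu=\gra T$. Every $u\in\dom T$ satisfies $|\pair{Qu}{g}|=|r(a)|>1$, whereas $\pair{Qd}{g}=0$. Hence $d\notin\dom T$, and the fibre of $(\gra T)^\mu$ over $d$ is empty. The final claim follows because $(\|u_N\|_1,\pair{u_N}{p_N})=(8,3)$ is constant while $\dist_\infty(p_N,c_0)=5+2N$ is unbounded.

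I expect the main obstacle to be Step 2, the limsup argument. The delicate points are the density of near-sign vectors along arbitrarily late indices of the enumeration, and keeping their pairings with $a_N$ arbitrarily close to $\|a_N\|_1$.
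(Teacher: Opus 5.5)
Your proposal is correct and follows essentially the same route as the paper: the base label $a^2$ perturbed by an element of $K$ of $\ell^1$-mass $2N$, the lift $u_N=8e_{n_N}$, the $\limsup$ computation of $\dist_\infty(Q^*a,c_0)$, a detecting vector $d$ with weights $r^{-2}$ on enumerated partial perturbations, and the empty fibre via $|\pair{\cdot}{g}|>1$ on $\dom T$. The only difference is cosmetic: you stack the alternating pairs $e_{1,2j-1}-e_{1,2j}$ inside block $1$, whereas the paper uses one pair $e_{b,1}-e_{b,2}$ in each block $b\le N$. Your sign and pairing computations check out, and the indices $m_r$ are distinct because the $y_r$ have different supports, which gives $\|d\|_1=\pi^2/6$.
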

\begin{proof}
For the enumeration $(q_j)$ defining $Q$, every $a\in\ell^1(I)$ satisfies
\[
\dist_\infty(Q^*a,c_0)=\limsup_j|\pair{q_j}{a}|=\|a\|_1.
\]
The upper bound follows from $\|q_j\|_\infty\le1$. A finitely supported rational vector in the unit ball pairs arbitrarily close to $\|a\|_1$ in absolute value. Distinct rational scalar multiples approaching that vector supply infinitely many such tests, so deleting any finite prefix leaves the same supremum. Finally, the distance of a bounded sequence to $c_0$ is the limsup of the absolute coordinates, by truncation and the triangle inequality.

Take $a_N=-2e_{0,1}+3e_{0,3}+\sum_{b=1}^N(e_{b,1}-e_{b,2})$. Then $r(a_N)=2$, $ma_N=F(2)$ and
\[
x_N=La_N=-6e_{0,1}-8e_{0,2}-3e_{0,3}+\sum_{b=1}^N(e_{b,1}+e_{b,2}).
\]
Thus $\|x_N\|_\infty=8$, $\pair{x_N}{a_N}=3$ and $\|a_N\|_1=5+2N$. Choose $i_N$ with $q_{i_N}=x_N/8$ and set $u_N=8e_{i_N}$, $p_N=Q^*a_N$. These are graph points of $T$, and the adjoint identity gives all three asserted equalities.

For $v_r=\sum_{b=1}^r(e_{b,1}-e_{b,2})$, choose its index $j_r$ in the enumeration and set $d=-\sum_{r\ge1}r^{-2}e_{j_r}$. These indices are distinct, and $\pair{v_r}{g}=0$, $\pair{v_r}{a_N}=2\min\{N,r\}$. The asserted norm and pairings follow. The negative pairing has magnitude at least $2\sum_{r=1}^N1/r$, which tends to infinity. Since $\pair{d}{f}=0$ but every $u\in\dom T$ satisfies $|\pair{u}{f}|>1$, maximality of $T$ makes its polar fibre over $d$ empty. The final assertion follows by evaluating the proposed bound at the fixed arguments $(8,3)$.
\end{proof}

\section{Fitzpatrick functions, radial bounds and graph averages}
\label{app:fitzpatrick}

First, we prove the pullback formula for Fitzpatrick functions and their dual decompositions. Next, we show that the specified quotient preserves the exact radial bound in Corollary~\ref{cor:family-domain-geometry}. The remaining results distinguish radial bounds from Fitzpatrick values and domain membership, and give a finite averaging certificate for incompatible polar points of the sum.

\subsection{Fitzpatrick functions under surjective pullback}
\label{subsec:appendix-fitzpatrick-transfer}

Corollary~\ref{cor:fitzpatrick-values} computes the exact values for the constructed pair. The following formula transfers these values and the corresponding dual minimizers through every bounded linear surjection.

\begin{corollary}[Fitzpatrick functions under surjective pullback]
\label{cor:fitzpatrick-pullback}
Let $Q:U\to X$ be a bounded linear surjection of real Banach spaces. For an operator $T:X\rightrightarrows X^*$ with nonempty graph, put $T_Q=Q^*TQ$. Then
\begin{equation}
\mathsf F_{T_Q}(u,u^*)=
\begin{cases}
\mathsf F_T(Qu,p),&u^*=Q^*p,\ p\in X^*,\\
+\infty,&u^*\notin\operatorname{ran}Q^*.
\end{cases}
\label{eq:cor-fitzpatrick-pullback}
\end{equation}
For two operators $A,B$ with nonempty graphs, define
\[
\mathcal I_{A,B}(x,p)=\inf_{a\in X^*}\{\mathsf F_A(x,p-a)+\mathsf F_B(x,a)\}.
\]
Then
\[
\mathcal I_{A_Q,B_Q}(u,Q^*p)=\mathcal I_{A,B}(Qu,p).
\]
If this infimum is finite, its minimizers correspond through the injective map $a\mapsto Q^*a$. In particular, for the operators in Corollary~\ref{cor:fitzpatrick-values}, every $u\in\ker Q$ and $\lambda>0$ satisfy
\[
\mathsf F_{\mathcal A_Q+\lambda\mathcal P_Q}(u,0)=S-\lambda,
\qquad
\min_{u^*\in U^*}\{\mathsf F_{\mathcal A_Q}(u,-u^*)+\mathsf F_{\lambda\mathcal P_Q}(u,u^*)\}=S,
\]
with the minimum attained only at $u^*=0$.
\end{corollary}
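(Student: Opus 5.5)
The plan is to prove Eq. (\ref{eq:cor-fitzpatrick-pullback}) first, then derive the decomposition identity from it, and finally specialize to Corollary~\ref{cor:fitzpatrick-values}.

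\emph{The pullback formula.} Write the graph of $T_Q$ as $\{(v,Q^*a):(Qv,a)\in\gra T\}$. If $u^*=Q^*p$, the adjoint identity turns each term of the supremum into
\[
\pair{u}{Q^*a}+\pair{v}{Q^*p}-\pair{v}{Q^*a}=\pair{Qu}{a}+\pair{Qv}{p}-\pair{Qv}{a}.
\]
Since $Q$ is surjective, $Qv$ ranges over every $x$ with $(x,a)\in\gra T$. Hence the supremum is exactly $\mathsf F_T(Qu,p)$.

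If $u^*\notin\operatorname{ran}Q^*$, I will first show that $u^*$ does not annihilate $\ker Q$. This uses the factorization argument from the proof of Theorem~\ref{lem:surjection-transport}: a functional vanishing on $\ker Q$ equals $Q^*p$ for the $p$ defined there. One point needs checking. That argument used the preimage bound $C_Q$, which is not a hypothesis here, but the open mapping theorem supplies it for any bounded linear surjection. So there is $k\in\ker Q$ with $\pair{k}{u^*}\ne0$. Fix one graph point $(v,Q^*a)$, which exists because the graph is nonempty. Replacing $v$ by $v+tk$ keeps it in the graph and changes the expression by $t\pair{k}{u^*}$, since $\pair{k}{Q^*a}=0$. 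Letting $t\to\pm\infty$ makes the supremum $+\infty$.

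\emph{The decomposition identity.} In $\mathcal I_{A_Q,B_Q}(u,Q^*p)$, any $u^*\notin\operatorname{ran}Q^*$ makes $\mathsf F_{B_Q}(u,u^*)=+\infty$. Any such $u^*$ also forces $Q^*p-u^*\notin\operatorname{ran}Q^*$, so those terms contribute $+\infty$. The infimum therefore reduces to $u^*=Q^*a$. By the formula above and injectivity of $Q^*$, each such term equals $\mathsf F_A(Qu,p-a)+\mathsf F_B(Qu,a)$. The two infima are thus the same, term by term, over the bijection $a\mapsto Q^*a$ onto $\operatorname{ran}Q^*$. Consequently minimizers correspond when the value is finite. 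Note that when $\mathsf F_{B_Q}$ is finite but $\mathsf F_{A_Q}$ is infinite, the term is still $+\infty$, so no extra care is needed with $\infty$ arithmetic.

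\emph{The specialization.} For $u\in\ker Q$ we have $Qu=0$, and $\mathcal A_Q+\lambda\mathcal P_Q=Q^*(\mathcal A+\lambda\mathcal P)Q$ by linearity of $Q^*$. Then Eq. (\ref{eq:cor-fitzpatrick-pullback}) with $p=0$ and Eq. (\ref{eq:cor-sum-fitzpatrick}) give the value $S-\lambda$. For the minimum, apply the decomposition identity with $p=0$. Setting $a=-\tilde p$, the objective $\mathsf F_{\mathcal A}(0,\tilde p)+\mathsf F_{\lambda\mathcal P}(0,-\tilde p)$ is the objective of Eq. (\ref{eq:cor-split-minimum}) reparametrized by $\tilde p\mapsto-\tilde p$. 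Its minimum is $S$, attained only at $0$, so the minimum over $U^*$ is $S$, attained only at $u^*=Q^*0=0$.

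The main obstacle is the $+\infty$ case of the pullback formula, which relies on characterizing $\operatorname{ran}Q^*$ as the annihilator of $\ker Q$. This is the step that invokes the open mapping theorem; the rest is bookkeeping.
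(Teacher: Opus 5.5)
Your proposal is correct and follows the paper's proof essentially step for step. Translation along $\ker Q$ together with $(\ker Q)^\perp=\operatorname{ran}Q^*$ (via the open mapping theorem) gives the pullback formula, injectivity of $Q^*$ transfers the dual decompositions and their minimizers, and $Qu=0$ with $\mathcal A_Q+\lambda\mathcal P_Q=Q^*(\mathcal A+\lambda\mathcal P)Q$ gives the specialization; your substitution $a=-\tilde p$ is harmless but unnecessary, since $\mathcal I_{\mathcal A,\lambda\mathcal P}(0,0)$ is literally the objective in Eq.~(\ref{eq:cor-split-minimum}).
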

\begin{proof}
Every primal point of $\gra T_Q$ can be translated by any element of $\ker Q$. Therefore, the supremum defining $\mathsf F_{T_Q}(u,u^*)$ is infinite unless $u^*$ annihilates $\ker Q$. Surjectivity and the open mapping theorem give $(\ker Q)^\perp=\operatorname{ran}Q^*$. If $u^*=Q^*p$, the adjoint identity and surjectivity reduce the defining supremum exactly to $\mathsf F_T(Qu,p)$, proving Eq. (\ref{eq:cor-fitzpatrick-pullback}).

In the dual decomposition of $Q^*p$, any summand outside $\operatorname{ran}Q^*$ gives an infinite value. The remaining summands are exactly $Q^*a$, with $a\in X^*$ unique, so Eq. (\ref{eq:cor-fitzpatrick-pullback}) proves the infimum identity and the assertion about finite minimizers. Apply the identity to $A=\mathcal A$, $B=\lambda\mathcal P$ and $Qu=0$. The equality $\mathcal A_Q+\lambda\mathcal P_Q=Q^*(\mathcal A+\lambda\mathcal P)Q$ proves the sum formula as well.
\end{proof}

\subsection{Exact radial bounds under the specified quotient}
\label{subsec:appendix-exact-radial}

The exact constant in Corollary~\ref{cor:family-domain-geometry} survives the specified quotient onto $c_0(I)$. This determines the precise strength of a norm-subdifferential partner for which the origin is a missing polar point.

\begin{remark}[Exact radial bounds and the norm partner]
\label{rem:sharp-radial-transfer}
For the quotient $Q$ in Eq. (\ref{eq:D10}),
\begin{equation}
\inf_{Qu=x}\|u\|_1=\|x\|_\infty\qquad(x\in c_0(I)).
\label{eq:metric-quotient-norm}
\end{equation}
Indeed, replace $\frac12$ in the residual construction of Lemma~\ref{lem:specified-quotient} by any $0<\delta<1$. Density of the same dictionary gives an actual preimage with $\|u\|_1\le\frac{\|x\|_\infty}{1-\delta}$. The contraction bound gives the reverse inequality, and letting $\delta\downarrow0$ proves Eq. (\ref{eq:metric-quotient-norm}).

Let $A=A_{w,B,\omega}$ and $T=Q^*AQ$. The pairing identity and Eq. (\ref{eq:metric-quotient-norm}) give
\[
\mathcal V(\gra T)=\mathcal V(\gra A)=2S.
\]
For the upper bound, use $\|Qu\|_\infty\le\|u\|_1$ on every lifted graph point. For the reverse bound, fix a graph point with negative pairing and choose preimages with norms approaching $\|x\|_\infty$ in Eq. (\ref{eq:metric-quotient-norm}), then take the supremum over these graph points. In particular, the two operators in Examples~\ref{thm:C} and~\ref{thm:L} have exact radial constant $2\sigma=\frac{\pi^2}{48}$.

For $M=A$ or $T$ and $J_\rho=\partial(\rho\|\cdot\|)$, $\rho\ge0$, every $j\in J_\rho(x)$ satisfies $\pair{x}{j}=\rho\|x\|$. Since $0\notin\dom M$, it follows that
\[
(0,0)\in\bigl(\gra(M+J_\rho)\bigr)^\mu\setminus\gra(M+J_\rho)
\quad\Longleftrightarrow\quad \rho\ge2S.
\]
This is a threshold for the specified origin witness, not a characterization of maximality of the sum when $\rho<2S$. Finally, the earlier domain and Fitzpatrick formulas give
\[
\mathsf F_A(0,0)=S,\qquad
\dist(0,\dom A)=\frac12,\qquad
\mathcal V(\gra A)=\frac{\mathsf F_A(0,0)}{\dist(0,\dom A)}.
\]
The last equality follows from the fixed-fibre approximation in Corollary~\ref{cor:family-domain-geometry}.
\end{remark}

\subsection{Prescribed radial bounds}
\label{subsec:appendix-radial}

Next, we add a scalar coordinate and translate its dual value to construct maximally monotone operators on standard $\ell^1$ with any prescribed positive radial bound and an infinite Fitzpatrick value at the origin. The same bound is the exact threshold for a norm-subdifferential partner to make the origin a missing polar point of the sum.

\begin{proposition}[Prescribed radial bound and infinite Fitzpatrick value]
\label{cor:radial-infinite-fitzpatrick}
For every $\theta>0$, there is a maximally monotone operator $T_\theta$ on standard $\ell^1$ such that
\[
0\notin\dom T_\theta,\qquad
\mathcal V(\gra T_\theta)=\theta,\qquad
\mathsf F_{T_\theta}(0,0)=+\infty.
\]
For $J_\rho=\partial(\rho\|\cdot\|_1)$, $\rho\ge0$, the point $(0,0)$ is monotonically related to $\gra(T_\theta+J_\rho)$ if and only if $\rho\ge\theta$. In that case it is outside the graph and the pair is a counterexample satisfying Eq. (\ref{eq:original-cq}).
\end{proposition}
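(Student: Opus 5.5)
The plan is to realize $T_\theta$ as a product of a rescaled copy of the operator $T$ from Example~\ref{thm:L} with a constant operator on one extra scalar coordinate. The scalar factor forces $\mathsf F_{T_\theta}(0,0)=+\infty$ and pins down the radial bound. Then I identify $\ell^1\oplus_1\R$ isometrically with standard $\ell^1$.

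\emph{Construction.} Fix $\theta>0$ and choose $\alpha>0$ with $2\sigma\alpha\le\theta$. Let $T$ be the operator of Example~\ref{thm:L}. Then $\alpha T$ is maximally monotone, since positive scaling of the dual variable preserves the polar inequalities. By Example~\ref{thm:L}(\ref{thm:L-bound}), it satisfies $0\notin\dom(\alpha T)$ and $\pair{u}{b}\ge-2\sigma\alpha\|u\|_1\ge-\theta\|u\|_1$ on its graph. On $Z\oplus_1\R$, with norm $\|u\|_1+|s|$, define
\[
M(u,s)=\alpha T(u)\times\{\theta\}.
\]
The constant map $s\mapsto\theta$ is continuous and monotone on $\R$, hence maximally monotone, and it has full domain. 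The product argument in Proposition~\ref{cor:product-fibres}(2) then gives maximality of $M$: a point outside the product has a strictly negative test in one factor and a nonpositive test in the other. Finally, the shift $(u,s)\mapsto(s,u_1,u_2,\dots)$ is an isometric isomorphism onto standard $\ell^1$. Transporting $M$ through it by the isomorphism case of Theorem~\ref{lem:surjection-transport} gives $T_\theta$. This transport preserves maximality, pairings, norms, $\mathcal V$ and Fitzpatrick values, so all remaining computations can be done for $M$.

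\emph{Computations.} The three claims follow from the product form.
\begin{itemize}
\item Every domain point has $u\in\dom T$, so $u\ne0$; hence $0\notin\dom M$.
\item For graph points $((u,s),(b,\theta))$ we get
\[
\mathsf F_M(0,0)=\sup\{-\pair{u}{b}-\theta s\}=+\infty,
\]
by letting $s\to-\infty$ with $u$ fixed.
\item For the radial bound, write $-\pair{u}{b}-\theta s\le\theta\|u\|_1+\theta|s|$. This gives $\mathcal V(\gra M)\le\theta$. Conversely, fix $(u,b)$ and let $s\to-\infty$. The ratio
\[
\frac{-\pair{u}{b}+\theta|s|}{\|u\|_1+|s|}
\]
tends to $\theta$, so $\mathcal V(\gra M)=\theta$. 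Only the upper radial bound of Example~\ref{thm:L} is needed, not its exactness.
\end{itemize}

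\emph{Threshold for $J_\rho$.} By the computation in Remark~\ref{rem:bounded-range-partner}, every $j\in J_\rho(x)$ satisfies $\pair{x}{j}=\rho\|x\|$. So $(0,0)\in(\gra(T_\theta+J_\rho))^\mu$ if and only if $\pair{x}{a}+\rho\|x\|\ge0$ for all $(x,a)\in\gra T_\theta$. By Definition~\ref{def:radial-bound}, this holds exactly when $\rho\ge\mathcal V(\gra T_\theta)=\theta$. If $\rho<\theta$, a graph point whose ratio exceeds $\rho$ violates the inequality. When $\rho\ge\theta$, the point $(0,0)$ lies outside the sum graph because $0\notin\dom T_\theta$. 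Also $J_\rho$ is a convex subdifferential with full domain, so it is maximally monotone and Eq.~(\ref{eq:original-cq}) holds. This yields the counterexample.

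The only step needing care is the maximality of the product operator together with its transfer to standard $\ell^1$. Both follow from results already in the paper, so I do not expect a real obstacle.
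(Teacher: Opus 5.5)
Your proof is correct and is essentially the paper's argument. The paper also places $\alpha T$ on $\ell^1\oplus_1\R$ with a constant scalar dual coordinate; it uses $-\theta$ and lets $s\to+\infty$, where you use $+\theta$ and let $s\to-\infty$. It bounds $\mathcal V$ via $\pair{x}{a}\ge-2\sigma\|x\|_1$ and identifies the space isometrically with $\ell^1$, exactly as you do. The only difference is how maximality is checked: the paper pulls $\alpha T$ back along the projection and translates the dual values, while you apply the product argument of Proposition~\ref{cor:product-fibres}(2) with the constant operator on $\R$, which amounts to the same thing.
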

\begin{proof}
Let $T$ be the operator in Example~\ref{thm:L}, put $C=2\sigma$ and $\alpha=\frac{\theta}{1+C}$, and first work on $E=\ell^1\oplus_1\R$. Define
\[
T_\theta(x,s)=\{(\alpha a,-\theta):a\in T(x)\}.
\]
Positive scaling of the dual variable preserves maximality. Pulling $\alpha T$ back by the projection $E\to\ell^1$ and translating its dual values by $(0,-\theta)$ therefore proves maximality of $T_\theta$. Its domain excludes zero. Since $\pair{x}{a}\ge-C\|x\|_1$,
\[
\pair{(x,s)}{(\alpha a,-\theta)}
\ge-\alpha C\|x\|_1-\theta|s|
\ge-\theta(\|x\|_1+|s|).
\]
Thus $\mathcal V(\gra T_\theta)\le\theta$. For a fixed $(x,a)\in\gra T$, letting $s\to+\infty$ gives
\[
\frac{\max\{0,-\alpha\pair{x}{a}+\theta s\}}{\|x\|_1+s}\longrightarrow\theta,
\qquad
-\alpha\pair{x}{a}+\theta s\longrightarrow+\infty.
\]
These limits give the reverse radial bound and the infinite Fitzpatrick value. For every $j\in J_\rho(u)$, $\pair{u}{j}=\rho\|u\|$. Hence the origin polar condition is exactly the uniform lower bound with constant $\rho$, which holds if and only if $\rho\ge\mathcal V(\gra T_\theta)=\theta$. The second factor is full-domain and maximally monotone. Finally, inserting the scalar coordinate identifies $E$ isometrically with standard $\ell^1$ and preserves all these assertions by Theorem~\ref{lem:surjection-transport}.
\end{proof}

For the concrete operators in Examples~\ref{thm:C} and~\ref{thm:L}, finite radial control also coexists with an empty value at the origin, separating this control from distance to the operator value.

\begin{remark}[The radial bound and distance to an empty value]
\label{rem:radial-distance-recovered}
For a maximally monotone operator $M$ with $0\notin\dom M$, define
\[
L_M(z,p)=\max\left\{0,\sup_{\substack{(x,a)\in\gra M\\x\ne z}}\frac{\pair{x-z}{p-a}}{\|x-z\|}\right\}.
\]
Then $L_M(0,0)=\mathcal V(\gra M)$. By Remark~\ref{rem:sharp-radial-transfer}, the two concrete operators satisfy $L_M(0,0)=2\sigma<\infty$ although $M(0)=\varnothing$ and $\dist(0,M(0))=+\infty$. Thus finite radial control does not imply domain membership or equality with the distance to the operator value. More precisely, for $J_\rho=\partial(\rho\|\cdot\|)$, $\rho>0$,
\[
(0,0)\in\bigl(\gra(M+J_\rho)\bigr)^\mu
\quad\Longleftrightarrow\quad\mathcal V(\gra M)\le\rho,
\]
because $\pair{x}{j}=\rho\|x\|$ for every $j\in J_\rho(x)$. For the concrete operators, $\rho\ge2\sigma$ is exactly the condition for this origin witness and supplies a full-domain, bounded-range partner with a nonmaximal sum.
\end{remark}

\subsection{Finite graph averages and incompatible polar points}
\label{subsec:appendix-graph-averages}

For the existing sum $\mathcal A+\mathcal P$, we certify nonuniqueness of maximal extensions using the average of two points of $\gra\mathcal A$ and a point of $\gra\mathcal P$. The resulting incompatible polar points and their pairing bounds transfer under surjective pullback.

\begin{corollary}[An averaging certificate for incompatible polar points]
\label{cor:averaging-recovered}
Let $\mathcal A,\mathcal P$ be as in Proposition~\ref{cor:positive-perturbations}, and put $G_+=\gra(\mathcal A+\mathcal P)$. There are the mean $q$ of two points of $\gra\mathcal A$, a number $0<\theta<1$ and $\varepsilon>0$ such that, with $q'=\theta q$ and $q_0=(0,0)$,
\[
q_0\in G_+^\mu,\qquad [q',v]\ge\varepsilon\quad(v\in G_+),\qquad
[q_0,q']=-\varepsilon,
\]
where $[(x,a),(y,b)]=\pair{x-y}{a-b}$. These certificates are preserved by the surjective pullback in Theorem~\ref{lem:surjection-transport}.
\end{corollary}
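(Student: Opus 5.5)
Throughout, $\mathcal A,\mathcal P$ come from Theorem~\ref{thm:endpoint-criterion} with Eq.~(\ref{eq:abs-endpoint-exclusion}), and I take $\lambda=1$ in Proposition~\ref{cor:positive-perturbations}. The origin certificate $q_0\in G_+^\mu$ is already given by part~(\ref{thm:sum-part}) of that theorem: every $(La,a+r(a)g)\in G_+$ satisfies $\pair{La}{a+r(a)g}>1-S$, i.e.\ $[q_0,v]>1-S>0$.

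\emph{Choice of $q$.} Fix $s>0$ and take the two actual labels $a^{1+s},a^{-1-s}$ from (\ref{ass:realization}). Set
\[
q=\Bigl(Lp_s,\;p_s\Bigr),\qquad p_s=\tfrac12\bigl(a^{1+s}+a^{-1-s}\bigr),
\]
which is the mean of the two graph points $(La^{\pm(1+s)},a^{\pm(1+s)})$ of $\gra\mathcal A$. By linearity, $r(p_s)=0$ and $Mp_s=(0,\omega(s))$, exactly as in the proof of Proposition~\ref{cor:positive-perturbations}(1).

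\emph{Computing the pairings.} For $q'=\theta q$ and $v=(La,a+tg)$ with $t=r(a)$, I use Eq.~(\ref{eq:abs-pairing}) and $\pair{La}{g}=t$ to get
\[
[q',v]=2t^2-\|Ma-\theta Mp_s\|_H^2
=2t^2-1-\|\omega(|t|-1)-\theta\omega(s)\|_{H_0}^2 .
\]
The cross term involving $g$ vanishes because $\pair{Lp_s}{g}=r(p_s)=0$. Next,
\[
[q_0,q']=\pair{\theta Lp_s}{\theta p_s}=\theta^2\bigl(0-\|Mp_s\|^2\bigr)=-\theta^2\|\omega(s)\|^2 ,
\]
which is negative because $\omega(s)\ne0$ for small $s$ (since $\omega(s)\to\eta\ne0$).

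\emph{The lower bound, which is the main step.} I need $\inf_v[q',v]>0$ uniformly. Since $\|\omega(u)\|^2\le S<1$ and $\|\omega(s)\|^2\le S$,
\[
\|\omega(u)-\theta\omega(s)\|\le\sqrt S\,(1+\theta),
\]
so $[q',v]\ge 2-1-S(1+\theta)^2$, using $t^2>1$. For small $\theta$ this is positive, provided $S(1+\theta)^2<1$. Let $c=1-S(1+\theta)^2>0$. Then choose
\[
\varepsilon=\min\bigl\{c,\ \theta^2\|\omega(s)\|^2\bigr\},
\]
after rescaling $\theta$ so that the two quantities match. The statement asks for the same $\varepsilon$ in both places, with $[q_0,q']=-\varepsilon$ exactly. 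So I set $\varepsilon=\theta^2\|\omega(s)\|^2$ and shrink $\theta$ until $\theta^2\|\omega(s)\|^2\le 1-S(1+\theta)^2$. This is possible as $\theta\to0$, because the left side tends to $0$ and the right side tends to $1-S>0$.

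\emph{Consequence and transfer.} The conditions say that $q_0$ and $q'$ are both in $G_+^\mu$ (the latter since $[q',v]\ge\varepsilon>0$), while $[q_0,q']<0$. Hence no monotone extension contains both, which certifies two distinct maximal extensions. For the pullback, I lift $q=(Lp_s,p_s)$ to $(u,Q^*p_s)$ with $Qu=Lp_s$ and lift $q_0$ to $(0,0)$. By the adjoint identity, every bracket $[\cdot,\cdot]$ between lifted points equals the corresponding base bracket. By Eq.~(\ref{eq:pullback-sum}), every point of the pulled-back sum graph has the form $(w,Q^*c)$ with $(Qw,c)\in G_+$, so all three certificate relations transfer verbatim, exactly as in Theorem~\ref{lem:surjection-transport}(\ref{thm:pullback-counterexamples}).
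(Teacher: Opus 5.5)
Your proof is correct, but it reaches the two inequalities by a different route than the paper.

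\textbf{Your route.} You evaluate the bracket exactly from Eq.~(\ref{eq:abs-pairing}):
\[
[\theta q,v]=2t^2-1-\|\omega(|t|-1)-\theta\omega(s)\|_{H_0}^2>1-S(1+\theta)^2 ,
\]
using $t^2>1$ and $\|\omega\|_{H_0}^2\le S<1$. You then take $\varepsilon=\theta^2\|\omega(s)\|_{H_0}^2$ with $\theta$ small.

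\textbf{The paper's route.} The paper never computes $[q,v]$ exactly. It chooses $s$ so that $W=\|\omega(s)\|_{H_0}^2$ exceeds $E=(1+s)^2-1$. It obtains only the lossy bound $[q,z]\ge -E$ on $G_+$, by averaging the monotonicity inequalities of $\mathcal A$ at the two labels and adding the one for $\mathcal P$ at $(w,0)$. It then applies the interpolation identity $[\theta q,z]=(1-\theta)[q_0,z]+\theta[q,z]-\theta(1-\theta)[q_0,q]$ with the explicit choices $\theta=\frac{W-E}{2W}$ and $\varepsilon=\frac{(W-E)^2}{4W}$.

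\textbf{What each buys.} The paper's argument is a genuine finite-graph-average certificate. It uses only monotonicity against three graph points, the origin polar property and two pairing values. It therefore works for any $q$ with $[q,\cdot]\ge -E$ on $G_+$ and $[q_0,q]=-W<-E$, at the cost of the constraint $W>E$ on $s$. Your argument is shorter and works for every $s$ with $\omega(s)\ne0$, but it depends on the full quadratic structure of the construction. It is essentially the computation in Proposition~\ref{cor:positive-perturbations}(1), with a shrinking factor in place of a perturbation.

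\textbf{Minor points.}
\begin{itemize}
\item The $g$-term does not vanish: it contributes one of the two $t^2$'s. Your displayed formula correctly includes it, so only your wording is off.
\item The intermediate sentence about $\min\{c,\theta^2\|\omega(s)\|^2\}$ and ``rescaling $\theta$'' is superfluous. Your final choice already settles the matter.
\item For the pullback, you should note that any lift $u$ of $Lp_s$ is itself the mean of two lifted graph points: take $u^+$ any lift of $La^{1+s}$ and set $u^-=2u-u^+$. This keeps the lifted $q$ in the ``mean of two points of $\gra\mathcal A_Q$'' form the statement requires. Otherwise your adjoint-identity transfer of the three relations matches the paper's.
\end{itemize}
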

\begin{proof}
Choose $s>0$ small enough that $W=\|\omega(s)\|_{H_0}^2>E=(1+s)^2-1$, using $W\to S>0$. Choose actual labels $a^+,a^-$ at parameters $1+s,-1-s$, and put $v=(a^++a^-)/2$, $w=Lv$, $q=(w,v)$. The pairing identity gives
\[
r(v)=0,\qquad Mv=(0,\omega(s)),\qquad
\pair{w}{v}=-W,\qquad
\pair{La^\pm}{a^\pm}=E-W.
\]
In particular $(w,0)\in\gra\mathcal P$. Averaging the two monotonicity inequalities for $\mathcal A$ gives $\pair{w-x}{v-a}\ge-E$ for every $(x,a)\in\gra\mathcal A$. Adding the inequality for $\mathcal P$ at $(w,0)$ yields $[q,z]\ge-E$ for $z\in G_+$. The construction theorem gives $q_0\in G_+^\mu$, and $[q_0,q]=-W$. Set
\[
\theta=\frac{W-E}{2W},\qquad \varepsilon=\frac{(W-E)^2}{4W}.
\]
For each $z\in G_+$, the identity
\[
[\theta q,z]=(1-\theta)[q_0,z]+\theta[q,z]-\theta(1-\theta)[q_0,q]
\]
gives $[\theta q,z]\ge\theta((1-\theta)W-E)=\varepsilon$, whereas $[q_0,\theta q]=-\theta^2W=-\varepsilon$. Thus the two polar points cannot belong to the same monotone extension. Lifting the two actual primal points and applying the adjoint to their dual coordinates preserves the average, every pairing and the zero rank-one value at the averaged primal point.
\end{proof}

\begingroup
\raggedright \bibliographystyle{IEEEtran}
\bibliography{refer}
\endgroup

\end{document}